\title{Cyclic $A_{\infty}$-algebras and double Poisson algebras}
\author{David Fern\'andez and Estanislao Herscovich
}
\date{}
\colorlet{mycyan}{cyan!40!gray}
\colorlet{myblue}{blue!40!gray}
\colorlet{myred}{red!40!gray}
\colorlet{mygreen}{green!20!gray}
\definecolor{ultramarine}{RGB}{0,32,96}
\definecolor{light-gray}{gray}{0.8}
\colorlet{myultramarine}{ultramarine!20!gray}
\newtheorem{theorem}{Theorem}[section]
\newtheorem{proposition}[theorem]{Proposition}
\newtheorem{definition}[theorem]{Definition}
\newtheorem{lemma}[theorem]{Lemma}
\newtheorem{corollary}[theorem]{Corollary}
\newtheorem{remark}[theorem]{Remark}
\newtheorem{fact}[theorem]{Fact}
\newtheorem{convention}[theorem]{Convention}
\DeclareFontFamily{U}{BOONDOX-calo}{\skewchar\font=45 }
\DeclareFontShape{U}{BOONDOX-calo}{m}{n}{
  <-> s*[1.05] BOONDOX-r-calo}{}
\DeclareFontShape{U}{BOONDOX-calo}{b}{n}{
  <-> s*[1.05] BOONDOX-b-calo}{}
\DeclareMathAlphabet{\mathcalboondox}{U}{BOONDOX-calo}{m}{n}
\SetMathAlphabet{\mathcalboondox}{bold}{U}{BOONDOX-calo}{b}{n}
\DeclareMathAlphabet{\mathbcalboondox}{U}{BOONDOX-calo}{b}{n}
\newcommand{\lr}[1]{
  \{\mkern-6mu\{#1\}\mkern-6mu\}}
\numberwithin{equation}{section}
\newcommand{\gan}{\mathchoice
  {\hskip -0.5ex\raise-0.5ex\hbox{\includegraphics[height = 2.5ex, width = 2.5ex]{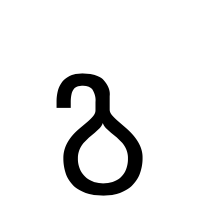}}\hskip -0.5ex}
  {\hskip -0.5ex\raise-0.5ex\hbox{\includegraphics[height = 2.5ex, width = 2.5ex]{gan.png}}\hskip -0.5ex}
  {\hskip -0.2ex\raise-0.2ex\hbox{\includegraphics[height = 1.5ex, width = 1.5ex]{gan.png}}\hskip -0.2ex}
  {\hskip -0.1ex\vcenter{\hbox{\includegraphics[height = 1ex, width = 1ex]{gan.png}}}\hskip -0.1ex}
}
\newcommand{\xan}{\mathchoice
  {\hskip -0.5ex\raise-0.5ex\hbox{\includegraphics[height = 2.5ex, width = 2.5ex]{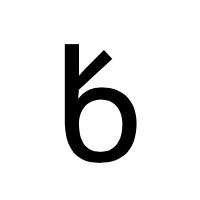}}\hskip -0.5ex}
  {\hskip -0.5ex\raise-0.5ex\hbox{\includegraphics[height = 2.5ex, width = 2.5ex]{xan.png}}\hskip -0.5ex}
  {\hskip -0.2ex\raise-0.2ex\hbox{\includegraphics[height = 1.5ex, width = 1.5ex]{xan.png}}\hskip -0.2ex}
  {\hskip -0.1ex\vcenter{\hbox{\includegraphics[height = 1ex, width = 1ex]{xan.png}}}\hskip -0.1ex}
}
\def\place{{-}}
\newcommand\ZZ{{\mathbb{Z}}}
\newcommand\NN{{\mathbb{N}}}
\newcommand{\kk}{{\Bbbk}} 
\newcommand{\D}{\operatorname{\mathbb{D}er}}
\newcommand{\Der}{\operatorname{Der}}
\def\place{{-}}
\begin{document}

\maketitle
                                                     
\hrulefill
\selectlanguage{english}
\begin{abstract} 
In this article we prove that there exists an explicit bijection between nice $d$-pre-Calabi-Yau algebras and $d$-double Poisson differential graded algebras, where $d \in \mathbb{Z}$, extending a result proved by N. Iyudu and M. Kontsevich. We also show that this correspondence is functorial in a quite satisfactory way, giving rise to a (partial) functor from the category of $d$-double Poisson dg algebras to the partial category of $d$-pre-Calabi-Yau algebras. 
Finally, we further generalize it to include double $P_{\infty}$-algebras, as introduced by T. Schedler. 
\end{abstract}

\textbf{Mathematics subject classification 2010:} 16E45, 14A22, 17B63, 18G55.

\textbf{Keywords:} double Poisson algebras, $A_{\infty}$-algebras, cyclic $A_{\infty}$-algebras, pre-Calabi-Yau algebras, double $P_{\infty}$-algebras.

\hrulefill
\section{Introduction}

Pre-Calabi-Yau algebras were introduced in \cite{KV18}, and further studied in \cite{I17} and \cite{IK17}.
However, these structures (or equivalent ones) have appeared in other works under different names, such as \emph{$V_{\infty}$-algebras} in \cite{TZ16}, \emph{$A_{\infty}$-algebras with boundary} in \cite{Sei12}, \emph{noncommutative divisors} in Remark 2.11 in \cite{Sei17}, or \emph{weak Calabi-Yau structures} (see \cite{Ko13} for the case of algebras, \cite{Ye18} for differential graded (dg) categories and \cite{KPS17} for linear $\infty$-categories).
These references show that pre-Calabi-Yau structures play an important role in homological algebra, symplectic geometry, string topology, noncommutative geometry and even in Topological Quantum Field Theory (see \cite{Ko13}). 
Following \cite{KoSo09}, a (compact) \emph{Calabi-Yau structure} (of dimension $n$) on a compact $A_{\infty}$-algebra $A$ is a nondegenerate cyclically invariant pairing on $A$ of degree $n$. 
In the sense of formal noncommutative geometry, it is the analogue of a symplectic structure.
The problem with this definition is that for applications related to path spaces, Fukaya categories, open Calabi-Yau manifolds or Fano manifolds, the hypothesis of compactness is too restrictive. 
This was the reason why pre-Calabi-Yau algebras were originally introduced in \cite{KV18}.

Roughly speaking, a pre-Calabi-Yau algebra can be regarded as a formal noncommutative Poisson structure on a non-compact algebra because it is a noncommutative analogue of a solution to the Maurer-Cartan equation for the Schouten bracket on polyvector fields. 
More precisely, let $A$ be a $\mathbb{Z}$-graded vector space, and let $C^{(k)}(A):=\prod_{r\geq 0}\mathcal{H}om(A[1]^{\otimes r},A^{\otimes k})$, for $k\geq 1$. 
A \emph{pre-Calabi-Yau structure} on $A$ is a solution $m=\sum_{k\geq 0}m^{(k)}$, $m^{(k)}\in C^{(k)}(A)$ of the Maurer-Cartan equation $[m,m]_{\text{gen.neckl}}=0$ (see \cite{IK17}, Def. 2.5). 
Here, $[\,,\,]_{\text{gen.neckl}}$ is the ``generalized necklace bracket", which is a kind of graded commutator 
(see \cite{IK17}, Def. 2.4). 
Nevertheless, for our purposes, we will use a different but equivalent version of this notion (see \cite{IK17}, Prop. 2.7).  
A pre-Calabi-Yau algebra essentially is a cyclic $A_{\infty}$-algebra structure on $A \oplus A^{\#}[d-1]$ for the natural bilinear form of degree $d-1$ induced by evaluation such that $A$ is an $A_{\infty}$-subalgebra (see Definition \ref{definition-pre-d-CY}). 

If pre-Calabi-Yau structures are regarded as noncommutative Poisson structures in the setting of formal noncommutative geometry, double Poisson algebras are the natural candidates for Poisson structures in the context of noncommutative differential geometry based on double derivations as developed in \cite{CBEG} and \cite{vdB}. 
Indeed, let $\D A =\Der(A,A\otimes A)$ be the $A$-bimodule of double derivations, and let $DA =T_A(\D A)$ be its tensor algebra. 
Roughly speaking, a double Poisson algebra is an algebra endowed with a bivector $P\in (DA)_2$ such that $\{P,P\}=0$, where $\{ \hskip 0.6mm , \}$ is a kind of commutator in this context (see \cite{vdB}, Section 4.4). 
Besides their similarity with the commutative notion, double Poisson algebras turn out to be the appropriate noncommutative Poisson algebras in this setting because they satisfy the Kontsevich-Rosenberg principle (see \cite{KoRo} and \cite{vdB}, Section 7.5), whereby a structure on an associative algebra has geometric meaning if it induces standard geometric structures
on its representation spaces.

Hence, since pre-Calabi-Yau algebras and double Poisson algebras can be regarded as noncommutative Poisson structures, one should expect some relationship between them. 
For instance, W.-K. Yeung \cite{Ye18} proved that double Poisson structures on dg categories provide examples of pre-Calabi-Yau structures. 
Furthermore, given an associative algebra $A$, N. Iyudu and M. Kontsevich showed that there exists an explicit one-to-one correspondence between the class of non-graded double Poisson algebras and that of 
pre-Calabi-Yau algebras whose multiplications $m_{i}$ vanish for $i \in \NN \setminus \{ 2,3 \}$, such that $m_{2}$ is the usual product of the square-zero extension $A \oplus A^{\#}[d-1]$, and $m_3$ sends $A\otimes A^{\#}\otimes A$ to $A$ 
and $A^{\#}\otimes A\otimes A^{\#}$ to $A^{\#}$ (see \cite{IK17}, Thm. 1.1). 

The first main result of this article is an extension of this correspondence to the differential graded setting (see Theorem \ref{theorem:main1}).
Our second main result shows that such a correspondence satisfies a simple functorial property (see Theorems \ref{theorem:main2} and \ref{theorem:main2bis}), for a suitable notion of \emph{morphism of $d$-pre-Calabi-Yau algebras} (Definition \ref{definition:morprecy}). 
We remark that this notion does not define a category but a \emph{partial category} of $d$-pre-Calabi-Yau algebras, 
since not all pairs $(f,g)$ of morphisms such that the codomain of $f$ is the domain of $g$ are composable. 

Moreover, T. Schedler \cite{Sch09} showed an interesting connection of the classical and associative Yang-Baxter equations with double Poisson algebras, that he generalized to $L_{\infty}$-algebras, giving rise to ``infinity'' versions of Yang-Baxter equations and double Poisson algebras.
The latter arise by relaxing the (double) Jacobi identity up to homotopies, but not the associativity of the multiplication. 
We recall Schedler's definition of \emph{double $P_{\infty}$-algebras} in Definition \ref{definition:poinf}, 
which coincides with the usual notion of dg double Poisson algebras if the higher brackets vanish. 
The third main result of the article states that there is also a correspondence between certain 
pre-CY structures on (nonunitary) graded algebras $A$ and double $P_{\infty}$-algebras, giving a different extension of Theorem \ref{theorem:main1} if $d=0$ (see Theorem \ref{theorem:main3}). 

We believe that our results can be a powerful tool to define both new double Poisson and pre-Calabi-Yau structures. 
For example, the study of linear and quadratic double Poisson brackets on free associative algebras, as in \cite{ORS13} or \cite{PVdW08}, might be useful to better understand and extend the results obtained by N. Iyudu in \cite{I17}, where pre-Calabi-Yau structures on path algebras of quivers with one vertex and a finite number of loops are studied.
Moreover, the results obtained in this article give rise to a more natural study of quasi-isomorphism classes of dg double Poisson algebras by considering the associated pre-Calabi-Yau $A_{\infty}$-algebras. 
We remark that the former problem is in principle specially difficult, as it is usually the case when dealing with double structures (\textit{e.g.} double associative algebras, double Poisson algebras), since, although transfer theorems for strongly homotopic structures over dioperads or properads are known to hold, they are not explicit. 
Indeed, as a major difference with the theory of (al)gebras over operads we can mention that there does not exist in general a Schur functor construction for dioperads/properads --so there is in particular no bar construction for (al)gebras over dioperads/properads--, the category of (al)gebras over dioperads/properads does not carry any natural model structure, etc.  

The contents of the article are as follows. 
We begin in Section \ref{section:preliminaries} by fixing our notations and conventions, 
and in Section \ref{section:Poisson} we review some known definitions and results related to double Poisson dg algebras. 
After reviewing the basic definitions and results on $A_{\infty}$-algebras in the first part of Section \ref{section:cyclic-pre-CY}, 
we recall the crucial notion of a $d$-pre-Calabi-Yau structure as well as some additional conditions on $A_{\infty}$-algebras that we will need to prove our main results. 

Section \ref{section:core} is the core of the article. 
Subsection \ref{subsection:main1} is devoted to prove the first main result of our article, Theorem \ref{theorem:main1}, that establishes the bijection between fully manageable nice $d$-pre-Calabi-Yau structures 
and double Poisson brackets of degree $-d$. 
In Subsection \ref{subsection:main2} we prove our second main result, namely the functoriality of the previous correspondence (see Theorems \ref{theorem:main2} and \ref{theorem:main2bis}). 
Finally, in Section \ref{section:P-infinity}, we prove our last main result, Theorem \ref{theorem:main3}, that extends the previous bijection in case $d=0$ to include double $P_{\infty}$-algebras.

\paragraph*{Acknowledgments.} The first author is supported by the Alexander von Humboldt Stiftung in the framework of an Alexander von Humboldt professorship endowed by the German Federal Ministry of Education and Research. 
The second author was supported by the GDRI ``Representation Theory'' 2016-2020 and the BIREP group, 
and is deeply thankful to Henning Krause and William Crawley-Boevey for their hospitality at the University of Bielefeld. 
We are very grateful to Yiannis Vlassopoulos for sharing with us the manuscript \cite{KV18}.

\section{Notations and conventions}
 \label{section:preliminaries}
\subsection{Generalities}

In what follows, $\kk$ will denote a field of characteristic zero. 
We recall that, if $V = \oplus_{n \in \ZZ} V^{n}$ is a (cohomological) graded vector space (resp., dg vector space with differential $\partial_{V}$), $V[m]$ is the graded (resp., dg) vector space over $\kk$ whose $n$-th homogeneous component $V[m]^{n}$ is given by $V^{n+m}$, for all $n, m \in \ZZ$ (resp., and whose differential $\partial_{V[m]}$ sends a homogeneous $v \in V^{n+m}$ to $(-1)^{m} \partial_{V}(v)$).
It is called the \emph{shift} of $V$. 
Given a nonzero element $v \in V^{n}$, we will denote $|v| = n$ the \emph{degree} of $v$. 
If we refer to the degree of an element, we will be implicitly assuming that it is nonzero and homogeneous. 

We recall that a \emph{morphism} $f : V \rightarrow W$ of graded (resp., dg) vector spaces of degree $d \in \ZZ$ is a homogeneous linear map of degree $d$, \textit{i.e.} $f(V^{n}) \subseteq W^{n+d}$ for all $n \in \ZZ$, (resp., satisfying that $f \circ \partial_{V} = (-1)^{d} \partial_{W} \circ f$). 
A morphism of degree zero will be called \emph{closed}. 
Moreover, if $f : V \rightarrow W$ is a morphism of graded (resp., dg) vector spaces of degree $d$, $f[m] : V[m] \rightarrow W[m]$ is the morphism of degree $d$ whose underlying set-theoretic map is $(-1)^{m d} f$. 
In this way, the shift $(\place)[m]$ defines an endofunctor on the category of graded (resp., dg) vector spaces provided 
with closed morphisms. 

Given any $d \in \ZZ$, we will denote by $s_{V}^{d} : V \rightarrow V[d]$ the \emph{suspension morphism}, whose underlying map is the identity of $V$, and $s_{V}^{1}$ will be denoted simply by $s_{V}$.
To simplify notation, we write $sv$ instead of $s_V(v)$ for a homogeneous $v\in V$.
All morphisms between vector spaces will be $\kk$-linear (satisfying further requirements if the spaces are further decorated). 
All unadorned tensor products $\otimes$ would be over $\kk$. 
Since graded vector spaces can be considered as dg vector spaces with trivial differentials, we will proceed to consider the case of dg vector spaces. 
We also remark that $\mathbb{N}$ will denote the set of positive integers, whereas $\mathbb{N}_{0}$ will be the set of 
nonnegative integers. 

\subsection{Permutations}

Given $n \in \NN$, we will denote by $\mathbb{S}_{n}$ the group of permutations of $n$ elements $\{ 1, \dots, n\}$, 
and given any $\sigma \in \mathbb{S}_{n}$, $\operatorname{sgn}(\sigma) \in \{ \pm 1\}$ will denote its sign. 
Given two dg vector spaces $V$ and $W$, we denote by $\tau_{V,W} : V \otimes W \rightarrow W \otimes V$ 
the closed morphism determined by $v \otimes w \mapsto (-1)^{|v| |w|} w \otimes v$, for all homogeneous elements $v \in V$ and $w \in W$. 
Moreover, given any transposition $\varsigma = (i j)$ with $i<j$ in the group of permutations $\mathbb{S}_{n}$ of $n \in \NN$ elements, it induces a unique closed morphism $\tau_{V,n}(\varsigma) : V^{\otimes n} \rightarrow V^{\otimes n}$, sending 
$v_{1} \otimes \dots \otimes v_{n}$ to 
\[      (-1)^{\epsilon} v_{1} \otimes \dots \otimes v_{i-1} \otimes v_{j} \otimes v_{i+1} \otimes \dots \otimes v_{j-1} \otimes v_{i} \otimes v_{j+1} \otimes \dots \otimes v_{n},     \]
where $\epsilon = |v_{i}||v_{j}| + (|v_{i}|+|v_{j}|) (\sum_{\ell =i+1}^{j-1} |v_{\ell}|)$, for all homogeneous $v_{1}, \dots, v_{n}$  in $V$.
More generally, for any permutation $\sigma \in \mathbb{S}_{n}$, written as a composition of transpositions $\varsigma_{1} \circ \dots \circ \varsigma_{m}$, we define the closed morphism 
$\tau_{V,n}(\sigma) : V^{\otimes n} \rightarrow V^{\otimes n}$ given by $\tau_{V,n}(\varsigma_{1}) \circ \dots \circ \tau_{V,n}(\varsigma_{m})$. 
We leave to the reader the verification that this is independent of the choice of the transpositions used in the decomposition of $\sigma$. 
In fact, it is easy to check that $\tau_{V,n}(\sigma)$ sends $\bar{v} = v_{1} \otimes \dots \otimes v_{n}$ to 
\begin{equation}
\label{eq:permeps1}
      (-1)^{\epsilon(\sigma,\bar{v})} v_{\sigma^{-1}(1)} \otimes \dots \otimes v_{\sigma^{-1}(n)},     
\end{equation}   
where 
\begin{equation}
\label{eq:permeps2}
\epsilon(\sigma,\bar{v}) = \underset{\text{\begin{tiny} $\begin{matrix}i<j,\\ \sigma^{-1}(i) > \sigma^{-1}(j)\end{matrix}$ \end{tiny}}}{\sum} |v_{\sigma^{-1}(i)}| |v_{\sigma^{-1}(j)}|.
\end{equation}          
We will usually write $\sigma$ instead of $\tau_{V,n}(\sigma)$ to simplify the notation. 

\subsection{The closed monoidal structure}

Given two dg vector spaces $V$ and $W$ we will denote by $\mathcal{H}om(V,W)$ the dg vector space whose 
component of degree $d$ is formed by all morphisms from $V$ to $W$ of degree $d$, and whose differential sends an homogeneous element $f \in \mathcal{H}om(V,W)$ to $\partial_{W} \circ f - (-1)^{|f|} f \circ \partial_{V}$. 
If $W = \kk$, we will denote $\mathcal{H}om(V,\kk)$ by $V^{\#}$. 
If $f : V \rightarrow V'$ is a morphism of degree $d$, then $\mathcal{H}om(f,W) : \mathcal{H}om(V',W) \rightarrow \mathcal{H}om(V,W)$ and $\mathcal{H}om(W,f) : \mathcal{H}om(W,V) \rightarrow \mathcal{H}om(W,V')$ are defined by $\mathcal{H}om(f,W)(g) = (-1)^{|f| |g|} g \circ f$ and $\mathcal{H}om(W,f)(g)= f \circ g$, respectively. 
If $W = \kk$, then $\mathcal{H}om(f,\kk)$ will be denoted by $f^{\#}$. 

It is easy to check that, given homogeneous morphisms $f : V \rightarrow V''$ and $g : V' \rightarrow V$, then  
\begin{equation}
\label{eq:Comp0}
      \mathcal{H}om(g,W) \circ \mathcal{H}om(f,W)  = (-1)^{|f| |g|} \mathcal{H}om(f \circ g,W),
\end{equation} 
and
\begin{equation}
\label{eq:Comp1}
      \mathcal{H}om(W,f) \circ \mathcal{H}om(W,g)  = \mathcal{H}om(W,f \circ g).
\end{equation} 

The usual tensor product $V \otimes W$ of vector spaces is a dg vector space 
for the grading given by $V \otimes W= \oplus_{n \in \ZZ} (V \otimes W)^{n}$, where $(V \otimes W)^{n} = \oplus_{m \in \ZZ} V^{m} \otimes W^{n-m}$, and the differential sends $v \otimes w$ to $\partial_{V}(v) \otimes w + (-1)^{|v|} v \otimes \partial_{W}(w)$, for all homogeneous $v \in V$ and $w \in W$. 
Given $f \in \mathcal{H}om(V,W)$ and $g \in \mathcal{H}om(V',W')$, the map $\Lambda_{V,V',W,W'}(f \otimes g) \in \mathcal{H}om(V \otimes V',W\otimes W')$ is the unique morphism sending $v \otimes w$ to $(-1)^{|g| |v|} f(v) \otimes g(w)$. 
This gives a closed morphism $\Lambda_{V,V',W,W'} : \mathcal{H}om(V,W) \otimes \mathcal{H}om(V',W') \rightarrow \mathcal{H}om(V \otimes V',W\otimes W')$. 
If $W = W' = \kk$, we denote it by $\lambda_{V,V'}$. 
Moreover, if it is clear from the context, we will denote $\Lambda_{V,V',W,W'}(f \otimes g)$ simply by $f \otimes g$.
Note that, using this notation, the differential of $V \otimes W$ is precisely $\partial_{V} \otimes \mathrm{id}_{W} + \mathrm{id}_{V} \otimes \partial_{W}$.  

It is easy to check that 
\begin{equation}
\label{eq:Sim0}
      \Lambda_{V,V',W,W'}(f \otimes g) \circ \Lambda_{U,U',V,V'}(f' \otimes g')  = (-1)^{|f'| |g|} \Lambda_{U,U',W,W'}\big((f \circ f') \otimes (g \circ g')\big),
\end{equation} 
and
\begin{equation}
\label{eq:Sim1}
     \lambda_{W,V} \circ \tau_{V^{\#},W^{\#}} = \tau_{W,V}^{\#}\circ \lambda_{V,W},
\end{equation} 
as well as
\begin{equation}
\label{eq:Sim2}
     \lambda_{V,W} \circ \Lambda_{U^{\#},W^{\#},V^{\#},W^{\#}}(h^{\#} \otimes \mathrm{id}_{W^{\#}}) = \big(\Lambda_{V,W,U,W}(fh\otimes \mathrm{id}_{W})\big)^{\#} \circ \lambda_{U,W},
\end{equation} 
for any homogeneous morphism $h : V \rightarrow U$. 

For later use, we recall that, given $v_{1}, \dots, v_{n} \in V$ homogeneous elements of a graded vector space, and 
$f_{1}, \dots, f_{n} \in V^{\#}$ homogeneous elements, then 
\begin{equation}
\label{eq:unipermvecfun}
   (f_{1} \otimes \dots \otimes f_{n})\big(\sigma (v_{1} \otimes \dots\otimes v_{n})\big) = \big(\sigma^{-1}(f_{1} \otimes \dots \otimes f_{n})\big)(v_{1} \otimes\dots\otimes v_{n}).
\end{equation} 

\subsection{The closed monoidal structure and the suspension}

Given $d \in \ZZ$, and $V$ and $W$ two dg vector spaces, define the closed isomorphisms 
$\mathcal{L}_{V,W}^{d} : \mathcal{H}om(V,W)[d] \rightarrow \mathcal{H}om(V[-d],W)$ and $\mathcal{R}_{V,W}^{d} : \mathcal{H}om(V,W)[d] \rightarrow \mathcal{H}om(V,W[d])$ 
given by $s^{d}_{\mathcal{H}om(V,W)}f \mapsto (-1)^{d |f|} f \circ s^{d}_{V[-d]}$ and $s^{d}_{\mathcal{H}om(V,W)}f \mapsto s^{d}_{W} \circ f$, respectively. 

Moreover, define also the closed isomorphisms 
$\mathscr{L}_{V,W}^{d} : (V \otimes W)[d] \rightarrow (V[d]) \otimes W$ and $\mathscr{R}_{V,W}^{d} : (V \otimes W)[d] \rightarrow V \otimes (W[d])$ 
given by $s^{d}_{V \otimes W}(v \otimes w) \mapsto s^{d}_{V}(v) \otimes w$ and $s^{d}_{V \otimes W}(v \otimes w) \mapsto (-1)^{d |v|} v \otimes s^{d}_{W}(w)$, respectively. 

\section{Double Poisson brackets on dg algebras}
\label{section:Poisson}

The definitions of dg algebras (possibly with unit) and dg (bi)modules are supposed to be well-known. 
We will recall however the definition of a double Poisson dg algebra, specially to avoid some imprecisions concerning signs that exist in the literature.   
The reader might check that the definition coincides with the one introduced in \cite{vdB}, Section 2.7, for the case where the differential vanishes. 
\begin{definition}
\label{definition:po}
Let $(A,\mu_{A},\partial_{A})$ be a dg algebra and $d \in \ZZ$. 
A double Poisson bracket on $A$ of degree $-d$ is a closed morphism of dg vector spaces
\[     \lr{\hskip 0.4mm,}_{A} : A[d] \otimes A[d] \longrightarrow A \otimes A     \]
of degree $d$ satisfying that 
\begin{enumerate}[label={\textup{(\roman*)}}]
\setcounter{enumi}{0} 
\item\label{item:dpa1} $- \lr{\hskip 0.4mm,}_{A} \circ \tau_{A[d],A[d]} = \tau_{A,A} \circ \lr{\hskip 0.4mm,}_{A}$;
\item\label{item:dpa2} for any $a \in A$, the homogeneous map $\operatorname{AD}(a) : A \rightarrow A \otimes A$ of degree $|a|-d$ given by $b \mapsto \lr{s_{A}^{d}a,s_{A}^{d}b}_{A}$ is a \emph{double derivation} of $A$, \textit{i.e.}
\[     \operatorname{AD}(a) \circ \mu_{A} = (\mathrm{id}_{A} \otimes \mu_{A}) \circ (\operatorname{AD}(a) \otimes \mathrm{id}_{A}) + (\mu_{A} \otimes \mathrm{id}_{A}) \circ (\mathrm{id}_{A}  \otimes \operatorname{AD}(a));     \]
\item\label{item:dpa3} $\sum_{\sigma \in C_{3}} \tau_{A,3}(\sigma) \circ \lr{\hskip 0.4mm, \hskip 0.4mm,}_{A,L} \circ \tau_{A[d],3}(\sigma^{-1}) = 0$;
\end{enumerate}
where $C_{3} \subseteq \mathbb{S}_{3}$ is the subgroup of cyclic permutations, and $\lr{\hskip 0.4mm, \hskip 0.4mm,}_{A,L} : A[d]^{\otimes 3} \rightarrow A^{\otimes 3}$ is the map 
$(\lr{\hskip 0.4mm,}_{A} \otimes \mathrm{id}_{A}) \circ (\mathrm{id}_{A[d]} \otimes s_{A}^{d} \otimes \mathrm{id}_{A}) \circ (\mathrm{id}_{A[d]} \otimes \lr{\hskip 0.4mm,}_{A})$. 
\label{dg-Poisson-double}
\end{definition}

Usually, the identity in \ref{item:dpa2} is called the \emph{Leibniz property}, and \ref{item:dpa3} is the \emph{double Jacobi identity}.
\begin{remark}
\label{remark:leibalg}
Note that $\lr{\hskip 0.4mm,}_{A}$ being a closed morphism of dg vector spaces means precisely that 
\[     \big(\partial_{A} \otimes \mathrm{id}_{A} + \mathrm{id}_{A} \otimes \partial_{A}\big) \circ \lr{\hskip 0.4mm,}_{A} = (-1)^{d} \lr{\hskip 0.4mm,}_{A} \circ \big(\partial_{A[d]} \otimes \mathrm{id}_{A[d]} + \mathrm{id}_{A[d]} \otimes \partial_{A[d]}\big).     \]
On the other hand, condition \ref{item:dpa2} in the previous definition is tantamount to the following one. 
Set $\lr{\hskip 0.4mm,}_{A}^{u} : A \otimes A \rightarrow A \otimes A$ to be the map $\lr{\hskip 0.4mm,}_{A}^{u} = \lr{\hskip 0.4mm,}_{A} \circ (s_{A}^{d} \otimes s_{A}^{d})$. 
Then, condition \ref{item:dpa2} is equivalent to 
\[     \lr{\hskip 0.4mm,}_{A}^{u} \circ (\mathrm{id}_{A} \otimes \mu_{A}) = 
(\mathrm{id}_{A} \otimes \mu_{A}) \circ (\lr{\hskip 0.4mm,}_{A}^{u} \otimes \mathrm{id}_{A}) + 
(\mu_{A} \otimes \mathrm{id}_{A}) \circ (\mathrm{id}_{A} \otimes \lr{\hskip 0.4mm,}_{A}^{u}) \circ (\tau_{A,A} \otimes \mathrm{id}_{A}).     \]
\end{remark}

\begin{convention}
\label{convention:sign}
Note that the usual definition of double bracket in \cite{vdB}, Section 2.7, is a map of the form 
$\lr{\hskip 0.4mm,}_{A}^{vdB} : A \otimes A \rightarrow A \otimes A$ satisfying some axioms. 
We leave to the reader to verify that the conditions in Definition \ref{definition:po} for our map $\lr{\hskip 0.4mm,}_{A} : A[d] \otimes A[d] \rightarrow A \otimes A$ are equivalent to those given in \cite{vdB}, Section 2.7, for the map $\lr{\hskip 0.4mm,}_{A}^{vdB} : A \otimes A \rightarrow A \otimes A$, where $\lr{a,b}^{vdB}_{A} = \lr{s^{d}a,s^{d}b}_{A}$, and $a, b \in A$.
It is for this reason that, when dealing with specific elements $a, b$ of $A$, it will be convenient to simply write $\lr{a,b}_{A}$ instead of $\lr{a,b}_{A}^{vdB}$ ($= \lr{s^{d}a,s^{d}b}_{A}$). 
\end{convention}

We recall that, given any dg algebra $(A,\mu_{A},\partial_{A})$, $[A,A]$ denotes the dg vector subspace of $A$ generated by $a b - (-1)^{|a| |b|} b a$, for all homogeneous $a, b \in A$. 
Note that we have the isomorphism of dg vector spaces $A[d]/([A,A][d]) \simeq (A/[A,A])[d]$ given by $s^{d}_{A}(a) + ([A,A])[d] \mapsto s^{d}_{A/[A,A]}(a + [A,A])$. 
The following result is proved by the same argument as the one in \cite{vdB}, Corollary 2.4.6.
\begin{proposition}
\label{proposition:lie}
Let $(A,\mu_{A},\partial_{A})$ be a dg algebra provided with a double Poisson bracket $\lr{\hskip 0.4mm,}_{A}$ of degree $-d \in \ZZ$.  
Set $\{ \hskip 0.4mm, \}_{A} : A[d] \otimes A[d] \rightarrow (A/[A,A])[d]$ to be the composition of $\lr{\hskip 0.4mm,}_{A}$, $\mu_{A}$, the canonical projection $A \rightarrow A/[A,A]$ and $s^{d}_{A/[A,A]}$. 
Then, $\{ \hskip 0.4mm, \}_{A}$ induces a map 
\[     (A/[A,A])[d] \otimes (A/[A,A])[d] \rightarrow (A/[A,A])[d]     \] 
of degree zero, which, together with the map 
\[     (A/[A,A])[d] \rightarrow (A/[A,A])[d]     \] 
of degree $1$ induced by $s^{d}_{A} \circ \partial_{A} \circ s^{-d}_{A[d]}$, gives a structure of dg Lie algebra on $(A/[A,A])[d]$. 
\end{proposition}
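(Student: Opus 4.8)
The plan is to adapt van den Bergh's proof of \cite{vdB}, Corollary 2.4.6, to the dg setting, the only genuinely new ingredient being a careful bookkeeping of Koszul signs. Throughout I would work with the unshifted double bracket $\lr{a,b}_{A}=\lr{s_{A}^{d}a,s_{A}^{d}b}_{A}$ of Convention \ref{convention:sign}, and set $\{a,b\}=\mu_{A}(\lr{a,b}_{A})\in A$ as an auxiliary single bracket before projecting; with this notation $\{\,,\}_{A}$ is $s_{A/[A,A]}^{d}$ composed with the projection of $\{a,b\}$. The asserted degree-zero claim for $\{\,,\}_{A}$ is immediate from the degrees of its four constituents, namely $\lr{\,,}_{A}$ has degree $d$, both $\mu_{A}$ and the projection have degree $0$, and $s_{A/[A,A]}^{d}$ has degree $-d$, so they sum to $0$. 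The substance of the statement is therefore that $\{\,,\}_{A}$ descends to a graded Lie bracket on $(A/[A,A])[d]$ compatible with the induced differential, which I would establish in four steps: (a) descent in each variable, (b) graded antisymmetry, (c) the graded Jacobi identity, and (d) compatibility with the differential.

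First I would prove descent in the second variable. Writing $\lr{a,b}_{A}=\lr{a,b}_{A}'\otimes\lr{a,b}_{A}''$ in Sweedler notation, condition \ref{item:dpa2} (equivalently, the $\lr{\,,}_{A}^{u}$ identity recorded in Remark \ref{remark:leibalg}) unwinds, after multiplying the two tensor legs with $\mu_{A}$, into the graded derivation rule $\{a,bc\}=\{a,b\}\,c\pm b\,\{a,c\}$ with the sign dictated by the relevant (shifted) degrees. Consequently $\{a,bc-(-1)^{|b||c|}cb\}$ is a sum of graded commutators, so $\{a,[A,A]\}\subseteq[A,A]$ and $\{a,-\}$ factors through $A/[A,A]$ in its second argument. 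Next I would use condition \ref{item:dpa1}: multiplying out with $\mu_{A}$ and using that $xy\equiv(-1)^{|x||y|}yx$ modulo $[A,A]$ converts \ref{item:dpa1} into graded antisymmetry of the induced bracket, $\{a,b\}\equiv-\{b,a\}$ modulo $[A,A]$ up to the appropriate sign in the shifted grading. Antisymmetry together with second-variable descent then forces descent in the first variable as well, so $\{\,,\}_{A}$ induces a well-defined graded-antisymmetric map on $(A/[A,A])[d]\otimes(A/[A,A])[d]$.

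The main obstacle will be step (c): deriving the graded Jacobi identity from the double Jacobi identity \ref{item:dpa3}. Here I would start from the cyclic sum $\sum_{\sigma\in C_{3}}\tau_{A,3}(\sigma)\circ\lr{\,,\,}_{A,L}\circ\tau_{A[d],3}(\sigma^{-1})=0$, apply $\mu_{A}\circ(\mu_{A}\otimes\mathrm{id}_{A})$ to collapse the three legs of $A^{\otimes 3}$ into a single element, and project to $A/[A,A]$. Because we work in $A/[A,A]$, the three cyclic terms become equal up to sign while the inner structure of $\lr{\,,\,}_{A,L}$ reproduces the iterated bracket, so that matching the result against the ordinary graded Jacobi expression becomes entirely a matter of checking that the Koszul signs coming from \eqref{eq:permeps2} and from the suspensions $s_{A}^{d}$ conspire correctly. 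This is exactly the point where the $[d]$-shift is essential, since it is what turns the degree-$d$ double bracket into a degree-zero Lie bracket with the right signs, and it is the step most prone to sign errors.

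Finally, for step (d) I would invoke the closedness of $\lr{\,,}_{A}$, recorded explicitly in Remark \ref{remark:leibalg}, which states that $\partial_{A}\otimes\mathrm{id}_{A}+\mathrm{id}_{A}\otimes\partial_{A}$ commutes with $\lr{\,,}_{A}$ up to the sign $(-1)^{d}$. Combining this with the fact that $\partial_{A}$ is a graded derivation of $\mu_{A}$ shows that the induced degree-$1$ operator on $(A/[A,A])[d]$ is a graded derivation of $\{\,,\}_{A}$; as it also squares to zero, being induced by $\partial_{A}$, this yields the claimed dg Lie algebra structure. As the authors indicate, every step follows the pattern of \cite{vdB}, Corollary 2.4.6 and its supporting lemmas, so beyond the sign analysis in step (c) no new idea is required.
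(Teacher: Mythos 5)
Your proposal is correct and follows essentially the same route as the paper, which itself gives no details and simply states that the result ``is proved by the same argument as the one in \cite{vdB}, Corollary 2.4.6''; your four steps (descent via the Leibniz rule, antisymmetry modulo $[A,A]$, Jacobi from the double Jacobi identity after multiplying out and projecting, and compatibility with $\partial_{A}$ from closedness) are precisely the dg adaptation of van den Bergh's argument that the authors have in mind. The only caveat is that you leave the Koszul-sign verifications in steps (b)--(d) unexecuted, but this matches the level of detail the paper itself offers.
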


\section{\texorpdfstring{Cyclic $A_{\infty}$-algebras and pre-Calabi-Yau structures}{Cyclic A-infinity-algebras and pre-Calabi-Yau structures}}
\label{section:cyclic-pre-CY}

\subsection{\texorpdfstring{$A_{\infty}$-algebras}{A-infinity-algebras}}
\label{subsection:Ainfty}

We recall that a \emph{nonunitary $A_{\infty}$-algebra} is a (cohomologically) graded vector space 
$A=\oplus_{n \in \ZZ} A^{n}$ together with a collection of maps $\{ m_{n} \}_{n \in \NN}$, 
where $m_{n} : A^{\otimes n} \rightarrow A$ is a homogeneous morphism of degree $2-n$, satisfying the equation 
\begin{equation}
\tag{$\operatorname{SI}(n)$}
\label{eq:ainftyalgebra}
   \sum_{(r,s,t) \in \mathcal{I}_{n}} (-1)^{r + s t} m_{r + 1 + t} \circ (\mathrm{id}_{A}^{\otimes r} \otimes m_{s} \otimes \mathrm{id}_{A}^{\otimes t}) = 0
\end{equation}
for $n \in \NN$, where $\mathcal{I}_{n} = \{ (r,s,t) \in \NN_{0} \times \NN \times \NN_{0} : r + s + t = n \}$. 
Since we are going to deal exclusively with nonunitary $A_{\infty}$-algebras, from now on, $A_{\infty}$-algebras will always 
be nonunitary, unless otherwise stated. 

\begin{definition}
\label{definition:smallness}
An $A_{\infty}$-algebra $(A,m_{\bullet})$ is said to be
\begin{enumerate}[label={\textup{(\roman*)}}]
\setcounter{enumi}{0} 
\item\label{item:fully-man-ext} \emph{fully manageable} 
if $(A,m_{2},m_{1})$ is a (nonunitary) dg algebra; 
\item\label{item:small} \emph{small} if the multiplications $\{ m_{n} \}_{n \in \NN}$ satisfy that $m_{n} = 0$, for all $n \geq 4$;
\item\label{item:essentially-odd} \emph{essentially odd} if $m_{2i} = 0$, for all $i > 1$. 
\end{enumerate}
\end{definition}

In case \ref{item:fully-man-ext} we also say that $(A,m_{\bullet})$ is a \emph{fully manageable extension} of the dg algebra 
$(A,m_{2},m_{1})$, if we want to emphasize the latter. 

Note that given an essentially odd $A_{\infty}$-algebra, $\operatorname{SI}(2p)$ is equivalent to 
\begin{equation}
\label{eq:ainftyalgebraodd-ev}
   \sum_{r=0}^{2(p-1)} (-1)^{r} m_{2p-1} \circ (\mathrm{id}_{A}^{\otimes r} \otimes m_{2} \otimes \mathrm{id}_{A}^{\otimes (2(p-1)-r)}) - m_{2} \circ (m_{2p-1} \otimes \mathrm{id}_{A} + \mathrm{id}_{A} \otimes m_{2p-1}) = 0,
\end{equation}
for $p \in \NN$, whereas $\operatorname{SI}(2p-1)$ is equivalent to 
\begin{equation}
\label{eq:ainftyalgebraodd-odd}
\begin{split}
   &\delta_{p,2} m_{2} \circ \big(m_{2} \otimes \mathrm{id}_{A} - \mathrm{id}_{A} \otimes m_{2}\big) 
   \\
   &+ \sum_{i=1}^{p} \sum_{r=0}^{2(p-2)} m_{2i-1} \circ \big(\mathrm{id}_{A}^{\otimes r} \otimes m_{2(p-i)+1} \otimes \mathrm{id}_{A}^{\otimes (2(i-2)-r)}\big) = 0,
\end{split}
\end{equation}
for $p \in \NN$.

A \emph{morphism of (nonunitary) $A_{\infty}$-algebras} $f_{\bullet} : A \rightarrow A'$ between two (nonunitary) $A_{\infty}$-algebras $(A,m_{\bullet}^{A})$ and $(A',m_{\bullet}^{A'})$ is a collection of maps $\{ f_{n} \}_{n \in \NN}$, 
where $f_{n} : A^{\otimes n} \rightarrow A'$ is a homogeneous morphism of degree $1-n$ satisfying the equation 
\begin{equation}
\tag{$\operatorname{MI}(n)$}
\label{eq:ainftyalgebramor}
   \sum_{(r,s,t) \in \mathcal{I}_{n}} (-1)^{r + s t} f_{r + 1 + t} \circ (\mathrm{id}_{A}^{\otimes r} \otimes m_{s}^{A} \otimes \mathrm{id}_{A}^{\otimes t})  
   = \sum_{q \in \NN} \sum_{\bar{i} \in \NN^{q, n}} (-1)^{w} m_{q}^{A'} \circ (f_{i_{1}} \otimes \dots \otimes f_{i_{q}}),
\end{equation} 
for $n \in \NN$, where $w = \sum_{j=1}^{q} (j-1) (i_{j} + 1)$ and $\NN^{q, n}$ is the subset of elements $\bar{i}$ of $\NN^{q}$ satisfying that $|\bar{i}| = i_{1} + \dots + i_{q} = n$. 
A morphism is \emph{strict} if $f_{n} = 0$, for all $n \geq 2$. 

\subsection{\texorpdfstring{Cyclic and ultracyclic structures on $A_{\infty}$-algebras}{Cyclic and ultracyclic structures on A-infinity-algebras}}
\label{subsection:ultra-cyclic}

Given $d \in \ZZ$, a \emph{$d$-cyclic (nonunitary) $A_{\infty}$-algebra} is an $A_{\infty}$-algebra $(A,m_{\bullet})$ provided with a nondegenerate bilinear form $\gamma : A \otimes A \rightarrow \kk$ of degree $d$ satisfying that $\gamma \circ \tau_{A,A} = \gamma$ and 
\begin{equation}
\label{eq:ip1}
     \gamma\big(m_{n}(a_{1},\dots,a_{n}),a_{0}\big) = (-1)^{n + |a_{0}|(\sum_{i=1}^{n} |a_{i}|) } \gamma\big(m_{n}(a_{0},\dots,a_{n-1}),a_{n}\big),  
\end{equation}
for all homogeneous $a_{0}, \dots, a_{n} \in A$. 
If we drop the nondegeneracy assumption on $\gamma$ in the previous definition, we will say that $A$ is a \emph{degenerate $d$-cyclic (nonunitary) $A_{\infty}$-algebra}. 

We also introduce the following definition, that will be useful in the sequel. 
In order to do so, given $n \in \NN$ consider the injective map $\xan_{n} : \mathbb{S}_{n} \rightarrow \mathbb{S}_{2n}$, 
sending $\varsigma \in \mathbb{S}_{n}$ to the permutation $\sigma$ defined by $\sigma(2i) = 2 \varsigma(i)$ and 
$\sigma(2i-1) = 2 \varsigma(i)-1$, for all $i \in \{ 1, \dots, n \}$. \footnote{The symbol $\xan$ was downloaded from \url{https://www.charbase.com}.}
A $d$-cyclic (nonunitary) $A_{\infty}$-algebra $(A,m_{\bullet})$ with a nondegenerate bilinear form $\gamma : A \otimes A \rightarrow \kk$ of degree $d$ satisfying that $(A,m_{\bullet})$ is essentially odd is called \emph{$d$-ultracyclic} if, for all $n \in \NN$ and all permutations $\varsigma \in \mathbb{S}_{n}$, we have that 
\begin{equation}
\label{eq:ip1ultra}
\begin{split}
     \gamma&\big(m_{2n-1}(a_{\varsigma(1)},b_{\varsigma(1)},\dots,a_{\varsigma(n-1)},b_{\varsigma(n-1)},a_{\varsigma(n)}),b_{\varsigma(n)}\big) 
     \\
     &= (-1)^{\epsilon(\varsigma^{-1},\bar{a},\bar{b})} \gamma\big(m_{2n-1}(a_{1},b_{1},\dots,a_{n-1},b_{n-1},a_{n}),b_{n}\big),  
\end{split}
\end{equation}
for all homogeneous $a_{1}, b_{1}, \dots, a_{n}, b_{n} \in A$, where $\epsilon(\varsigma^{-1},\bar{a},\bar{b})$ is the sign given 
in \eqref{eq:permeps2} for $\sigma = \xan_{n}(\varsigma^{-1})$ and $\bar{v} = a_{1} \otimes b_{1} \otimes \dots \otimes a_{n} \otimes b_{n}$. 
As before, if we do not assume that $\gamma$ is nondegenerate in the previous definition, we will say that $A$ is a \emph{degenerate $d$-ultracyclic (nonunitary) $A_{\infty}$-algebra}. 

\subsection{Natural bilinear forms and pre-Calabi-Yau structures}
\label{subsection:nat-bilinear}

Moreover, as it will be useful later, given a cyclic $A_{\infty}$-algebra $(A,m_{\bullet})$ with a nondegenerate bilinear form $\gamma$ and $n \in \NN$, we will define the linear map $\operatorname{SI}(n)_{\gamma} : A^{\otimes (n+1)} \rightarrow \kk$ 
by
\begin{equation}
\tag{$\operatorname{SI}(n)_{\gamma}$}
\label{eq:ainftyalgebragamma}
   \sum_{(r,s,t) \in \mathcal{I}_{n}} (-1)^{r + s t} \gamma \circ \big(m_{r + 1 + t} \circ (\mathrm{id}_{A}^{\otimes r} \otimes m_{s} \otimes \mathrm{id}_{A}^{\otimes t}) \otimes \mathrm{id}_{A}\big).
\end{equation}
Note that the $(A,m_{\bullet})$ being a cyclic $A_{\infty}$-algebra is equivalent to the vanishing of $\operatorname{SI}(n)_{\gamma}$, for all $n \in \NN$. 

For the following definition, we first recall the definition of the \emph{natural bilinear form of degree $d \in \ZZ$} associated with any (cohomologically) graded vector space $A = \oplus_{n \in \ZZ} A^{n}$. 
First, set $\partial_{d}A = A \oplus A^{\#}[d]$. 
For clarity, we will denote the suspension map $s^{d}_{A^{\#}} : A^{\#} \rightarrow A^{\#}[d]$ simply by $t$, 
and any element of $A^{\#}[d]$ will be thus denoted by $tf$, for $f \in A^{\#}$.
Define now the bilinear form 
\[
 \gan_{A} : \partial_{d}A \otimes \partial_{d}A \rightarrow \kk
\]
 by 
\begin{equation}
\label{eq:natform} 
     \gan_{A}(tf,a) = (-1)^{|a| |tf|} \gan_{A}(a,tf) = f(a),
     \text{ $\phantom{x}$ and $\phantom{x}$ }
     \gan_{A}(a,b)  = \gan_{A}(tf,tg) = 0,
\end{equation}
for all homogeneous $a,b \in A$ and $f, g \in A^{\#}$. 
Note that $\gan_{A}$ has degree $d$. 
If there is no risk of confusion, we shall denote $\gan_{A}$ simply by $\gan$. \footnote{The symbol $\gan$ was downloaded from \url{https://www.charbase.com}.}

We recall the following crucial definition from \cite{KV18}.
\begin{definition}
Given $d \in \ZZ$, a \emph{$d$-pre-Calabi-Yau (algebra) structure} on a (cohomologically) graded vector space $A = \oplus_{n \in \ZZ} A^{n}$ is the datum of a $(d-1)$-cyclic $A_{\infty}$-algebra on the graded vector space 
$\partial_{d-1}A = A \oplus A^{\#}[d-1]$ 
for the natural bilinear form $\gan_{A} : \partial_{d-1}A \otimes \partial_{d-1}A \rightarrow \kk$ of degree $d-1$ defined in \eqref{eq:natform} such that the corresponding multiplications $\{ m_{n} \}_{n \in \NN}$ of $\partial_{d-1}A$ 
satisfy that $m_{n}(A^{\otimes n}) \subseteq A$, for all $n \in \NN$. 
A $0$-pre-Calabi-Yau algebra will be simply called a \emph{pre-Calabi-Yau algebra}. 
\label{definition-pre-d-CY}
\end{definition}

This implies in particular that the maps $\{ m_{n}|_{A^{\otimes n}} \}_{n \in \NN}$ 
define an $A_{\infty}$-algebra structure on $A$ such that its canonical inclusion into $\partial_{d-1}A$ 
is a strict morphism of $A_{\infty}$-algebras. 

\subsection{\texorpdfstring{Good and nice $A_{\infty}$-algebras}{Good and nice A-infinity-algebras}}
\label{subsection:good-algebras}

We will now introduce the following terminology that will be useful in the sequel.
Let us first fix some notation. 
Assume that there is a decomposition $B_{0} \oplus B_{1}$ of a graded vector space $B$. 
In many of our examples, $B_{0}$ will be a graded vector space $A$ and $B_{1}$ will be $A^{\#}[d-1]$. 
Then, for any odd integer $n \in \NN$, the decomposition $B = B_{0} \oplus B_{1}$ induces a canonical decomposition
\[     B^{\otimes n} = T_{n,g} \oplus T_{n,b},     \]
where 
\begin{equation}
   T_{n,g} = \bigoplus_{\bar{i} \in \mathscr{I}_{n}} B_{i_{1}} \otimes \dots \otimes B_{i_{n}}, \hskip 5mm 
 T_{n,b} = \bigoplus_{\bar{i} \in \{0,1\}^{n} \setminus \mathscr{I}_{n}} B_{i_{1}} \otimes \dots \otimes B_{i_{n}},  
\label{etiqueta-T-n-g}
\end{equation}  
and
\[     \mathscr{I}_{n} = \big\{ \bar{i} = (i_{1},\dots,i_{n})\in \{0,1\}^{n} : \text{ $i_{j} \neq i_{j+1}$ for all $j \in \{ 1, \dots, n - 1 \}$ } \big\}.     \]
Note that $T_{1,b} = 0$. 
A map $m_{n} : B^{\otimes n} \rightarrow B$ will be called \emph{good} if $m_{n}|_{T_{n,b}}$ vanishes and $m_{n}(B_{i_{1}} \otimes \dots \otimes B_{i_{n}}) \subseteq B_{i_{1}}$, 
for all $(i_{1},\dots,i_{n}) \in \mathscr{I}_{n}$. 

\begin{definition}
\label{definition:terminology2}
Let $B$ be an $A_{\infty}$-algebra provided with an extra decomposition $B = B_{0} \oplus B_{1}$. 
We say that $B$ is 
\begin{enumerate}[label={\textup{(\roman*)}}]
\setcounter{enumi}{0} 
\item\label{item:good-algebra}
\emph{good} if the $A_{\infty}$-algebra structure is essentially odd and for every odd integer $n \in \NN$ the multiplication map $m_{n}$ is good; 
\setcounter{enumi}{1} 
\item\label{item:special-algebra}
\emph{special} if the $A_{\infty}$-algebra structure is essentially odd and $(d-1)$-ultracyclic; 
\setcounter{enumi}{2} 
\item\label{item:nice-algebra}
\emph{nice} if it is good and small (see Definition \ref{definition:smallness}, \ref{item:small}).
\end{enumerate}
\end{definition}

All these definitions apply in particular to a $d$-pre-Calabi-Yau structure on $A$, where we take $B = \partial_{d-1}A = A \oplus A^{\#}[d-1]$. 

\section{Nice pre-Calabi-Yau structures and double Poisson dg algebras}
\label{section:core}

\subsection{Relation between objects}
\label{subsection:main1}

We first recall that, given a (nonunitary) dg algebra $A$ with product $\mu_{A}$ and differential $\partial_{A}$, then 
$A^{\#}$ is naturally a dg bimodule over $A$ via 
\[     (a \cdot f \cdot b) (c) = (-1)^{|a|(|f|+|b|+|c|)} f(b c a),     \]
for all homogeneous $a,b,c \in A$ and $f \in A^{\#}$. 
Moreover, if $(M,\partial_{M})$ is any dg bimodule over $A$ and $d \in \ZZ$, then the dg vector space 
$M[d]$ is a dg bimodule over $A$ via $a \cdot s^{d}_{M}(m) \cdot b = (-1)^{d|a|} s^{d}_{M}(a \cdot m \cdot b)$, for all homogeneous $a, b \in A$ and $m \in M$. 
In particular, $A^{\#}[d-1]$ is a dg bimodule over $A$. 
For simplicity, we will write the product of $A$ and its action on any dg bimodule $M$ by juxtaposition, or a small dot.  

Moreover, given a dg bimodule $M$ over a (nonunitary) dg algebra $A$, consider the dg vector space $A \oplus M$ with the product $(a,m) \cdot (a',m') = (a a' , m \cdot a' + a \cdot m')$. 
It is easy to verify that the dg vector space $A \oplus M$ provided with the previous product is a (nonunitary) dg algebra. 
In particular, we see that $A \oplus A^{\#}[d-1]$ is a (nonunitary) dg algebra. 
We leave to the reader to verify the easy assertion that this dg algebra together with the natural bilinear form of degree $d-1$ defined in \eqref{eq:natform} is in fact a $d$-pre-Calabi-Yau structure, by taking $m_{1}$ to be the differential of $A \oplus A^{\#}[d-1]$, $m_{2}$ its product, and $m_{n} = 0$, for all $n \geq 3$. 

\begin{definition}
\label{definition:terminology23}
Let $(A, \mu_{A}, \partial_{A})$ be a (nonunitary) dg algebra, and consider the $d$-pre-Calabi-Yau structure on $A$ defined 
by the dg algebra structure of $A \oplus A^{\#}[d-1]$ described before, 
together with the natural bilinear form of degree $d-1$ defined in \eqref{eq:natform}. 
A $d$-pre-Calabi-Yau structure $\{ m_{n} \}_{n \in \ZZ}$ on $A$ is called \emph{manageable} if $m_{2}$ coincides with the product of $A \oplus A^{\#}[d-1]$ considered before, and \emph{fully manageable} if we also have that $m_{1}$ is the differential of $A \oplus A^{\#}[d-1]$.
\end{definition}

The following result generalizes \cite{IK17}, Theorem 4.2. 
\begin{theorem}
\label{theorem:main1}
Let $d \in \ZZ$, and let $A = \oplus_{n \in \ZZ} A^{n}$ be a (nonunitary) dg algebra with product $\mu_{A}$ and differential $\partial_{A}$. 
Consider the dg algebra algebra structure on $A \oplus A^{\#}[d-1]$ explained above, with 
product $m_{2}$ and differential $m_{1}$, as well as the natural bilinear form on it of degree $d-1$ defined in \eqref{eq:natform}.
Given any nice and fully manageable $d$-pre-Calabi-Yau structure $\{m_{\bullet}\}_{\bullet \in \NN}$ on $A$, 
define the map $\lr{\hskip 0.4mm , } : A \otimes A \rightarrow A \otimes A$ by 
\begin{equation}
\label{eq:m3cor}
     (f \otimes g)\big(\lr{a,b}\big) = s_{f,g}^{a,b}\, \gan\big(m_{3}(b,tg,a),tf\big),
\end{equation}
for all homogeneous $a,b \in A$ and $f,g \in A^{\#}$, where $s_{f,g}^{a,b} =  (-1)^{|b|(|a|+|g|+1)}$.  
Then, $\lr{\hskip 0.4mm , }$ is a double Poisson bracket of degree $-d$ on the dg algebra $A$.
Moreover, the map
\begin{equation}
\label{eq:bij}
 \bigg\{ \begin{matrix} 
 \text{fully manageable nice $d$-pre-CY}
 \\
 \text{structures $\{m_{\bullet}\}_{\bullet \in \NN}$ on $A$}
 \end{matrix} 
\bigg\} 
\longrightarrow
 \bigg\{ \begin{matrix}
 \text{double Poisson brackets on}
 \\
 \text{$A$ of degree $-d$}
 \end{matrix} 
\bigg\} 
\end{equation}
given by sending $m_{3}$ to the double Poisson bracket determined by \eqref{eq:m3cor} is a bijection. 
\end{theorem}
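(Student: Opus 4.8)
The plan is to reduce both sides of \eqref{eq:bij} to a single piece of data and then translate the defining relations of a pre-Calabi-Yau structure, one at a time, into the axioms of a double Poisson bracket. Since $\{m_\bullet\}$ is fully manageable, $m_1$ is the differential and $m_2$ the product of $A\oplus A^{\#}[d-1]$; since it is nice, essential oddness together with smallness force $m_n=0$ for every $n\notin\{1,2,3\}$, so the only free datum is $m_3$. Being good, $m_3$ vanishes on the non-alternating summand $T_{3,b}$ and has alternating components $m_3\colon A\otimes A^{\#}[d-1]\otimes A\to A$ and $m_3\colon A^{\#}[d-1]\otimes A\otimes A^{\#}[d-1]\to A^{\#}[d-1]$. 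First I would use the cyclic symmetry \eqref{eq:ip1} with respect to $\gan$ to express the second component in terms of the first, so that the whole pre-CY datum is encoded by the single map $m_3\colon A\otimes A^{\#}[d-1]\otimes A\to A$. Reading \eqref{eq:m3cor} as a pairing against $f\otimes g\in A^{\#}\otimes A^{\#}$ and invoking nondegeneracy of $\gan$, this component is in linear bijection with closed degree-$d$ maps $\lr{\,,}\colon A[d]\otimes A[d]\to A\otimes A$; the shift by one between the degree $d-1$ of $\gan$ and the degree $-d$ of the bracket is absorbed by $t=s^{d-1}_{A^{\#}}$ together with the degree $-1$ of $m_3$. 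This identifies the underlying sets of both sides of \eqref{eq:bij}, and it remains to match the relations.

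The core of the argument is a dictionary between the surviving Stasheff identities (together with cyclicity) and the double-bracket axioms. I would establish the following four equivalences. First, the cyclic symmetry \eqref{eq:ip1} for $m_3$, equivalently the ultracyclicity \eqref{eq:ip1ultra} for $n=2$, corresponds to axiom \ref{item:dpa1}. Second, the identity $\operatorname{SI}(3)$ corresponds to $\lr{\,,}$ being a closed morphism of dg vector spaces in the sense of Remark \ref{remark:leibalg}: here the two $m_2$ terms cancel by associativity of the square-zero product, leaving exactly the statement that $m_3$ is a chain map for $m_1$. Third, the identity $\operatorname{SI}(4)$ in its essentially odd form \eqref{eq:ainftyalgebraodd-ev} with $p=2$, which couples $m_3$ and $m_2$, corresponds to the Leibniz property \ref{item:dpa2}. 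Fourth, the identity $\operatorname{SI}(5)$ in its form \eqref{eq:ainftyalgebraodd-odd} with $p=3$---in which $m_5=0$ by smallness, so that only the $m_3\circ m_3$ compositions survive---corresponds to the double Jacobi identity \ref{item:dpa3}. Each equivalence is obtained by evaluating the relevant identity on an alternating tuple $tf,a,tg,b,\dots$, pairing with $\gan$, substituting \eqref{eq:m3cor}, and simplifying; in the last case the cyclic sum $\sum_{\sigma\in C_{3}}\tau_{A,3}(\sigma)\circ\lr{\hskip 0.4mm, \hskip 0.4mm,}_{A,L}\circ\tau_{A[d],3}(\sigma^{-1})$ of \ref{item:dpa3} is produced precisely by cyclically rotating the five arguments of $\operatorname{SI}(5)$ by means of \eqref{eq:ip1}.

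Granting the dictionary, the bijection \eqref{eq:bij} follows formally. Given a double Poisson bracket $\lr{\,,}$, one recovers $m_3$ on the alternating component $A\otimes A^{\#}[d-1]\otimes A$ by inverting \eqref{eq:m3cor}, extends it to $A^{\#}[d-1]\otimes A\otimes A^{\#}[d-1]$ by \eqref{eq:ip1}, and sets it to zero on $T_{3,b}$; together with the fixed $m_1$ and $m_2$ this yields a nice, fully manageable, essentially odd collection, and the four equivalences guarantee that it is a cyclic $A_{\infty}$-structure, that is, a $d$-pre-CY structure. The two assignments are mutually inverse by construction. It remains only to note that the identities $\operatorname{SI}(1)$ and $\operatorname{SI}(2)$ hold automatically because $A\oplus A^{\#}[d-1]$ is a dg algebra, while $\operatorname{SI}(n)$ for $n\geq 6$ holds trivially: any nonzero term would require $m_s$ and $m_{r+1+t}$ with $s\leq 3$ and $r+t\leq 2$, forcing $n=r+s+t\leq 5$.

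I expect the main obstacle to be the sign bookkeeping, which is precisely where the graded, differential refinement departs from the ungraded situation of \cite{IK17}. The delicate points are tracking the Koszul signs produced by commuting the suspensions $t=s^{d-1}_{A^{\#}}$ and $s^{d}_{A}$ past homogeneous arguments---so that the prefactor $s_{f,g}^{a,b}$ in \eqref{eq:m3cor} is exactly the one that makes axiom \ref{item:dpa1} and the closedness condition emerge without spurious signs---and, above all, reconciling the permutation signs $\epsilon(\sigma,\bar v)$ of \eqref{eq:permeps2} generated by the cyclic rotations inside $\operatorname{SI}(5)$ with the signs carried by $\tau_{A,3}(\sigma)$ and $\tau_{A[d],3}(\sigma^{-1})$ in \ref{item:dpa3}. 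Verifying the fourth equivalence is therefore the computational heart of the proof, and I would organize it by fixing once and for all a convention for the pairing order so that the remaining manipulations reduce to bookkeeping already prepared by the suspension formulas of Section \ref{section:preliminaries}.
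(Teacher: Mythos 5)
Your proposal follows essentially the same route as the paper's proof: it reduces the pre-Calabi-Yau datum to $m_{3}|_{A\otimes A^{\#}[d-1]\otimes A}$ via goodness, smallness and the cyclicity of $\gan$, and matches the cyclic symmetry, $\operatorname{SI}(3)$, $\operatorname{SI}(4)$ and $\operatorname{SI}(5)$ (evaluated on alternating tuples and paired with $\gan$) with antisymmetry, closedness, the Leibniz rule and the double Jacobi identity, respectively, exactly as the paper does, including the observation that $\operatorname{SI}(n)$ for $n\geq 6$ is vacuous and that surjectivity rests on propagating the restricted Stasheff identities to all components by cyclic rotation. The only content you defer --- the explicit sign bookkeeping and the analogue of the paper's Fact~\ref{lemma-Jacobi} --- is correctly identified as the computational heart, so the plan is sound and coincides with the paper's argument.
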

\begin{proof}
We are only going to consider $s_{f,g}^{a,b}$ when \eqref{eq:m3cor} is not trivially zero, \textit{i.e.} 
if $|f| + |g| = |a| + |b| + d$. 
Note that this last identity implies also that $s_{f,g}^{a,b} = (-1)^{|b|(|f|+d)}$. 

We will first prove that $\lr{\hskip 0.4mm , }$, as defined in \eqref{eq:m3cor}, is a double Poisson bracket on the dg algebra $A$. 
We remark that we will be using Convention \ref{convention:sign}. 
Let us start with the antisymmetric property \ref{item:dpa1} in Definition \ref{definition:po}, \textit{i.e.} 
\begin{equation}
\label{eq:anti}
     \tau_{A,A} \big(\lr{b,a}\big) = - (-1)^{(|a|-d)(|b|-d)} \lr{a,b},
\end{equation}     
for all homogeneous $a, b \in A$.
Evaluating $g \otimes f$ at both sides of the previous equation, where $f ,g \in A^{\#}$ are homogeneous, 
it is clear that \eqref{eq:anti} is equivalent to 
\begin{equation}
\label{eq:anti2}
     (g \otimes f) \big(\lr{a,b}\big) = - (-1)^{(|a| |b| + |f| |g| + d(|a|+|b|+1))} (f \otimes g)\big(\lr{b,a}\big),
\end{equation} 
for all homogeneous $a, b \in A$ and $f, g \in A^{\#}$. 
Using \eqref{eq:m3cor} on each side, we obtain that \eqref{eq:anti2} is equivalent to 
\begin{equation}
\label{eq:anti3}
\begin{split}
     &s_{g,f}^{a,b} \gan\big(m_{3}(b,tf,a),tg\big) 
     \\
     &= - (-1)^{(|a| |b| + |f| |g| + d(|a|+|b|+1))} s_{f,g}^{b,a} \gan\big(m_{3}(a,tg,b),tf\big).
\end{split}
\end{equation} 
On the left-hand side, using the cyclicity property of $\gan$, we obtain that 
\begin{equation}
\label{eq:anti4}
\begin{split}
     &s_{g,f}^{a,b} \gan\big(m_{3}(b,tf,a),tg\big) 
     \\
     &= s_{g,f}^{a,b} (-1)^{|tg|(|b|+|tf|+|a|)+3} \,\gan\big(m_{3}(tg,b,tf),a\big) 
     \\
     &= s_{g,f}^{a,b} (-1)^{|tg|(|b|+|tf|+|a|)+|a|(|f|+|b|+|g|)}\, \gan\big(m_{3}(a,tg,b),tf\big).  
\end{split}
\end{equation}
Hence, comparing \eqref{eq:anti3} and \eqref{eq:anti4}, we see that \eqref{eq:anti} holds if and only if 
\begin{equation}
\label{eq:anti5}
\begin{split}
     s_{g,f}^{a,b} = (-1)^{|a| |f| + |g| |b| + d(|a| + |b|)} s_{f,g}^{b,a},
\end{split}
\end{equation}
where we have used that $|a| + |b| + |f| + |g| + d = 0$ ($\operatorname{mod} 2$). 
Replacing $s_{g,f}^{a,b}$ by its definition, we see that \eqref{eq:anti5} holds, so \eqref{eq:anti} does it as well, as was to be shown. 

Let us now prove the Leibniz property \ref{item:dpa2} in Definition \ref{definition:po}, \textit{i.e.} 
\begin{equation}
\label{eq:leib}
     \lr{c,ab} = \lr{c,a} b + (-1)^{(|c|-d)|a|} a\lr{c,b},
\end{equation}     
for all homogeneous $a, b, c \in A$.
In order to do so, consider the identity \eqref{eq:ainftyalgebra} for $n=4$ for the $A_{\infty}$-algebra structure of $A \oplus A^{\#}[d-1]$ 
evaluated at $a \otimes b \otimes tf \otimes c$, where $a, b, c \in A$ and $f \in A^{\#}$ are homogeneous elements. 
This gives 
\begin{equation}
\label{eq:leib2}
     m_{3}(a b, t f, c) - m_{3}(a, b. tf, c) - (-1)^{|a|} a. m_{3}(b,tf,c) = 0,
\end{equation}     
for all homogeneous $a, b, c \in A$ and $f \in A^{\#}$. 
By applying $\gan(\place,tg)$, for a general homogeneous $g \in A^{\#}$, we see that \eqref{eq:leib2} is tantamount to 
\begin{equation}
\label{eq:leib3}
     \gan\big(m_{3}(a b, t f, c),tg\big) = \gan\big(m_{3}(a, b. tf, c),tg\big) + (-1)^{|a|} \gan\big(a. m_{3}(b,tf,c),tg\big) = 0.
\end{equation}     
Using definition \eqref{eq:m3cor}, we see that the first term of \eqref{eq:leib3} is precisely
\begin{equation}
\label{eq:leib31}
     s_{g,f}^{c,ab} (g \otimes f)\big(\lr{c,ab}\big).
\end{equation} 
Similarly, using the identity $b.(tf) = (-1)^{|b|(d-1)} t(b.f)$, for all homogeneous $b \in A$ and $f \in A^{\#}$, and \eqref{eq:m3cor}, 
the second term of \eqref{eq:leib3} becomes 
\begin{equation}
\label{eq:leib32}
     s_{g,b.f}^{c,a} (-1)^{|b|(d-1)} \big(g \otimes (b.f)\big)\big(\lr{c,a}\big).
\end{equation} 
Using the identity $(g \otimes (b.f))(v \otimes w) = (-1)^{|b|(|f|+|v|+|w|)} (g \otimes f)(v \otimes (w.b))$, for all homogeneous $b, v , w \in A$ and $f, g \in A^{\#}$, and the fact that $|\lr{c,a}| = |c| + |a| - d$, we conclude that \eqref{eq:leib32} is equal to 
\begin{equation}
\label{eq:leib32bis}
     s_{g,b.f}^{c,a} (-1)^{|b|(|f|+|a|+|c|+1)} (g \otimes f)\big(\lr{c,a}b\big).
\end{equation}
Finally, using the cyclicity of $\gan$ we see that the third term of \eqref{eq:leib3} is 
\begin{equation}
\label{eq:leib33}
\begin{split}
     &(-1)^{|a|+|tg|(|a|+|b|+|tf|+|c|-1)} \,\gan\big(tg.a,m_{3}(b,tf,c)\big) 
     \\
     &= (-1)^{|a|+|tg|(|a|+|b|+|tf|+|c|-1)+(|a|+|tg|)(|tf|+|b|+|c|-1)}\, \gan\big(m_{3}(b,tf,c),tg.a\big)
     \\
     &= (-1)^{|a|(|c|+|b|+|g|+|f|)} s_{g.a,f}^{c,b} \big((g.a) \otimes f\big)\big(\lr{c,b}\big),
\end{split}
\end{equation}
where we used the super symmetry of $\gan$ in the second identity, and $tg.a = t(g.a)$ in the last equality.
Using the identity $((g.a) \otimes f)(v \otimes w) = (-1)^{|a||f|} (g \otimes f)(a.v \otimes w)$, for all homogeneous $a, v , w \in A$ and $f, g \in A^{\#}$, we conclude that \eqref{eq:leib33} is equal to 
\begin{equation}
\label{eq:leib33bis}
     (-1)^{|a|(|c|+|b|+|g|)} s_{g.a,f}^{c,b} (g \otimes f)\big(a\lr{c,b}\big).
\end{equation}
Then, multiplying \eqref{eq:leib3} by $s_{g,f}^{c,ab}$ and replacing the corresponding terms by the ones given by 
\eqref{eq:leib31}, \eqref{eq:leib32bis} and \eqref{eq:leib33bis}, we get 
\begin{equation}
\label{eq:leib4}
\begin{split}
     (g \otimes f)\big(\lr{c,ab}\big) &= s_{g,f}^{c,ab} s_{g,b.f}^{c,a} (-1)^{|b|(|f|+|c|+|a|+1)} (g \otimes f)\big(\lr{c,a} b\big) 
     \\
     &+ s_{g,f}^{c,ab} s_{g.a,f}^{c,b} (-1)^{|a|(|g|+|c|+|b|)} (g \otimes f)\big(a \lr{c,b}\big).
\end{split}
\end{equation} 
Hence, \eqref{eq:leib} holds if and only if 
\begin{equation}
\label{eq:leib5}
     s_{g,f}^{c,ab}  = s_{g,b.f}^{c,a} (-1)^{|b|(|f|+|c|+|a|+1)}, \text{ and } (-1)^{|a|(|c|-d)}=s^{c,ab}_{g,f}s^{c,b}_{g.a,f}(-1)^{|a|(|g|+|c|+|b|)},
\end{equation} 
Using the definition of $s_{g,f}^{b,a}$ together with $|f|+|g| = |a|+|b|+|c|-d$ and $|a| = |a|^{2}$ ($\operatorname{mod} 2$), 
one can easily verify \eqref{eq:leib5}, so \eqref{eq:leib} holds as was to be shown. 
This proves the Leibniz property \ref{item:dpa2} in Definition \ref{definition:po}.

\begin{remark}
\label{remark:signos}
Assuming that $s_{g,f}^{a,b}$ is just a function of the degrees $|a|$, $|b|$, $|f|$ and $|g|$ (satisfying that $|a| + |b| + |f| + |g| + d =0$ ($\operatorname{mod} 2$)), one can in fact show that our choice for $s_{g,f}^{a,b}$ is the unique solution of 
\eqref{eq:anti5} and \eqref{eq:leib5}, up to multiplicative constant $\pm 1$. 
This is in fact how we found such an expression. 
\end{remark}

Let us now show that $\lr{\hskip 0.4mm , }$ is a closed morphism of dg vector spaces, \textit{i.e.} 
\begin{equation}
\label{eq:closed}
     \big(\partial_{A} \otimes \mathrm{id}_{A} + \mathrm{id}_{A} \otimes \partial_{A}\big) \big(\lr{a,b}\big) = \lr{\partial_{A}(a),b} + (-1)^{|a|+d} \lr{a,\partial_{A}(b)},
\end{equation}     
for all homogeneous $a, b \in A$.  
In order to prove this, consider the identity \eqref{eq:ainftyalgebra} for $n=3$ for the $A_{\infty}$-algebra structure of $A \oplus A^{\#}[d-1]$ 
evaluated at $b \otimes tg \otimes a$, where $a, b \in A$ and $g \in A^{\#}$ are homogeneous elements, which gives
\begin{equation}
\label{eq:closed2}
\begin{split}
     m_{1}\big(m_{3}(b, t g, a)\big) &+ m_{3}\big(m_{1}(b), tg, a\big) + (-1)^{|b|} m_{3}\big(b,m_{1}(tg),a\big) 
     \\ &+ (-1)^{|b|+|g|+d-1} m_{3}\big(b,tg,m_{1}(a)\big) = 0.
\end{split}
\end{equation}    
Applying $\gan(\place,tf)$, for an arbitrary homogeneous $f \in A^{\#}$, we see that \eqref{eq:closed2} is tantamount to 
\begin{equation}
\label{eq:closed3}
\begin{split}
     &\gan\Big(m_{1}\big(m_{3}(b, t g, a)\big),tf\Big) + \gan\Big(m_{3}\big(m_{1}(b), tg, a\big),tf\Big) 
     \\ 
     &+ (-1)^{|b|} \gan\Big(m_{3}\big(b,m_{1}(tg),a\big),tf\Big) + (-1)^{|b|+|g|+d-1}\gan\Big(m_{3}\big(b,tg,m_{1}(a)\big),tf\Big) = 0.
\end{split}
\end{equation}  
Applying the cyclicity and super symmetry properties of $\gan$ in the first term, as well as the fact that $m_{1}(th) = (-1)^{|h|+d} t(h \circ \partial_{A})$ for any homogeneous element $h$ in $A^{\#}$ in the first and third terms, we see that \eqref{eq:closed3} is equivalent to 
\begin{equation}
\label{eq:closed4}
\begin{split}
     &-(-1)^{|b|+|g|+|a|+|f|}\, \gan\big(m_{3}(b, t g, a),t(f\circ \partial_{A})\big)  
     \\
     &+ \gan\Big(m_{3}\big(m_{1}(b), tg, a\big),tf\Big) + (-1)^{|b|+|g|+d} \,\gan\Big(m_{3}\big(b,t(g\circ \partial_{A}),a\big),tf\Big) 
     \\
     &\phantom{ + \gan(m_{3}(m_{1}(b), tg, a),tf) } + (-1)^{|b|+|g|+d-1}\,\gan\Big(m_{3}\big(b,tg,m_{1}(a)\big),tf\Big) = 0.
\end{split}
\end{equation} 
By \eqref{eq:m3cor} and multiplying \eqref{eq:closed4} by $(-1)^{|b|(|a|+|g|)+|g|}$, we see that \eqref{eq:closed4} is tantamount to 
\begin{equation}
\label{eq:closed5}
\begin{split}
     &-(-1)^{|a|+|f|} \big((f\circ \partial_{A}) \otimes g\big)\big(\lr{a,b}\big) - (-1)^{|a|+|b|} (f \otimes g)\big(\lr{a,\partial_{A}(b)}\big)     
     \\
     &+ (-1)^{|b|+d} \big(f \otimes (g \circ \partial_{A})\big)\big(\lr{a,b}\big) - (-1)^{|b|+d} (f \otimes g)\big(\lr{\partial_{A}(a),b}\big) = 0,
\end{split}
\end{equation} 
where we have used that $m_{1}|_{A} = \partial_{A}$, and $|a|+|b|+|f|+|g|+d+1=0$ ($\operatorname{mod} 2$).
Now, using the Koszul sign rule, we obtain that \eqref{eq:closed5} is precisely 
\begin{equation}
\label{eq:closed6}
     (f \otimes g)(\partial_{A} \otimes \mathrm{id}_{A} + \mathrm{id}_{A} \otimes \partial_{A}) \big(\lr{a,b}\big) = (f \otimes g)\big(\lr{\partial_{A}(a),b} + 
      (-1)^{|a|+d} \lr{a,\partial_{A}(b)}\big),
\end{equation} 
for all for all homogeneous $a, b \in A$ and $f, g \in A^{\#}$, which is clearly equivalent to \eqref{eq:closed}. 

We shall now prove the double Jacobi identity (see \ref{item:dpa3} in Definition \ref{definition:po}), which can be explicitly written 
as
\begin{equation}
 \label{eq:Jacobi-goal}
 \begin{split}
 \lr{c,\lr{b,a}}_L &+(-1)^{(|c|+d) (|a|+|b|)}\sigma \lr{b,\lr{a,c}}_L
 \\
 &+(-1)^{(|a|+d)(|b|+ |c|)}\sigma^{2} \lr{a,\lr{c,b}}_L=0,
\end{split}
\end{equation}
for arbitrary homogeneous elements $a,b,c \in A$, where $\sigma \in \mathbb{S}_{3}$ is the unique cyclic permutation sending $1$ to $2$.
In order to do so, consider \eqref{eq:ainftyalgebra} for $n=5$ evaluated at $a \otimes tf \otimes b \otimes tg \otimes c$, where $a,b,c\in A$ and $f,g\in A^{\#}$ are homogeneous elements. 
It gives
\begin{equation}
\label{eq:MC-5}
\begin{split}
 m_3\big(m_3(a,tf,b),tg,c\big)&+(-1)^{| a|}m_3\big(a,m_3(tf,b,tg),c\big)
 \\
 &+(-1)^{|a|+| tf|}m_3\big(a,tf,m_3(b,tg,c)\big) = 0.
\end{split}
\end{equation}
It is equivalent to the following identity, when we apply $\gan(\place,th)$ for an arbitrary homogeneous element $h\in A^{\#}$,
\begin{equation}
 \label{eq:MC5-pairing}
\begin{split}
  \gan\Big(m_3\big(m_3(a,tf,b),tg,c\big),th\Big) &+(-1)^{|a|}\,\gan\Big(m_3\big(a,m_3(tf,b,tg),c\big),th\Big)
  \\
  &+(-1)^{| a|+| tf|}\,\gan\Big(m_3\big(a,tf,m_3(b,tg,c)\big),th\Big) = 0.
\end{split}
\end{equation}

The following result will be essential to prove the double Jacobi identity.
\begin{fact}
\label{lemma-Jacobi}
Let $a, b, c \in A$ and $f, g, h \in A^{\#}$ be homogeneous elements. 
Then, 
 \begin{equation}
  (f\otimes g\otimes h)\big(\lr{a,\lr{b,c}}_L\big)=\Box^{a,b,c}_{f,g,h}\,\gan\Big(m_3\big(m_3(c,th,b),tg,a\big),tf\Big),
\label{eq:relacion-lemma-tecnico}
\end{equation}
where
\begin{equation}
 \label{eq:symbol-lemma}
  \Box^{a,b,c}_{f,g,h}= (-1)^{|h|(|f|+|g|) + d (|f|+|h|) + (|b|+|c|+|h|)(|a|+|b|+|g|)} 
\end{equation}
\end{fact}
\begin{proof}
First note that, since $(f\otimes g)$ is a functional applied to $\lr{a,\lr{b,c}'}$, we can assume without loss of generality that 
$| \lr{a,\lr{b,c}'} |=| f|+| g|$. 
As a consequence, 
\begin{equation}
\label{eq:rel-lem-tec1}
 (f\otimes g\otimes h)\big(\lr{a,\lr{b,c}}_{L}\big)=(-1)^{| h| (|f|+|g|)}(f\otimes g)\big(\lr{a,\lr{b,c}'}\big)h\big(\lr{b,c}''\big)
\end{equation}         
Using \eqref{eq:m3cor} we see that the right member of \eqref{eq:rel-lem-tec1} is given by
\begin{equation*}
\label{eq:rel-lem-tec2}
\begin{split}
&(-1)^{| h(| f|+| g)}s_{f,g}^{a,\lr{b,c}'}\gan\Big(m_3\big(\lr{b,c}^{\prime},tg,a\big),tf\Big) h\big(\lr{b,c}''\big)
\\
&=(-1)^{| h|(| f|+| g|) + (|b| + |c| - |h| +d)(|a|+|g|+1)}\gan\bigg(m_3\Big(\lr{b,c}' h\big(\lr{b,c}''\big),tg,a\Big),tf\bigg),
\end{split}
\end{equation*}
where we have used that $|\lr{b,c}'| = |\lr{b,c}| - |\lr{b,c}''| = |b| + |c| + d - |h|$. 
Hence, the previous equalities together with $| a|+| b|+| c|+| f|+| g|+| h|=0$ ($\operatorname{mod} 2$) tell us that the identity \eqref{eq:relacion-lemma-tecnico} is tantamount to 
\begin{equation}
\label{eq:rel-lem-tec3}
\lr{b,c}^{\prime}h\big(\lr{b,c}^{\prime\prime}\big) = (-1)^{d(|b|+|c|+1)+ (|b|+|c|+|h|)(|b|+1)} m_3(c,th,b),
\end{equation}
which is equivalent to 
\begin{equation}
\label{eq:rel-lem-tec3bis}
l\big(\lr{b,c}'\big)h\big(\lr{b,c}''\big) = (-1)^{d(|b|+|c|+1)+ (|b|+|c|+|h|)(|b|+1)} l\big(m_3(c,th,b)\big),
\end{equation}
for all $l \in A^{\#}$ homogeneous of degree $|c| + |b| + |h| - d$.
The left member of \eqref{eq:rel-lem-tec3bis} is given by 
\begin{equation}
\label{eq:rel-lem-tec4l}
     l\big(\lr{b,c}'\big)h\big(\lr{b,c}''\big)=(-1)^{(|c| + |b| + |h| - d) | h |} (l \otimes h)\big(\lr{b,c}\big),     
\end{equation}
whereas, on the right member,
\begin{equation}
\label{eq:rel-lem-tec4r}
     l\big(m_3(c,th,b)\big) = (-1)^{(|c| + |b| + |h| - d)(|c| + |b| + |h|+1)} \gan\big(m_{3}(c,th,b),tl\big),     
\end{equation}
by the super symmetry of $\gan$. 
By \eqref{eq:m3cor} and \eqref{eq:rel-lem-tec4r}, the right member of \eqref{eq:rel-lem-tec3bis} gives 
\[     (-1)^{(|c| + |b| + |h| + d) | h |} (l \otimes h)\big(\lr{b,c}\big),          \]
which coincides with \eqref{eq:rel-lem-tec4l}, proving \eqref{eq:rel-lem-tec3bis}, as was to be shown. 
\end{proof}

By Fact \ref{lemma-Jacobi}, the first term of the left member of \eqref{eq:MC5-pairing} is precisely 
\[     \gan\Big(m_3\big(m_3(a,tf,b),tg,c\big),th\Big)=\Box^{c,b,a}_{h,g,f}( h\otimes g\otimes f)\big(\lr{c,\lr{b,a}}_L\big),     \]
or, more explicitly, 
\begin{equation}
 \label{eq:first-Jacobi}
 (-1)^{d(| f|+| h|) + |f|(|g|+|h|) + (|b|+|c|+|g|)(|a|+|b|+|f|)} (h\otimes g\otimes f)\big(\lr{c,\lr{b,a}}_L\big).
\end{equation}

Next, using the cyclicity of $\gan$ twice and the fact that $| a|+| b|+| c|+| f|+| g|+| h|=0$ ($\operatorname{mod} 2$), we see that the second term of the left member of \eqref{eq:MC5-pairing} is
\begin{equation}
\label{eq:second-Jacobi}
     (-1)^{d |a| + (|b|+| f|+| g|-1)(| a|+|c|+|h|+d-1)}\,\gan\big(m_3(c,th,a),m_3(tf,b,tg)\big).     
\end{equation}
By the supersymmetry of $\gan$, \eqref{eq:second-Jacobi} coincides with 
\begin{equation*}
\begin{split}
&(-1)^{d |a| + |b|+|f|+| g|-1}\,\gan\big(m_3(tf,b,tg),m_3(c,th,a)\big)
\\
&=(-1)^{d |a|+(|b|+|f|+| g|)(|a|+|c|+|h|-d+1)}\,\gan\Big(m_3\big(m_3(c,th,a),tf,b\big),tg\Big),
\end{split}
\end{equation*}
where we have used the cyclicity in the second line. 
Fact \ref{lemma-Jacobi} tells us finally that the second term of the left member of \eqref{eq:MC5-pairing} is 
\begin{equation*}
\begin{split}
&(-1)^{| a |}\gan\Big(m_3\big(a,m_3(tf,b,tg),c\big),th\Big)
\\
&=(-1)^{d |a|+(|b|+|f|+| g|)(|a|+|c|+|h|-d+1)}\Box^{b,a,c}_{g,f,h}(g\otimes f\otimes h)\big(\lr{b,\lr{a,c}}_L\big),
\end{split}
\end{equation*}
which, by \eqref{eq:unipermvecfun}, is equal to 
\begin{equation}
\label{eq:second-Jacobibis}
(-1)^{|h|(|f|+|g|)+d |a|+(|b|+|f|+| g|)(|a|+|c|+|h|-d+1)}\Box^{b,a,c}_{g,f,h} (h\otimes g\otimes f)\big(\sigma \lr{b,\lr{a,c}}_L\big).
\end{equation}
Using the definition of $\Box^{b,a,c}_{g,f,h}$ as well as $|a|+|b|+|c|+f|+|g|+|h|=0$ ($\operatorname{mod} 2$) and $|x|^{2} = |x|$($\operatorname{mod} 2$), one obtains that \eqref{eq:second-Jacobibis} is given by 
\begin{equation}
\label{eq:second-Jacobibis2}
(-1)^{d (|c| + |g|)+(|b|+|f|+|g|)(|a|+|b|+|f|)} ( h\otimes g\otimes f)\big(\sigma \lr{b,\lr{a,c}}_L\big).
\end{equation}

Finally, using the cyclicity of $\gan$ twice and $| a|+| b|+| c|+| f|+| g|+| h|=0$ ($\operatorname{mod} 2$), we see that the third term of the right member of \eqref{eq:MC5-pairing} is given by 
\[
(-1)^{d(|a| + |f|)}\gan\Big(m_3\big(m_3(b,tg,c),th,a\big),tf\Big),
\]
which, by Fact \ref{lemma-Jacobi}, coincides with
\begin{equation}
\label{eq:third-Jacobi}
(-1)^{d(| a|+| f|)}\Box^{a,c,b}_{f,h,g} (f\otimes h\otimes g)\big(\lr{a,\lr{c,b}}_L\big).
\end{equation}
By \eqref{eq:unipermvecfun}, we see that \eqref{eq:third-Jacobi} coincides with
\begin{equation*}
(-1)^{d(| a|+| f|) + |f|(|g|+|h|)}\Box^{a,c,b}_{f,h,g} (h\otimes g\otimes f)\big(\sigma^{2} \lr{b,\lr{a,c}}_L\big),
\end{equation*}
which can be further reduced to the form 
\begin{equation}
\label{eq:third-Jacobibisbis}
(-1)^{d(| a|+| g|) + |h|(|f|+|g|) + (|b|+|c|+|g|)(|a|+|c|+|h|)} (h\otimes g\otimes f)\big(\sigma^{2} \lr{b,\lr{a,c}}_L\big).
\end{equation}

Since $| a|+| b|+| c|+| f|+| g|+| h|=0$ ($\operatorname{mod} 2$), it is fairly easy to prove that the product of the sign appearing in \eqref{eq:first-Jacobi} and the one in \eqref{eq:second-Jacobibis2} 
is precisely $(-1)^{(|c|+d)(|a|+|b|)}$, whereas the product of the sign appearing in \eqref{eq:first-Jacobi} and the one in \eqref{eq:third-Jacobibisbis} is $(-1)^{(|a|+d)(|b|+|c|)}$. 
Using these results, plugging \eqref{eq:first-Jacobi}, \eqref{eq:second-Jacobibis2} and \eqref{eq:third-Jacobibisbis} in \eqref{eq:MC5-pairing}, and multiplying the latter by the sign appearing in \eqref{eq:first-Jacobi}, we obtain precisely \eqref{eq:Jacobi-goal}, as was to be shown. 

To sum up, we have proved that, via \eqref{eq:m3cor}, $(A,\mu_{A},\partial_{A})$ is endowed with a double Poisson dg structure, which in turn means that the map \eqref{eq:bij} is well-defined. 
Furthermore, notice that, if $\{m_{\bullet}\}_{\bullet \in \NN}$ is a small and fully manageable $d$-pre-Calabi-Yau structure on $A$ and $\lr{\hskip 0.6mm,}$ is the obtained double Poisson bracket on $A$, in the paragraph including \eqref{eq:closed} we have showed that $(\operatorname{SI}(3))$ for $\{m_{\bullet}\}_{\bullet \in \NN}$ is indeed equivalent to the fact that $\lr{\hskip 0.6mm,}$ is a closed morphism of dg vector spaces.
Similarly, the equivalent version of the Leibniz property given by \eqref{eq:leib2} shows that the latter is in fact tantamount to the vanishing of $\operatorname{SI}(4)_{\gan}|_{A\otimes A \otimes tA^{\#} \otimes A \otimes tA^{\#}}$, where write $tA^{\#}$ 
instead of $A^{\#}[d-1]$. 
Finally, the double Jacobi identity expressed by \eqref{eq:Jacobi-goal} shows that it is in fact equivalent to the vanishing of $\operatorname{SI}(5)_{\gan}|_{A \otimes tA^{\#} \otimes A \otimes tA^{\#} \otimes A\otimes tA^{\#}}$. 
Moreover, we remark that, since $A$ is a dg algebra, it is easy to see that the family of Stasheff identities \eqref{eq:ainftyalgebra} for the multiplications $\{m_{\bullet}\}_{\bullet \in \NN}$ on $\partial_{d-1}A$ is equivalent to just \eqref{eq:ainftyalgebra} for $n \in \{ 3, 4, 5\}$, since $(\operatorname{SI}(1))$ is equivalent to $\partial_{A} \circ \partial_{A}=0$, $(\operatorname{SI}(2))$ is the Leibniz property of $\partial_{A}$ with respect to the product $\mu_{A}$, and \eqref{eq:ainftyalgebra} trivially vanishes for $n > 5$. 

We will finally show that \eqref{eq:bij} is bijective. 
In order to do so, we first note that, given any good, small and fully manageable $d$-pre-Calabi-Yau structure $\{m_{\bullet}\}_{\bullet \in \NN}$ on $A$, it is uniquely determined by $m_{3}|_{A \otimes A^{\#}[d-1] \otimes A}$. 
Indeed, the fact that the $d$-pre-Calabi-Yau structure on $A$ is good tells us that the full $m_{3}$ on $\partial_{d-1}A$ 
is unique, the manageability hypothesis implies that $m_{1}$ and $m_{2}$ are uniquely determined by the 
dg algebra structure of $A$, whereas the smallness assumption tells us $m_{i} = 0$, for all $i > 3$. 
As a consequence, and using that the identity \eqref{eq:m3cor} implies that the associated double bracket $\lr{\hskip 0.6mm,}$ completely determines $m_{3}|_{A \otimes A^{\#}[d-1] \otimes A}$, we conclude that \eqref{eq:bij} is injective. 

We will finally show that it is surjective. 
By the comments in the previous paragraph, it suffices to show that, given any morphism $m_{3} : \partial_{d-1}A^{\otimes 3} \rightarrow \partial_{d-1}A$ of degree $-1$ on the dg algebra $\partial_{d-1}A$ described at the beginning of  Subsection \ref{subsection:main1}, whose product and differential are denoted by $m_{2}$ and $m_{1}$, respectively, satisfying that $m_{3}|_{T_{3,b}} = 0$,  
$m_{3}(A \otimes A^{\#}[d-1] \otimes A) \subseteq A$, $m_{3}(A^{\#}[d-1] \otimes A \otimes A^{\#}[d-1]) \subseteq A^{\#}[d-1]$, and the cyclic identities \eqref{eq:ip1}, for the natural bilinear form $\gan$ of degree $d - 1$, then the vanishing of 
$\operatorname{SI}(4)_{\gan}|_{A\otimes A \otimes tA^{\#} \otimes A \otimes tA^{\#}}$ is equivalent to 
$\operatorname{SI}(4)_{\gan} = 0$; 
furthermore, $\operatorname{SI}(5)_{\gan}|_{A \otimes tA^{\#} \otimes A \otimes tA^{\#} \otimes A\otimes tA^{\#}}=0$ 
is tantamount to the vanishing of $\operatorname{SI}(5)_{\gan}$. 
We leave the reader the tedious but straightforward verification that $\operatorname{SI}(4)_{\gan}|_{A\otimes A \otimes tA^{\#} \otimes A \otimes tA^{\#}} = 0$ is equivalent to $\operatorname{SI}(4)_{\gan}|_{\sigma(A\otimes A \otimes tA^{\#} \otimes A \otimes tA^{\#})}$, where $\sigma \in C_{5} \subseteq \mathbb{S}_{5}$ is any cyclic permutation, whereas 
$\operatorname{SI}(4)_{\gan}|_{\sigma(A\otimes A \otimes tA^{\#} \otimes A \otimes tA^{\#})}$ trivially vanishes if 
$\sigma \in \mathbb{S}_{5} \setminus C_{5}$.
Analogously, it is long but easy to verify that 
$\operatorname{SI}(5)_{\gan}|_{A \otimes tA^{\#} \otimes A \otimes tA^{\#} \otimes A \otimes tA^{\#}} = 0$ is equivalent to $\operatorname{SI}(5)_{\gan}|_{\sigma(A \otimes tA^{\#} \otimes A \otimes tA^{\#} \otimes A \otimes tA^{\#})}$, for any cyclic permutation $\sigma \in C_{6} \subseteq \mathbb{S}_{6}$, and $\operatorname{SI}(5)_{\gan}|_{\sigma(A \otimes tA^{\#} \otimes A \otimes tA^{\#} \otimes A \otimes tA^{\#})}$ is trivially zero if $\sigma \in \mathbb{S}_{6} \setminus C_{6}$.
This concludes the proof of the theorem. 
\end{proof}

\subsection{Relation between morphisms}
\label{subsection:main2}

Let $(A, \mu_{A},\partial_{A})$ and $(B, \mu_{B},\partial_{B})$ be two double Poisson dg algebras, with brackets $\lr{ \hskip 0.6mm,}_{A}$ and $\lr{ \hskip 0.6mm,}_{B}$ of degree $d$, respectively. 
A \emph{morphism of double Poisson dg algebras} $\phi : A \rightarrow B$ is a morphism of dg algebras 
satisfying that $(\phi \otimes \phi) \circ \lr{ \hskip 0.6mm,}_{A} = \lr{ \hskip 0.6mm,}_{B} \circ (\phi[d] \otimes \phi[d])$. 
Since $\phi : A \rightarrow B$ is a morphism of dg algebras, $B$ is a dg bimodule over $A$, so 
\[
 \partial_{d-1}\phi := A \oplus B^{\#}[d-1]
\]
has a dg algebra structure, as explained in the first two paragraphs of Subsection \ref{subsection:main1}. 

Moreover, $\partial_{d-1}\phi $ is naturally endowed with a super symmetric bilinear form 
\[
 \gan_{\phi} : (\partial_{d-1}\phi)^{\otimes 2} \rightarrow \kk
\]
of degree $d-1$ given by 
\begin{equation}
\label{eq:ganAB}
     \gan_{\phi}(tf,a) = (-1)^{|a|(|f|-d+1)}\gan_{\phi}(a,tf) = f(\phi(a)) \text{ and }
     \gan_{\phi}(a,b) = \gan_{\phi}(tf, tg) = 0,   
\end{equation}
for all homogeneous $a,b \in A$ and $f,g\in B^{\#}$.  

\begin{lemma}
\label{lemma:mor1}
Let $(A, \mu_{A},\partial_{A})$ and $(B, \mu_{B},\partial_{B})$ be two dg algebras and let $\phi : A \rightarrow B$ be a morphism of dg algebras. 
Consider the dg algebra structure on $\partial_{d-1}\phi = A \oplus B^{\#}[d-1]$ recalled before, whose product and differential will be denoted by $m_{2}^{\phi}$ and $m_{1}^{\phi}$, respectively. 
Then, $\partial_{d-1}\phi$ provided with $\gan_{\phi}$ defined in \eqref{eq:ganAB} is a degenerate $d$-cyclic
dg algebra.
\end{lemma}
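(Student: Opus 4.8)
The plan is to exploit that $\partial_{d-1}\phi$ carries only the dg algebra structure given by $m_{1}^{\phi}$ and $m_{2}^{\phi}$, with $m_{n}^{\phi} = 0$ for all $n \geq 3$. Consequently, to show that $(\partial_{d-1}\phi, m_{\bullet}^{\phi})$ equipped with $\gan_{\phi}$ is a degenerate $d$-cyclic dg algebra it suffices to accomplish two tasks: first, to check that $\gan_{\phi}$ is super symmetric, \emph{i.e.} $\gan_{\phi} \circ \tau_{\partial_{d-1}\phi, \partial_{d-1}\phi} = \gan_{\phi}$; and second, to verify the cyclicity identity \eqref{eq:ip1} only for $n = 1$ (for the differential $m_{1}^{\phi}$) and $n = 2$ (for the product $m_{2}^{\phi}$), since all higher instances hold trivially. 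Note that no nondegeneracy is asserted, in accordance with the fact that the pairing between $A$ and $B^{\#}[d-1]$ induced by $\phi$ typically degenerates when $\phi$ is neither injective nor surjective.

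First I would dispatch the super symmetry directly from \eqref{eq:ganAB}. Recalling that $t = s^{d-1}_{B^{\#}}$, so that $|tf| = |f| - d + 1$ for homogeneous $f \in B^{\#}$, the transposition $\tau_{\partial_{d-1}\phi, \partial_{d-1}\phi}$ sends $tf \otimes a$ to $(-1)^{(|f| - d + 1)|a|} a \otimes tf$; applying $\gan_{\phi}$ and comparing with the first identity in \eqref{eq:ganAB} shows that the two sides coincide, while the remaining pairings vanish identically by the second identity in \eqref{eq:ganAB}.

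Next I would treat the cyclicity identities, using throughout that $\gan_{\phi}$ pairs nontrivially only an element of $A$ against an element of $B^{\#}[d-1]$. For $n = 1$ the differential $m_{1}^{\phi}$ preserves each of the summands $A$ and $B^{\#}[d-1]$, so the only possibly nonzero instances of \eqref{eq:ip1} are those having exactly one argument in each summand. There the identity reduces to the compatibility of $\gan_{\phi}$ with the differentials, which I would deduce from $\phi$ being a closed morphism, so that $\phi \circ \partial_{A} = \partial_{B} \circ \phi$, together with the formula $m_{1}^{\phi}(th) = (-1)^{|h| + d} t(h \circ \partial_{B})$ for the differential on $B^{\#}[d-1]$, valid for homogeneous $h \in B^{\#}$.

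Finally, for $n = 2$ I would split into cases according to the summand containing each of $a_{0}, a_{1}, a_{2}$. The product $m_{2}^{\phi}$ is assembled from the product of $A$, the $A$-bimodule action on $B^{\#}[d-1]$ induced by $\phi$ as explained in the first two paragraphs of Subsection \ref{subsection:main1}, and the vanishing of $m_{2}^{\phi}$ on $B^{\#}[d-1] \otimes B^{\#}[d-1]$. The nonvanishing cases are precisely the cyclic arrangements carrying a single $B^{\#}[d-1]$ entry, and in each of them \eqref{eq:ip1} follows from the explicit formula for this bimodule structure together with the fact that $\phi$ is an algebra morphism, hence intertwines the products of $A$ and $B$. \textbf{The only genuine difficulty} throughout is the sign bookkeeping; conceptually the statement is the evident adaptation, with $B^{\#}$ in place of $A^{\#}$ and the $A$-action factoring through $\phi$, of the cyclicity of the evaluation pairing $\gan_{A}$ on the square-zero extension $A \oplus A^{\#}[d-1]$ recorded just before Definition \ref{definition:terminology23}.
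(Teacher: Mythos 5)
Your proposal is correct and follows essentially the same route as the paper's proof: reduce to the cyclicity identities \eqref{eq:ip1} for $n=1,2$, observe that only the mixed cases (one argument in $B^{\#}[d-1]$, the rest in $A$) are nontrivial, and reduce those to $\phi$ intertwining the differentials and the products. Your explicit verification of the super symmetry of $\gan_{\phi}$ is a harmless addition (the paper asserts it when defining $\gan_{\phi}$ rather than inside the proof), and does not change the argument.
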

\begin{proof}
We first remark that, by definition, $\partial_{d-1}\phi$ provided with $\gan_{\phi}$ is a degenerate $d$-cyclic dg algebra if and only if \eqref{eq:ip1} is verified for $n = 1, 2$, \textit{i.e.} 
\begin{equation}
\label{eq:dgcycmor}
\begin{split}
\gan_{\phi}\big(m_{1}^{\phi}(x),y\big) &= -(-1)^{|x| |y|} \gan_{\phi}\big(m_{1}^{\phi}(y),x\big),
\\
\gan_{\phi}\big(m_{2}^{\phi}(x,y),z\big) &= (-1)^{|x| (|y|+|z|)} \gan_{\phi}\big(m_{2}^{\phi}(y,z),x\big),
\end{split}
\end{equation}
for all homogeneous $x, y, z \in \partial_{d-1}\phi$. 

The first equation is trivially verified for $x, y \in A$ or $x, y \in B^{\#}[d-1]$, and using a symmetry argument it suffices to consider the case $x = a \in A$ and $y = tf$, with $f \in B^{\#}$, \textit{i.e.} $\gan_{\phi}(\partial_{A}(a),tf) = (-1)^{|tf| (|a| + 1)} \gan_{\phi}(t(f\circ \partial_{B}),a)$, which is equivalent to $f \circ \phi \circ \partial_{A}(a) = f \circ \partial_{B} \circ \phi (a)$. 
Since $\phi$ is a morphism of dg algebras, we conclude that \eqref{eq:ip1} for $n = 1$ is always verified. 

The definition of $\gan_{\phi}$ tells us that the second equation in \eqref{eq:dgcycmor} trivially holds if $x, y, z \in A$, 
or if there are at least two arguments among $x, y, z$ that belong to $B^{\#}[d-1]$. 
Finally, the three cases where two arguments of \eqref{eq:dgcycmor} are elements of $A$ and the other is in $B^{\#}[d-1]$ are clearly equivalent to the identity $\gan_{\phi}(a b, tf) = (-1)^{|a| (|b|+|tf|)} \gan_{\phi}(b \cdot tf,a) = (-1)^{|tf| (|a|+|b|)} \gan_{\phi}(b  tf \cdot a , b)$, for all homogeneous $a, b \in A$ and $f \in B^{\#}$. 
The latter is tantamount to $f \circ \phi (a b) = f(\phi(a) \phi(b))$, which is trivially verified for $\phi$, since it is a morphism of dg algebras. 
\end{proof}

Remarkably, the construction provided in Theorem \ref{theorem:main1} is functorial in the following sense:
\begin{theorem}
\label{theorem:main2}
Let $d \in \ZZ$, and let $(A, \mu_{A},\partial_{A})$ and $(B, \mu_{B},\partial_{B})$ be two double Poisson dg algebras, with brackets $\lr{ \hskip 0.6mm,}_{A}$ and $\lr{ \hskip 0.6mm,}_{B}$ of degree $d$, respectively. 
Let $\phi : A \rightarrow B$ be a morphism of double Poisson dg algebras. 
By Theorem \ref{theorem:main1}, $\partial_{d-1}A$ and $\partial_{d-1}B$ are provided with the corresponding cyclic $A_{\infty}$-algebra structures $\{ m_{\bullet}^{A} \}_{\bullet \in \NN}$ and $\{ m_{\bullet}^{B} \}_{\bullet \in \NN}$, respectively. 
Consider the dg algebra $\partial_{d-1}\phi = A \oplus B^{\#}[d-1]$ described previously, and define the unique good map 
$m_{3}^{\phi} : (\partial_{d-1}\phi)^{\otimes 3} \rightarrow \partial_{d-1}\phi$ satisfying that 
\begin{equation}
   \label{eq:m3-morphism-thm}
   m_{3}^{\phi}(a,tf,b) = m_{3}^{A}\big(a,t(f \circ \phi),b\big), \text{ and } m_{3}^{\phi}(tf,b,tg) = m_{3}^{B}\big(tf, \phi(b),tg\big),
\end{equation}
for all homogeneous $a, b \in A$ and $f, g \in B^{\#}$. 

Then, $\partial_{d-1}\phi$ is a fully manageable nice degenerate $d$-cyclic $A_{\infty}$-algebra, such that 
the maps $\Phi_{A} : \partial_{d-1}\phi \rightarrow \partial_{d-1}A$ and $\Phi_{B} : \partial_{d-1}\phi \rightarrow \partial_{d-1}B$ defined by $(a,tf) \mapsto (a,t(f \circ \phi))$ and $(a,tf) \mapsto (\phi(a),tf)$ for all $a \in A$ and $f \in B^{\#}$, respectively, 
are strict morphisms of $A_{\infty}$-algebras preserving the corresponding bilinear forms. 
\end{theorem}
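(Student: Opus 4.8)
The plan is to deduce every assertion from two families of \emph{intertwining relations}, namely
\[
\Phi_{A} \circ m_{n}^{\phi} = m_{n}^{A} \circ \Phi_{A}^{\otimes n}
\quad\text{and}\quad
\Phi_{B} \circ m_{n}^{\phi} = m_{n}^{B} \circ \Phi_{B}^{\otimes n},
\]
for all $n \in \NN$ (the cases $n \geq 4$ being trivial by smallness). The key structural observation is that the combined map $(\Phi_{A},\Phi_{B}) : \partial_{d-1}\phi \rightarrow \partial_{d-1}A \oplus \partial_{d-1}B$ is \emph{injective}: an element $(a,tf) \in \Ker \Phi_{A}$ forces $a = 0$ and $f \circ \phi = 0$, whereas lying in $\Ker \Phi_{B}$ forces $\phi(a) = 0$ and $f = 0$, so $\Ker \Phi_{A} \cap \Ker \Phi_{B} = 0$. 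Granting the intertwining relations, applying $\Phi_{A}$ (resp. $\Phi_{B}$) to the left-hand side of \eqref{eq:ainftyalgebra} for $\partial_{d-1}\phi$ and pushing $\Phi_{A}$ through each multiplication yields the left-hand side of \eqref{eq:ainftyalgebra} for $\partial_{d-1}A$ (resp. $\partial_{d-1}B$) precomposed with $\Phi_{A}^{\otimes n}$, which vanishes by Theorem \ref{theorem:main1}; injectivity then forces the Stasheff expression for $\partial_{d-1}\phi$ to vanish, so $\partial_{d-1}\phi$ is an $A_{\infty}$-algebra. It is good and small by the very construction of $m_{3}^{\phi}$ via \eqref{eq:m3-morphism-thm} and the vanishing of the higher multiplications (in particular essentially odd), and fully manageable since $m_{1}^{\phi}$ and $m_{2}^{\phi}$ are the differential and product of the dg algebra $\partial_{d-1}\phi$; hence it is nice and fully manageable. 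The intertwining relations are moreover exactly the statement that $\Phi_{A}$ and $\Phi_{B}$ are strict morphisms of $A_{\infty}$-algebras.

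First I would dispatch the relations for $n = 1, 2$. These hold because $\Phi_{A}$ and $\Phi_{B}$ are morphisms of the underlying dg algebras, a direct check from the definition of the bimodule actions on $B^{\#}[d-1]$ (the $A$-action being the $B$-action composed with $\phi$) together with the fact that $\phi$ is a morphism of dg algebras. The real content is the relation for $n = 3$. By goodness both sides agree trivially on $T_{3,b}$, and on $A \otimes B^{\#}[d-1] \otimes A$ for $\Phi_{A}$ (resp. $B^{\#}[d-1] \otimes A \otimes B^{\#}[d-1]$ for $\Phi_{B}$) directly from \eqref{eq:m3-morphism-thm}. Thus everything reduces to the two genuinely nontrivial identities
\[
\Phi_{A}\big(m_{3}^{B}(tf,\phi(b),tg)\big) = m_{3}^{A}\big(t(f \circ \phi), b, t(g \circ \phi)\big)
\quad\text{and}\quad
\phi\big(m_{3}^{A}(a, t(f \circ \phi), b)\big) = m_{3}^{B}\big(\phi(a), tf, \phi(b)\big).
\]
The plan is to rewrite each $m_{3}$ in terms of its associated double bracket through \eqref{eq:m3cor}: unwinding that formula expresses $m_{3}^{A}(a, t\psi, b)$, for $\psi \in A^{\#}$, as the contraction of the second tensor slot of $\lr{b,a}_{A}$ against $\psi$ up to a Koszul sign, and similarly over $B$. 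Feeding this in, the two sides of each identity become contractions of $(\phi \otimes \phi)\lr{b,a}_{A}$ and of $\lr{\phi(b),\phi(a)}_{B}$ respectively, so they coincide by the defining condition $(\phi \otimes \phi) \circ \lr{\hskip 0.6mm,}_{A} = \lr{\hskip 0.6mm,}_{B} \circ (\phi[d] \otimes \phi[d])$ of a morphism of double Poisson dg algebras, noting that $\phi$ has degree zero.

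It remains to handle the degenerate $d$-cyclicity and the preservation of bilinear forms. A direct computation from \eqref{eq:natform} and \eqref{eq:ganAB} gives $\gan_{A} \circ (\Phi_{A} \otimes \Phi_{A}) = \gan_{\phi} = \gan_{B} \circ (\Phi_{B} \otimes \Phi_{B})$, which is precisely the assertion that $\Phi_{A}$ and $\Phi_{B}$ preserve the forms, and yields the super symmetry $\gan_{\phi} \circ \tau_{\partial_{d-1}\phi,\partial_{d-1}\phi} = \gan_{\phi}$. The cyclic identities \eqref{eq:ip1} for $\gan_{\phi}$ and $m_{n}^{\phi}$ then follow by transport: using $\gan_{\phi} = \gan_{A} \circ (\Phi_{A} \otimes \Phi_{A})$ together with the $n$-th intertwining relation, each side of \eqref{eq:ip1} for $\partial_{d-1}\phi$ equals the corresponding side of \eqref{eq:ip1} for $\partial_{d-1}A$, whose validity is guaranteed by Theorem \ref{theorem:main1}; the signs match because $\Phi_{A}$ preserves degrees, and for $n = 1, 2$ this is already contained in Lemma \ref{lemma:mor1}. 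Since no nondegeneracy of $\gan_{\phi}$ is invoked, we obtain exactly a \emph{degenerate} $d$-cyclic structure, as claimed.

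I expect the main obstacle to be the verification of the two nontrivial $m_{3}$-identities above: although conceptually they are a clean translation of the double Poisson morphism condition, carrying them out requires the same careful bookkeeping of Koszul signs, bimodule actions and the suspension shift $t$ as in the proof of Theorem \ref{theorem:main1}, now further complicated by the interplay of the two distinct spaces $A$ and $B$ linked through $\phi$.
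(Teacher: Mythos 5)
Your proposal is correct, and its computational core coincides with the paper's: both arguments hinge on the two identities
$\phi\big(m_{3}^{A}(a,t(f \circ \phi),b)\big) = m_{3}^{B}\big(\phi(a),tf,\phi(b)\big)$ and
$t^{-1}m_{3}^{A}\big(t(f\circ \phi),a,t(g \circ \phi)\big) = \big(t^{-1}m_{3}^{B}(tf,\phi(a),tg)\big) \circ \phi$
(the paper's \eqref{eq:phiequiv}), which you, like the paper, extract from the condition $(\phi \otimes \phi) \circ \lr{\hskip 0.4mm,}_{A} = \lr{\hskip 0.4mm,}_{B} \circ (\phi[d] \otimes \phi[d])$ by unwinding \eqref{eq:m3cor} and the pairings. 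Where you genuinely diverge is in how the axioms for $\partial_{d-1}\phi$ are then deduced. The paper verifies the Stasheff identities \eqref{eq:ainftyalgebra} for $n=3,4,5$ and the cyclicity \eqref{eq:ip1} for $m_{3}^{\phi}$ by a direct case analysis on which tensor slots lie in $A$ versus $B^{\#}[d-1]$, using \eqref{eq:phiequiv} to convert each mixed case into an instance of the corresponding identity for $\partial_{d-1}A$ or $\partial_{d-1}B$; only afterwards does it check that $\Phi_{A}$ and $\Phi_{B}$ are strict morphisms. You invert this order: you first establish the intertwining relations $\Phi_{A} \circ m_{n}^{\phi} = m_{n}^{A} \circ \Phi_{A}^{\otimes n}$ and $\Phi_{B} \circ m_{n}^{\phi} = m_{n}^{B} \circ \Phi_{B}^{\otimes n}$ (whose only nontrivial content is again \eqref{eq:phiequiv}), observe that $\Ker \Phi_{A} \cap \Ker \Phi_{B} = 0$, and then transport both the Stasheff identities and the cyclicity \eqref{eq:ip1} from $\partial_{d-1}A$ and $\partial_{d-1}B$ back to $\partial_{d-1}\phi$ formally, the degree-$0$ property of $\Phi_{A}$, $\Phi_{B}$ guaranteeing that no Koszul signs intervene. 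This buys you a cleaner, more conceptual deduction that dispenses with the case-by-case verification of $\operatorname{SI}(4)$ and $\operatorname{SI}(5)$ and of the cyclicity of $\gan_{\phi}$ against $m_{3}^{\phi}$ (the latter being handled in the paper partly by Lemma \ref{lemma:mor1} and partly by hand), at the modest cost of having to state the joint-injectivity observation explicitly; the paper's route is more pedestrian but keeps every identity visible at the level of elements. I see no gap in your argument.
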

\begin{proof}
We first remark that the fact that $\lr{\hskip 0.4mm,}_{B} \circ (\phi[d] \otimes \phi[d]) = (\phi \otimes \phi) \circ \lr{\hskip 0.4mm,}_{A}$ implies that 
\begin{equation}
\label{eq:phiequiv}
\begin{split}
     \phi\Big(m_{3}^{A}\big(a,t(f\circ \phi),b\big)\Big) &= m_{3}^{B}\big(\phi(a),tf,\phi(b)\big),  
     \\ 
     t^{-1}m_{3}^{A}\big(t(f\circ \phi),a,t(g \circ \phi)\big) &= \Big(t^{-1}m_{3}^{B}\big(tf,\phi(a),tg\big)\Big) \circ \phi,
\end{split}
\end{equation}
for all homogeneous $a, b \in A$ and $f, g \in B^{\#}$. 
Indeed, the first identity follows from 
\begin{align*}
&(g \circ \phi) \Big(m_{3}^{A}\big(a,t(f\circ \phi),b\big)\Big) = (-1)^{|tg|(|tf|+|a|+|b|)} \gan_{A}\Big(m_{3}^{A}\big(a,t(f\circ \phi),b\big) , t(g \circ \phi) \Big)  
\\
&= (-1)^{|a|(|f|+|b|+1) + |tg|(|tf|+|a|+|b|)} \big((g \circ \phi) \otimes (f \circ \phi)\big)\big(\lr{b,a}_{A}\big) 
\\
&= (-1)^{|a|(|f|+|b|+1) + |tg|(|tf|+|a|+|b|)} (g\otimes f)\big(\lr{\phi(b),\phi(a)}_{B}\big) 
\\
&= (-1)^{|tg|(|tf|+|a|+|b|)} \gan_{B}\Big(m_{3}^{B}\big(\phi(a),tf,\phi(b)\big) , tg \Big)
= g\Big(m_{3}^{B}\big(\phi(a),tf,\phi(b)\big)\Big),
\end{align*} 
whereas the second follows from
\begin{align*}
t^{-1}m_{3}^{A}\big(&t(f\circ \phi),a,t(g \circ \phi)\big)(b)
= \gan_{A}\Big(m_{3}^{A}\big(t(f\circ \phi),a,t(g \circ \phi)\big), b \Big)  
\\
&= - (-1)^{|tf|(|tg|+|a|+|b|)}\gan_{A}\Big(m_{3}^{A}\big(a,t(g \circ \phi),b\big), t(f\circ \phi) \Big)
\\
&= - (-1)^{|a|(|g|+|b|+1) + |tf|(|tg|+|a|+|b|)} \big((f \circ \phi) \otimes (g \circ \phi)\big)\big(\lr{b,a}_{A}\big) 
\\
&= - (-1)^{|a|(|g|+|b|+1) + |tf|(|tg|+|a|+|b|)} (f\otimes g)\big(\lr{\phi(b),\phi(a)}_{B}\big) 
\\
&= - (-1)^{|tf|(|tg|+|a|+|b|)} \gan_{B}\Big(m_{3}^{B}\big(\phi(a),tg,\phi(b)\big) , tf \Big)
\\
&= \gan_{B}\Big(m_{3}^{B}\big(tf,\phi(a),tg\big) , \phi(b) \Big)
= \Big(t^{-1}m_{3}^{B}\big(tf,\phi(a),tg\big)\Big)\big(\phi(b)\big).
\end{align*} 

We now show that $\partial_{d-1}\phi$ is an $A_{\infty}$-algebra. 
The first two Stasheff identities \eqref{eq:ainftyalgebra} are clearly verified, since they only involve the differential and the product of the dg algebras $A$ and $B$.
Moreover, the Stasheff identity \eqref{eq:ainftyalgebra} for $n = 3$ is also trivially verified. 
Indeed, since $\partial_{d-1}\phi$ provided with $m_{1}^{\phi}$ and $m_{2}^{\phi}$ is a dg algebra, 
it suffices to show that the contribution of the terms involving $m_{3}^{\phi}$ in the Stasheff identity for $n = 3$ at an element $x_{1} \otimes x_{2} \otimes x_{3}$, where $x_{i} \in A$ or $x_{i} \in B^{\#}[d-1]$ are homogeneous elements, vanish. 
It is easy to see that in this case the Stasheff identity for $n = 3$ of $\partial_{d-1}\phi$ is a consequence of the 
corresponding Stasheff identity for $n = 3$ of either $\partial_{d-1}A$ or $\partial_{d-1}B$. 

We now prove the Stasheff identity \eqref{eq:ainftyalgebra} for $n = 4$ at an element $x_{1} \otimes \dots \otimes x_{4}$, where $x_{i} \in A$ or $x_{i} \in B^{\#}[d-1]$ are homogeneous elements. 
It is easy to see that the only cases where there is at least one possibly nonvanishing term 
are the following 
\begin{enumerate}[label={\textup{(\alph*)}}]
\setcounter{enumi}{0} 
\item\label{item:a} $x_{2} \in B^{\#}[d-1]$ and $x_{1}, x_{3}, x_{4} \in A$;

\item\label{item:b} $x_{3} \in B^{\#}[d-1]$ and $x_{1}, x_{2}, x_{4} \in A$;

\item\label{item:c} $x_{1}, x_{3} \in A$ and $x_{2}, x_{4} \in B^{\#}[d-1]$;

\item\label{item:d} $x_{2}, x_{4} \in A$ and $x_{1}, x_{3} \in B^{\#}[d-1]$.
\end{enumerate}
The cases \ref{item:a} and \ref{item:b} only involve $m_{3}^{A}$ and $m_{3}^{B}$, respectively, so they follow from 
the Stasheff identity \eqref{eq:ainftyalgebra} for $n = 4$ of $\partial_{d-1}A$ and $\partial_{d-1}B$, respectively. 
The proof for cases \ref{item:c} and \ref{item:d} uses the precise same arguments, so we focus on the latter. 
We write $x_{1} = tf$, $x_{3} = tg$ with $f, g \in B^{\#}$, and $x_{2} = a$ and $x_{4} = b$. 
It is easy to see that the fourth Stasheff identity evaluated at $tf \otimes a \otimes tg \otimes b$ is precisely 
\begin{equation}
\label{eq:int}
     m_{3}^{B}\big(tf,\phi(a),tg.\phi(b)\big) - m_{3}^{B}\big(tf,\phi(a),tg\big).\phi(b) - (-1)^{|tf|} tf.\phi\Big(m_{3}^{A}\big(a,t(g\circ \phi),b\big)\Big)=0,     
\end{equation} 
or, equivalently,
\begin{equation}
\label{eq:int2}
     m_{3}^{B}\big(tf,\phi(a),tg.\phi(b)\big) - m_{3}^{B}\big(tf,\phi(a),tg\big).\phi(b) - (-1)^{|tf|} tf.m_{3}^{B}\big(\phi(a),tg,\phi(b)\big)=0,     
\end{equation} 
where we have used the first identity of \eqref{eq:phiequiv} in the last term of \eqref{eq:int}. 
Since \eqref{eq:int2} is precisely a particular instance of the Stasheff identity for $n = 4$ of $\partial_{d-1}B$, the Stasheff identity for $n = 4$ of $\partial_{d-1}\phi$ in the case \ref{item:c} follows. 

We finally prove the Stasheff identity \eqref{eq:ainftyalgebra} for $n = 5$ at an element $x_{1} \otimes \dots \otimes x_{5}$, where $x_{i} \in A$ or $x_{i} \in B^{\#}[d-1]$ are homogeneous elements. 
It is easy to see that the only cases where there are possibly nonvanishing terms are either if $x_{2}, x_{4} \in B^{\#}[d-1]$ and $x_{1}, x_{3}, x_{5} \in A$, or if $x_{1}, x_{3}, x_{5} \in B^{\#}[d-1]$ and $x_{2}, x_{4} \in A$. 
Since the arguments for both cases are the same, we only consider the former case, for which we write 
$x_{2} = tf$, $x_{4} = tg$, with $f, g \in B^{\#}[d-1]$, and $x_{1} =a$, $x_{3} = b$, $x_{5} = c$.
The corresponding Stasheff identity then reads 
 \begin{equation}
\label{eq:int3}
\begin{split}
     &m_{3}^{A}\Big(m_{3}^{A}\big(a,t(f \circ \phi), b \big),t(g \circ \phi),c \Big) 
- (-1)^{|a|} m_{3}^{A}\bigg(a, t\Big(t^{-1}m_{3}^{B}\big(tf, \phi(b),tg\big)\Big) \circ \phi,c \bigg)     
     \\
     &+ (-1)^{|a|+|tf|} m_{3}^{A}\Big(a,t(f \circ \phi) , m_{3}^{A}\big(b,t(g \circ \phi),c \big)\Big)=0,     
\end{split}
\end{equation} 
or, equivalently,
\begin{equation}
\label{eq:int4}
\begin{split}
     &m_{3}^{A}\Big(m_{3}^{A}\big(a,t(f \circ \phi), b \big),t(g \circ \phi),c \Big) 
- (-1)^{|a|} m_{3}^{A}\Big(a, m_{3}^{A}\big(t(f \circ \phi), b,t(g \circ \phi)\big),c \Big)     
     \\
     &+ (-1)^{|a|+|tf|} m_{3}^{A}\Big(a,t(f \circ \phi) , m_{3}^{A}\big(b,t(g \circ \phi),c \big)\Big)=0,     
\end{split}
\end{equation} 
where we have used the second identity of \eqref{eq:phiequiv} in the second term of \eqref{eq:int3}. 
Since \eqref{eq:int4} is precisely a particular case of the Stasheff identity of $\partial_{d-1}A$ for $n = 5$, the Stasheff identity of $\partial_{d-1}\phi$ for $n = 5$ follows. 
Taking into account that the Stasheff identities for $n \geq 6$ trivially vanish, because the higher products $m_{n}^{\phi}$ of $\partial_{d-1}\phi$ vanish for $n \geq 4$, we conclude that $\partial_{d-1}\phi$ is an $A_{\infty}$-algebra. 
Moreover, $\partial_{d-1}\phi$ is by definition small, and it is clearly fully manageable with respect to the dg algebra structure fixed at the beginning of this subsection.

We now show that $\gan_{\phi}$ satisfies the cyclicity property \eqref{eq:ip1} with respect to $m_{3}^{\phi}$, \textit{i.e.} 
\begin{equation*}
     \gan_{\phi}\big(m_{3}^{\phi}(x,y,z),w\big) = - (-1)^{|w|(|x|+|y|+|z|) } \gan_{\phi}\big(m_{3}^{\phi}(w,x,y),z\big),  
\end{equation*}
for all homogeneous $w, x, y, z \in \partial_{d-1}\phi$. 
It is easy to see that the only nontrivial cases are either if $x, z \in A$ and $w, y \in B^{\#}[d-1]$, or if $x, z \in B^{\#}[d-1]$ and $w, y \in A$, both of which are tantamount to 
\begin{equation}
\label{eq:ip1bisbis}
     \gan_{\phi}\big(m_{3}^{\phi}(a,tf,b),tg\big) = - (-1)^{|tg|(|a|+|b|+|tf|)} \gan_{\phi}\big(m_{3}^{\phi}(tg,a,tf),b\big),  
\end{equation}
for all homogeneous $a, b \in A$ and $f, g \in B^{\#}$. 
By definition, the left member of \eqref{eq:ip1bisbis} is precisely 
$(-1)^{|a|(|b|+|f|+1)} ((g \circ \phi) \otimes (f \circ \phi))(\lr{b,a})$, 
whereas the right member is 
\begin{align*}
     - (-1)^{|tg|(|a|+|b|+|tf|)} \gan_{\phi}\Big(&m_{3}^{B}\big(tg,\phi(a),tf\big),\phi(b)\Big) 
= \gan_{\phi}\Big(m_{3}^{B}\big(\phi(a),tf,\phi(b)\big), tg\Big) 
\\ 
&= (-1)^{|a|(|b|+|f|+1)} (g \otimes f)\big(\lr{\phi(b),\phi(a)}\big).     
\end{align*}
They clearly coincide, since $\phi$ is a morphism of double Poisson dg algebras. 
Combining this with the previous lemma we see that $\gan_{\phi}$ satisfies the cyclicity property \eqref{eq:ip1} with respect to $m_{n}^{\phi}$, for all $n \in \NN$. 

It is easy to verify that $\Phi_{A}$ and $\Phi_{B}$ commute with the corresponding differentials and the corresponding products, 
since $\phi$ is a morphism of dg algebras. 
To prove that they are strict morphisms $A_{\infty}$-algebras, it suffices to show that $\Phi_{A} \circ m_{3}^{\phi} = m_{3}^{A} \circ \Phi_{A}^{\otimes 3}$ and $\Phi_{B} \circ m_{3}^{\phi} = m_{3}^{B} \circ \Phi_{B}^{\otimes 3}$. 
Since both identities are proved by the same arguments, we will only consider the former, evaluated at $x_{1} \otimes x_{2} \otimes x_{3}$, for homogeneous $x_{1}, x_{2}, x_{3} \in \partial_{d-1}\phi$. 
By definition of $m_{3}^{\phi}$ and $m_{3}^{A}$, the only nontrivial cases are either when $x_{1}, x_{3} \in A$ and $x_{2} \in B^{\#}[d-1]$, or $x_{1}, x_{3} \in B^{\#}[d-1]$ and $x_{2} \in A$. 
The first case is direct from the definition of $\Phi_{A}$, whereas the second follows from the second identity in \eqref{eq:phiequiv}. 

It remains to show that $\Phi_{A}$ and $\Phi_{B}$ commute with the corresponding bilinear forms, \textit{i.e.} 
$\gan_{\phi} = \gan_{A} \circ \Phi_{A}^{\otimes 2} = \gan_{B} \circ \Phi_{B}^{\otimes 2}$, but this is straightforward. 
The theorem is thus proved.
\end{proof}

A direct consequence of the previous theorem is the following result. 
\begin{corollary}
\label{corollary:main2}
Assume the same hypotheses as in Theorem \ref{theorem:main2}. 
If the morphism of double Poisson dg algebras $\phi : A \rightarrow B$ is a quasi-isomorphism and $B$ is locally finite dimensional, 
then the strict morphisms of degenerate cyclic $A_{\infty}$-algebras $\Phi_{A} : \partial_{d-1}\phi \rightarrow \partial_{d-1}A$ and $\Phi_{B} : \partial_{d-1}\phi \rightarrow \partial_{d-1}B$ are also quasi-isomorphisms.
\end{corollary}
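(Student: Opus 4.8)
The plan is to reduce the assertion to a statement about the underlying cochain complexes. Since $\Phi_{A}$ and $\Phi_{B}$ are strict morphisms of $A_{\infty}$-algebras by Theorem \ref{theorem:main2}, being a quasi-isomorphism means precisely that they induce isomorphisms on the cohomology of the underlying complexes, whose differentials are the respective maps $m_{1}$. First I would observe that, as a cochain complex, the square-zero extension $\partial_{d-1}\phi = A \oplus B^{\#}[d-1]$ decomposes as the direct sum of the complexes $(A,\partial_{A})$ and $B^{\#}[d-1]$, because the differential $m_{1}^{\phi}$ of a square-zero extension $A \oplus M$ acts diagonally, as $\partial_{A}$ on $A$ and as the module differential on $M$. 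The same holds for $\partial_{d-1}A = A \oplus A^{\#}[d-1]$ and $\partial_{d-1}B = B \oplus B^{\#}[d-1]$.

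Under these decompositions the maps become block-diagonal: by their definition in Theorem \ref{theorem:main2}, one checks directly that $\Phi_{A} = \mathrm{id}_{A} \oplus \phi^{\#}[d-1]$ and $\Phi_{B} = \phi \oplus \mathrm{id}_{B^{\#}[d-1]}$, and each is a chain map with respect to the diagonal differentials. Hence it suffices to verify that each block is a quasi-isomorphism. The blocks $\mathrm{id}_{A}$ and $\mathrm{id}_{B^{\#}[d-1]}$ are trivially so, and $\phi : A \to B$ is a quasi-isomorphism by hypothesis, which immediately settles $\Phi_{B}$. The only remaining point is that $\phi^{\#} : B^{\#} \to A^{\#}$ be a quasi-isomorphism.

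To handle $\phi^{\#}$ I would invoke that $\kk$ is a field, so that the contravariant functor $\mathcal{H}om(\place,\kk)$ is exact; consequently it commutes with cohomology up to degree reversal, yielding a natural isomorphism $H^{n}(V^{\#}) \cong (H^{-n}(V))^{\#}$ for any complex $V$ of $\kk$-vector spaces. Applying this to $\phi$ gives $H^{n}(\phi^{\#}) = (H^{-n}(\phi))^{\#}$, which is an isomorphism because $H^{-n}(\phi)$ is. Since the suspension $(\place)[d-1]$ preserves quasi-isomorphisms, $\phi^{\#}[d-1]$ is a quasi-isomorphism as well, and therefore so is $\Phi_{A}$. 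The hypothesis that $B$ is locally finite dimensional is what guarantees that the natural bilinear form $\gan_{B}$ on $\partial_{d-1}B$ is nondegenerate, so that $\partial_{d-1}B$ is a genuine pre-Calabi-Yau structure rather than merely a degenerate cyclic one; this is what makes the cyclic statement about $\Phi_{B}$ meaningful, even though the purely cohomological computation above does not require it.

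The main obstacle I anticipate is purely bookkeeping: one must verify carefully that the $m_{1}$-differentials induced on $A^{\#}[d-1]$ and $B^{\#}[d-1]$ inside the square-zero extensions agree, up to the suspension signs recorded in Section \ref{section:preliminaries} and the bimodule structure of $A^{\#}[d-1]$, with the shifted dual differentials, so that $\phi^{\#}[d-1]$ really is a chain map realizing the block-diagonal description claimed above. Once the signs coming from the suspension isomorphisms $\mathcal{L}_{V,W}^{d}$ and $\mathcal{R}_{V,W}^{d}$ are aligned, the argument reduces to the elementary homological facts stated, and the conclusion follows.
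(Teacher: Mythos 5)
Your proof is correct and supplies exactly the argument the paper leaves implicit (the corollary is stated as a ``direct consequence'' of Theorem \ref{theorem:main2} with no written proof): the underlying complexes split as direct sums, $\Phi_{A}$ and $\Phi_{B}$ are block-diagonal with blocks $\mathrm{id}_{A}$, $\phi$, $\mathrm{id}_{B^{\#}[d-1]}$ and $\phi^{\#}[d-1]$, and the exactness of $\mathcal{H}om(\place,\kk)$ over a field gives $H^{n}(\phi^{\#})\cong (H^{-n}(\phi))^{\#}$, so $\phi^{\#}$ is a quasi-isomorphism. Only your closing aside is slightly off: the evaluation pairing on $\partial_{d-1}B$ is nondegenerate for \emph{any} $B$ (for $b\neq 0$ there is $f$ with $f(b)\neq 0$, and conversely), so local finite-dimensionality is not what secures nondegeneracy --- as your own computation shows, that hypothesis plays no role in the proof of the stated conclusion.
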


Motivated by the previous theorem, we introduce the following. 
\begin{definition}
\label{definition:morprecy}
Let $A$ and $A'$ be two $d$-pre-Calabi-Yau algebra structures on the graded vector spaces $A$ and $A'$, respectively. 
A \emph{morphism} from $A$ to $A'$ is a triple $(C,\Phi,\Psi)$, where $C$ is a degenerate $(d-1)$-ultracyclic $A_{\infty}$-algebra, and $\Phi : C \rightarrow A$ and $\Psi : C \rightarrow A'$ are strict morphisms of $A_{\infty}$-algebras that preserve the 
corresponding bilinear forms. 

We say that a morphism $(C,\Phi,\Psi)$ from $A$ to $A'$ and a morphism $(C',\Phi',\Psi')$ from $A'$ to $A''$ are \emph{composable} if there exists a triple $(C'',\Phi'',\Psi'')$, where $C''$ is a degenerate $(d-1)$-ultracyclic $A_{\infty}$-algebra, $\Phi'' : C'' \rightarrow C$ and $\Psi'' : C'' \rightarrow C'$ are strict morphisms of $A_{\infty}$-algebras that preserve the 
corresponding bilinear forms and $\Psi \circ \Phi'' = \Phi' \circ \Psi''$. 
The \emph{composition} of $(C,\Phi,\Psi)$ and $(C',\Phi',\Psi')$ is then defined to be $(C'',\Phi \circ \Phi'',\Psi' \circ \Psi')$. 
\end{definition}

The proof of the following result follows exactly the same pattern as the (last part of the proof of) Theorem \ref{theorem:main2}, so we leave it to the reader. 
\begin{theorem}
\label{theorem:main2bis}
Let $d \in \ZZ$, and let $(A, \mu_{A},\partial_{A})$, $(B, \mu_{B},\partial_{B})$ and $(C, \mu_{C},\partial_{C})$ be three double Poisson dg algebras, with brackets $\lr{ \hskip 0.6mm,}_{A}$, $\lr{ \hskip 0.6mm,}_{B}$ and $\lr{ \hskip 0.6mm,}_{C}$ of degree $d$, respectively. 
Let $\phi : A \rightarrow B$ and $\psi : B \rightarrow C$ be two morphisms of double Poisson dg algebras, and let $\upsilon = \psi \circ \phi$. 
Following Theorem \ref{theorem:main2}, consider the morphisms $(\partial_{d-1}\phi , \Phi_{A}, \Phi_{B})$ and $(\partial_{d-1}\psi , \Psi_{A}, \Psi_{B})$ induced by $\phi$ and $\psi$, respectively. 

Consider the fully manageable nice degenerate $d$-cyclic $A_{\infty}$-algebra $\partial_{d-1}\upsilon$ on 
$A \oplus C^{\#}[d-1]$. 
Then the maps $\Upsilon_{\phi} : \partial_{d-1}\upsilon \rightarrow \partial_{d-1}\phi$ and $\Upsilon_{\psi} : \partial_{d-1}\upsilon \rightarrow \partial_{d-1}\psi$ defined by $(a,tf) \mapsto (a,t(f \circ \psi))$ and $(a,tf) \mapsto (\phi(a),tf)$ for all $a \in A$ and $f \in C^{\#}$, respectively, 
are strict morphisms of $A_{\infty}$-algebras preserving the corresponding bilinear forms, and satisfying that $\Phi_{B} \circ \Upsilon_{\phi} = \Psi_{A} \circ \Upsilon_{\psi}$. 
As a consequence, $(\partial_{d-1}\phi , \Phi_{A}, \Phi_{B})$ and $(\partial_{d-1}\psi , \Psi_{A}, \Psi_{B})$ are composable morphisms and their composition is $(\partial_{d-1}\upsilon, \Phi_{A} \circ \Upsilon_{\phi}, \Psi_{B} \circ \Upsilon_{\psi})$.
\end{theorem}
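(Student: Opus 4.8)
The plan is to deduce the statement from Theorem \ref{theorem:main2} applied to $\upsilon$, together with the two transfer identities \eqref{eq:phiequiv}, now invoked for both $\phi$ and $\psi$. First I would note that $\upsilon = \psi \circ \phi$ is itself a morphism of double Poisson dg algebras: it is a composition of morphisms of dg algebras, and
\[
(\upsilon \otimes \upsilon) \circ \lr{\hskip 0.4mm,}_{A} = (\psi \otimes \psi) \circ \lr{\hskip 0.4mm,}_{B} \circ (\phi[d] \otimes \phi[d]) = \lr{\hskip 0.4mm,}_{C} \circ (\upsilon[d] \otimes \upsilon[d]).
\]
Hence Theorem \ref{theorem:main2} applies verbatim to $\upsilon$, yielding the fully manageable nice degenerate $d$-cyclic $A_{\infty}$-structure on $\partial_{d-1}\upsilon = A \oplus C^{\#}[d-1]$, whose only nonzero higher multiplication is determined by $m_{3}^{\upsilon}(a,tf,b) = m_{3}^{A}(a,t(f \circ \upsilon),b)$ and $m_{3}^{\upsilon}(tf,b,tg) = m_{3}^{C}(tf,\upsilon(b),tg)$, for $a,b \in A$ and $f,g \in C^{\#}$. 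This establishes the first assertion; in particular the degenerate cyclic structure on $\partial_{d-1}\upsilon$ requires no separate verification.

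Next I would verify that $\Upsilon_{\phi}$ and $\Upsilon_{\psi}$ are strict morphisms of $A_{\infty}$-algebras. Commutation with $m_{1}^{\bullet}$ and $m_{2}^{\bullet}$ is immediate, exactly as for $\Phi_{A},\Phi_{B}$ in Theorem \ref{theorem:main2}, since the underlying maps are built from the dg-algebra morphisms $\phi$ and $\psi$. Strictness then reduces to the equalities $\Upsilon_{\phi} \circ m_{3}^{\upsilon} = m_{3}^{\phi} \circ \Upsilon_{\phi}^{\otimes 3}$ and $\Upsilon_{\psi} \circ m_{3}^{\upsilon} = m_{3}^{\psi} \circ \Upsilon_{\psi}^{\otimes 3}$, which by goodness need only be checked on tensors of type $A \otimes C^{\#}[d-1] \otimes A$ and $C^{\#}[d-1] \otimes A \otimes C^{\#}[d-1]$. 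On the first type, the identity for $\Upsilon_{\phi}$ follows from the associativity $(f \circ \psi) \circ \phi = f \circ \upsilon$ (since $\Upsilon_{\phi}$ and $m_{3}^{A}$ fix $A$), while the identity for $\Upsilon_{\psi}$ is the first equation of \eqref{eq:phiequiv} for $\phi$, with $f \circ \psi$ in place of $f$. On the second type, the identity for $\Upsilon_{\phi}$ is the analogue for $\psi$ of the second equation of \eqref{eq:phiequiv} (evaluated at $\phi(b)$, using $\psi(\phi(b)) = \upsilon(b)$), while the identity for $\Upsilon_{\psi}$ holds trivially because $\Upsilon_{\psi}$ fixes $C^{\#}[d-1]$. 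Preservation of the bilinear forms, $\gan_{\upsilon} = \gan_{\phi} \circ \Upsilon_{\phi}^{\otimes 2} = \gan_{\psi} \circ \Upsilon_{\psi}^{\otimes 2}$, then follows by unwinding \eqref{eq:ganAB} and the identity $f(\upsilon(a)) = (f \circ \psi)(\phi(a))$.

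It remains to check the compatibility $\Phi_{B} \circ \Upsilon_{\phi} = \Psi_{A} \circ \Upsilon_{\psi}$, which I would do by evaluating both composites on generators: for $a \in A$ both send $a \mapsto \phi(a)$, and for $f \in C^{\#}$ both send $tf \mapsto t(f \circ \psi)$, so they agree on all of $\partial_{d-1}\upsilon$. Consequently $(\partial_{d-1}\upsilon, \Upsilon_{\phi}, \Upsilon_{\psi})$ is precisely the witnessing triple demanded by Definition \ref{definition:morprecy}, so $(\partial_{d-1}\phi, \Phi_{A}, \Phi_{B})$ and $(\partial_{d-1}\psi, \Psi_{A}, \Psi_{B})$ are composable, and their composition is $(\partial_{d-1}\upsilon, \Phi_{A} \circ \Upsilon_{\phi}, \Psi_{B} \circ \Upsilon_{\psi})$, as claimed.

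The step I expect to be the main obstacle is purely a matter of careful bookkeeping rather than of new ideas: for each of the two mixed tensor types one must track how $\Upsilon_{\phi}$ and $\Upsilon_{\psi}$ act separately on the $A$- and $C^{\#}[d-1]$-summands and then match each case to the correct instance of \eqref{eq:phiequiv} for $\phi$ or for $\psi$, keeping in mind $\upsilon = \psi \circ \phi$. As in Theorem \ref{theorem:main2} and Lemma \ref{lemma:mor1}, the cyclicity and degeneracy verifications demand no fresh computation, being inherited from the application of that theorem to $\upsilon$.
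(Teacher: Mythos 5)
Your proposal is correct and follows exactly the route the paper intends: the authors explicitly leave this proof to the reader, noting that it "follows exactly the same pattern as the last part of the proof of Theorem \ref{theorem:main2}", and your argument fills in precisely those details (applying Theorem \ref{theorem:main2} to $\upsilon=\psi\circ\phi$ after checking it is a morphism of double Poisson dg algebras, reducing strictness of $\Upsilon_{\phi},\Upsilon_{\psi}$ to the two identities of \eqref{eq:phiequiv} for $\phi$ and for $\psi$ on the two mixed tensor types, and verifying the compatibility $\Phi_{B}\circ\Upsilon_{\phi}=\Psi_{A}\circ\Upsilon_{\psi}$ on generators). Your case analysis for $m_{3}$ matches the correct instances of \eqref{eq:phiequiv}, so no gaps remain.
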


This result tells us that the constructions in Theorems \ref{theorem:main1} and \ref{theorem:main2} define a (partial) functor from the category of $d$-double Poisson dg algebras to the partial category of $d$-pre-Calabi-Yau algebras provided with the morphisms introduced in Definition \ref{definition:morprecy}, that preserves quasi-isomorphisms, under some mild assumptions.  

\section{\texorpdfstring{Pre-Calabi-Yau structures and double $P_{\infty}$-algebras}{Pre-Calabi-Yau structures and double P-infinity-algebras}}
\label{section:P-infinity}

We now introduce the definition of a double $P_{\infty}$-algebra. 
It is essentially the same as the one presented in \cite{Sch09}, Def. 4.1, up to some sign differences. 
\begin{definition}
\label{definition:poinf}
A \emph{double $P_\infty$-algebra} is a (nonunitary) graded algebra $A=\oplus_{n\in\mathbb{Z}} A^{n}$ provided with a family 
of homogeneous maps $\lr{\dots}_{p} : A^{\otimes p} \rightarrow A^{\otimes p}$ indexed by $p \in \NN$, 
where $\lr{\dots}_{p}$ has degree $2-p$, satisfying that 
\begin{enumerate}[label={\textup{(\roman*)}}]
\setcounter{enumi}{0} 
\item\label{item:dpainf1} $\tau_{A,p}(\sigma) \circ \lr{\dots}_{p} \circ \tau_{A,p}(\sigma^{-1}) = \operatorname{sgn}(\sigma) \lr{\dots}_{p}$, for all $\sigma \in \mathbb{S}_{p}$;
\item\label{item:dpainf2} for all $p \in \NN$ and homogeneous elements $a_{1}, \dots, a_{p-1} \in A$, the homogeneous map 
\[     \operatorname{AD}(a_{1}, \dots, a_{p-1}) : A \rightarrow A \otimes A     \] 
of degree $|a_{1}|+\dots+|a_{p-1}|+2-p$ given by $a \mapsto \lr{a_{1}, \dots, a_{p-1},a}_{p}$ is a \emph{double derivation} of $A$, \textit{i.e.}
\begin{equation}
\label{eq:dpoisinfpoi}
\tag{$\operatorname{DLeib}_{\infty}(p)$}
\begin{split}
     &\lr{a_{1}, \dots, a_{p-1},a b}_{p} = \lr{a_{1}, \dots, a_{p-1},a}_{p} b
     \\
     & \hskip 3cm + (-1)^{|a|(p+\sum\limits_{j=1}^{p-1} |a_{j}|)} a \lr{a_{1}, \dots, a_{p-1},b}_{p},     
     \end{split}
     \end{equation}
for all homogeneous $a, b \in A$;
\item\label{item:dpainf3} for all $p \in \NN$, 
\begin{equation}
\label{eq:dpoisinfjac}
\tag{$\operatorname{DJac}_{\infty}(p)$}
\begin{split}
     \sum_{i=1}^{p} (-1)^{i(p+1)} \sum_{\sigma \in C_{p}} \operatorname{sgn}(\sigma) \tau_{A,p}(\sigma) \circ \lr{\dots}_{i,p-i+1} \circ \tau_{A,p}(\sigma^{-1}) = 0,
\end{split}
\end{equation}
where  
\[     \lr{\dots}_{i,p-i+1} = \big(\lr{\dots}_{i} \otimes \mathrm{id}_{A}^{\otimes (p-i)}\big) \circ \big(\mathrm{id}_{A}^{\otimes (i-1)} \otimes \lr{\dots}_{p-i+1}\big).     \]
\end{enumerate}
\end{definition}

Let $A$ be a double $P_\infty$-algebra with brackets $\lr{\dots}_{p} : A^{\otimes p} \rightarrow A^{\otimes p}$ for $p \in \NN$. 
Given $p \in \NN$ and $n \geq p$, we define $\lr{\dots}_{p,L} : A^{\otimes n} \rightarrow A^{\otimes n}$ 
as $\lr{\dots}_{p} \otimes \mathrm{id}_{A}^{\otimes (n-p)}$. 

\begin{remark}
We leave to the reader the straightforward verification that a double $P_\infty$-algebra $(A,\mu_{A})$ with brackets $\{\lr{\dots}_{p}\}_{p \in \NN}$ satisfying that $\lr{\dots}_{p} = 0$ for all $p >2$ is a double Poisson dg algebra of degree zero, with $\lr{\hskip 0.6mm,}_{A} = \lr{\dots}_{2}$ and $\partial_{A} = \lr{\dots}_{1}$. 
Indeed, $\operatorname{Jac}_{\infty}(1)$ means exactly that $\partial_{A}$ is a differential, $\operatorname{Jac}_{\infty}(2)$ 
is precisely the fact that $\lr{\hskip 0.6mm,}_{A}$ is a morphism of closed dg vector spaces, $\operatorname{Jac}_{\infty}(3)$ is the double Jacobi identity for $\lr{\hskip 0.6mm,}_{A}$, $\operatorname{Leib}_{\infty}(1)$ means exactly that $\partial_{A}$ is a derivation of the 
graded algebra $(A,\mu_{A})$ and $\operatorname{Leib}_{\infty}(2)$ is the Leibniz identity for $\lr{\hskip 0.6mm,}_{A}$. 
The antisymmetry conditions given in the previous definition and in Definition \ref{dg-Poisson-double} \ref{item:dpa1}
  are clearly equivalent. 
The identities \eqref{eq:dpoisinfpoi} for $p> 2$ and \eqref{eq:dpoisinfjac} for $p>3$ are trivially verified. 
\end{remark}

\begin{theorem}
\label{theorem:main3}
Let $A=\oplus_{n\in\mathbb{Z}} A^{n}$ be a (nonunitary) graded algebra with product $\mu_A$. 
Consider the graded algebra structure on $\partial_{-1}A = A\oplus A^\#[-1]$ described in the first two paragraphs of Subsection \ref{subsection:main1}, with product $m_{2}$, as well as the natural nondegenerate bilinear form of degree $-1$ given by \eqref{eq:natform}.
Then, given a good manageable special pre-Calabi-Yau structure $\{ m_{\bullet} \}_{\bullet\in\mathbb{N}}$ on $A$, 
we define the family of maps $\{ \lr{\dots}_{p} \}_{p \in \NN}$ with $\lr{\dots}_{p} : A^{\otimes p} \rightarrow A^{\otimes p}$ given by 
\begin{equation}
\label{eq:mncor}
(f_{1} \otimes \dots \otimes f_{p})\big(\lr{a_{1}, \dots, a_{p}}_{p}\big) = s_{f_{1}, \dots, f_{p}}^{a_{1}, \dots, a_{p}}
\,\gan\big(m_{2p-1}(a_{p},tf_{p},\dots,a_{2},tf_{2},a_{1}),tf_{1}\big),
\end{equation}
for $p \in \NN$ and all homogeneous $a_{1}, \dots, a_{p} \in A$ and $f_{1}, \dots, f_{p} \in A^{\#}$, where 
 \begin{equation}
\label{eq:mncor2}
\begin{split}
s_{f_{1}, \dots, f_{p}}^{a_{1}, \dots, a_{p}} =& (-1)^{|a_{p}| |f_{1}| + (p+1)(|a_{p}| + |f_{1}|) + \sum\limits_{j=1}^{p} (p-j) |a_{j}| + \sum\limits_{j=1}^{p} (j-1) |f_{j}|} 
\\
&(-1)^{\sum\limits_{1\leq i<j <p} |a_{i}| |a_{j}| + \sum\limits_{1< i<j \leq p} |f_{i}| |f_{j}| + \sum\limits_{1< i\leq j <p} |f_{i}| |a_{j}|}.
\end{split}
\end{equation}
Then, $\{ \lr{\dots}_{p} \}_{p \in \NN}$ determines a structure of a double $P_{\infty}$-algebra on the graded algebra $A$.
Moreover, the map
\begin{equation}
\label{eq:bij2}
 \bigg\{ \begin{matrix}
 \text{good manageable special}
 \\
 \text{pre-CY structures $\{m_{\bullet}\}_{\bullet \in \NN}$ on $A$}
 \end{matrix} 
\bigg\} 
\longrightarrow
 \bigg\{ \begin{matrix}
 \text{double $P_{\infty}$-algebra}
 \\
 \text{structures $\{ \lr{\dots}_{\bullet} \}_{\bullet \in \NN}$ on $A$}
 \end{matrix} 
\bigg\} 
\end{equation}
given by sending $\{ m_{\bullet} \}_{\bullet \in \NN}$ to the family of maps $\{ \lr{\dots}_{\bullet} \}_{\bullet \in \NN}$ determined by \eqref{eq:mncor} is a bijection. 
\end{theorem}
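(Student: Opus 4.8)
The plan is to mimic the strategy used for Theorem \ref{theorem:main1}, but now for all odd multiplications simultaneously and with $d=0$, so that $\partial_{-1}A = A \oplus A^{\#}[-1]$ carries the natural form $\gan$ of degree $-1$ defined in \eqref{eq:natform}. The governing principle is the dictionary under which the bracket $\lr{\dots}_{p}$ corresponds to the multiplication $m_{2p-1}$, and each of the three axioms of a double $P_{\infty}$-algebra (Definition \ref{definition:poinf}) becomes, via \eqref{eq:mncor}, a statement about $\{m_{\bullet}\}$ paired with $\gan$. Concretely, I expect: axiom \ref{item:dpainf1} (signed $\mathbb{S}_{p}$-invariance) to be equivalent to the \emph{special}, i.e. $(-1)$-ultracyclic, hypothesis \eqref{eq:ip1ultra}; the identity $\operatorname{DLeib}_{\infty}(p)$ of \eqref{eq:dpoisinfpoi} to be equivalent to the even Stasheff identity $\operatorname{SI}(2p)$ in the essentially odd form \eqref{eq:ainftyalgebraodd-ev}; and the identity $\operatorname{DJac}_{\infty}(p)$ of \eqref{eq:dpoisinfjac} to be equivalent to the odd Stasheff identity $\operatorname{SI}(2p-1)$ in the form \eqref{eq:ainftyalgebraodd-odd}. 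The cases $p \leq 3$ are precisely those already carried out in Theorem \ref{theorem:main1}, so the genuine content is the passage to arbitrary $p$.

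The technical engine will be a generalization of Fact \ref{lemma-Jacobi}. First I would prove, by induction on the number of nested brackets, a formula expressing the functional $(f_{1} \otimes \dots \otimes f_{p})$ evaluated on a composite $\lr{\dots}_{i,p-i+1}$ in terms of the nested pairing $\gan\big(m_{2i-1}(\dots,m_{2(p-i)+1}(\dots),\dots),tf_{1}\big)$, up to an explicit Koszul sign analogous to $\Box^{a,b,c}_{f,g,h}$ of \eqref{eq:symbol-lemma}. Granting this, each of the three correspondences is obtained by evaluating the relevant Stasheff identity for $\{m_{\bullet}\}$ on a mixed tensor of the form $a_{p} \otimes tf_{p} \otimes \dots \otimes a_{1}$ (with a trailing $tf_{1}$ inside $\gan$), rewriting every term through the generalized Fact \ref{lemma-Jacobi}, and matching the resulting signs against those in \eqref{eq:mncor2} and in \eqref{eq:dpoisinfpoi}, \eqref{eq:dpoisinfjac}. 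Axiom \ref{item:dpainf1} is instead handled using the supersymmetry and cyclicity of $\gan$ together with \eqref{eq:ip1ultra}: it suffices to verify it on adjacent transpositions $\sigma = (i\ i{+}1)$, which generate $\mathbb{S}_{p}$, so that $\operatorname{sgn}(\sigma) = -1$ matches the sign produced by swapping the adjacent pairs $a_{i} \otimes tf_{i}$ and $a_{i+1}\otimes tf_{i+1}$ inside the argument of $m_{2p-1}$.

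The bijectivity of \eqref{eq:bij2} follows the same two-step pattern as in Theorem \ref{theorem:main1}. For injectivity I would observe that a good manageable special pre-Calabi-Yau structure is completely determined by the mixed restrictions $m_{2p-1}|_{A \otimes A^{\#}[-1] \otimes \dots \otimes A}$: manageability fixes $m_{2}$, goodness forces the remaining components of each $m_{2p-1}$ to vanish or to be determined by this mixed restriction, and \eqref{eq:mncor} shows that the mixed restriction is in turn determined by $\lr{\dots}_{p}$. For surjectivity, given a double $P_{\infty}$-algebra I would define the unique good $m_{2p-1}$ on $\partial_{-1}A$ through \eqref{eq:mncor}, and must verify that $\{m_{\bullet}\}$ is a special pre-Calabi-Yau structure, i.e. that $\operatorname{SI}(n)_{\gan}$ of \eqref{eq:ainftyalgebragamma} vanishes for every $n$. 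The correspondences established above show that the restriction of $\operatorname{SI}(2p)_{\gan}$ (resp. $\operatorname{SI}(2p-1)_{\gan}$) to the canonical good mixed component equals $\operatorname{DLeib}_{\infty}(p)$ (resp. $\operatorname{DJac}_{\infty}(p)$) and hence vanishes, while the ultracyclicity supplies \eqref{eq:ip1ultra}.

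The main obstacle, as before, is twofold. First, the explicit sign $s_{f_{1},\dots,f_{p}}^{a_{1},\dots,a_{p}}$ of \eqref{eq:mncor2} must be shown to be the unique solution, up to a global $\pm 1$, of the compatibility equations arising from axioms \ref{item:dpainf1} and \ref{item:dpainf2} for all $p$ at once; tracking these Koszul signs through the nested pairings is the genuinely laborious part. Second, one must prove the reduction lemma asserting that the vanishing of $\operatorname{SI}(n)_{\gan}$ on the single canonical good component already forces its vanishing on all of $(\partial_{-1}A)^{\otimes(n+1)}$: by goodness, every component having two adjacent arguments in the same summand contributes trivially, while the supersymmetry and cyclicity of $\gan$, combined with \eqref{eq:ip1ultra}, relate the remaining components through cyclic permutations and the $\mathbb{S}_{p}$-action, exactly as in the final paragraph of the proof of Theorem \ref{theorem:main1}. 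I expect this general-$p$ reduction, rather than any single computation, to be the crux of the argument.
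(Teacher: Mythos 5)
Your proposal is correct and follows essentially the same route as the paper: the same dictionary $\lr{\dots}_{p}\leftrightarrow m_{2p-1}$ matching $\operatorname{DLeib}_{\infty}(p)$ with $\operatorname{SI}(2p)$ and $\operatorname{DJac}_{\infty}(p)$ with $\operatorname{SI}(2p-1)$, the same generalized version of Fact \ref{lemma-Jacobi} (the paper's Fact \ref{fact:jacobi}) as the technical engine, the same reduction of axiom \ref{item:dpainf1} to adjacent transpositions via ultracyclicity, and the same injectivity/surjectivity argument through good components and cyclic permutations. The only cosmetic difference is that the paper's nested-bracket lemma needs no induction, since $\lr{\dots}_{i,p-i+1}$ involves only one level of nesting.
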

\begin{proof}
We will first prove that the family of brackets $\{ \lr{\dots}_{p} \}_{p \in \NN}$ defined by \eqref{eq:mncor} gives indeed a 
double $P_{\infty}$-algebra structure on the graded algebra $A$. 
In other words, we shall prove that this bracket satisfies the conditions of Definition \ref{definition:poinf}.
As explained in the first paragraph of the proof of Theorem \ref{theorem:main1}, we can assume without loss of generality 
that $\sum_{j=1}^{p} |a_{j}| +2-p= \sum_{j=1}^{p} |f_{j}|$ in \eqref{eq:mncor2}, else the identity \eqref{eq:mncor} trivially holds. 

We will first prove the antisymmetric condition \ref{item:dpainf1} given in Definition \ref{definition:poinf}, \textit{i.e.}
\begin{equation}
\label{eq:antiinf}
     (-1)^{\epsilon(\sigma,\bar{a})} \sigma \big(\lr{a_{\sigma(1)}, \dots, a_{\sigma(p)}}_{p}\big) = \operatorname{sgn}(\sigma) \lr{a_{1}, \dots, a_{p}}_{p},
\end{equation}     
for all homogeneous $a_{1}, \dots, a_{p} \in A$, where $\bar{a} = a_{1} \otimes \dots \otimes a_{p}$ and $\epsilon(\sigma,\bar{a})$ was defined in \eqref{eq:permeps2}.
Evaluating $f_{1}\otimes \dots \otimes f_{p}$ at both members of the previous equation, where $f_{1}, \dots, f_{p} \in A^{\#}$ are homogeneous, 
it is clear that \eqref{eq:antiinf} is equivalent to 
\begin{equation}
\label{eq:antiinf2}
\begin{split}
     (f_{1} \otimes \dots \otimes f_{p}) &\big(\sigma \lr{a_{\sigma(1)}, \dots, a_{\sigma(p)}}_{p}\big) 
     \\
     &= \operatorname{sgn}(\sigma) (-1)^{\epsilon(\sigma,\bar{a})} (f_{1} \otimes \dots \otimes f_{p}) \big(\lr{a_{1}, \dots, a_{p}}_{p}\big),
\end{split}
\end{equation} 
for all homogeneous $a_{1}, \dots, a_{p} \in A$ and $f_{1}, \dots, f_{p} \in A^{\#}$. 
Using \eqref{eq:unipermvecfun} on the left member as well as \eqref{eq:mncor} on each side, we obtain that \eqref{eq:antiinf2} is equivalent to 
\begin{equation}
\label{eq:antiinf3}
\begin{split}
     s_{f_{\sigma(1)}, \dots, f_{\sigma(p)}}^{a_{\sigma(1)}, \dots, a_{\sigma(p)}}  & (-1)^{\epsilon(\sigma,\bar{f})} \gan\big(m_{2p-1}(a_{\sigma(p)},tf_{\sigma(p)}, \dots,a_{\sigma(1)}),tf_{\sigma(1)}\big) 
     \\
     &= \operatorname{sgn}(\sigma) (-1)^{\epsilon(\sigma,\bar{a})} s_{f_{1}, \dots, f_{p}}^{a_{1}, \dots, a_{p}} \gan\big(m_{2p-1}(a_{p},tf_{p}, \dots,a_{1}),tf_{1}\big),
\end{split}
\end{equation} 
where $\bar{f} = f_{1} \otimes \dots \otimes f_{p}$. 
By the ultracyclicity property of $\gan$, the left member of the previous equation is precisely $\gan(m_{2p-1}(a_{p},tf_{p}, \dots,a_{1}),tf_{1})$ multiplied by
\begin{equation}
\label{eq:antiinf4}
   s_{f_{\sigma(1)}, \dots, f_{\sigma(p)}}^{a_{\sigma(1)}, \dots, a_{\sigma(p)}}  (-1)^{\epsilon(\sigma,\bar{f})} 
   (-1)^{\epsilon(\xan_{p}(\sigma),\overline{fa})},
\end{equation}
where $\overline{fa} = f_{1} \otimes a_{1} \otimes \dots \otimes f_{p} \otimes a_{p}$. 
Hence, comparing \eqref{eq:antiinf3} and \eqref{eq:antiinf4}, we see that \eqref{eq:antiinf} holds if and only if 
\begin{equation}
\label{eq:antiinf5}
\begin{split}
   s_{f_{\sigma(1)}, \dots, f_{\sigma(p)}}^{a_{\sigma(1)}, \dots, a_{\sigma(p)}}  = (-1)^{\epsilon(\sigma,\bar{f})} 
   (-1)^{\epsilon(\xan_{p}(\sigma),\overline{fa})} \operatorname{sgn}(\sigma) (-1)^{\epsilon(\sigma,\bar{a})} s_{f_{1}, \dots, f_{p}}^{a_{1}, \dots, a_{p}}.
\end{split}
\end{equation}
Replacing $s_{f_{1}, \dots, f_{p}}^{a_{1}, \dots, a_{p}}$ by its definition and considering the case where $\sigma$ is any transposition of two successive elements, it is easy but lengthy to show that the antisymmetric condition \eqref{eq:antiinf5} holds, which in turn implies that \eqref{eq:antiinf} holds, as was to be shown. 

We shall now prove the Leibniz identity given in Definition \ref{definition:poinf}, \ref{item:dpainf2} for a fixed $p \in \NN$.
In order to do so, let us consider the identity \eqref{eq:ainftyalgebra} of the $A_\infty$-algebra structure of $\partial_{-1}A$ for $n=2p$. 
Since the $A_{\infty}$-algebra structure on $\partial_{-1}A$ is essentially odd, \eqref{eq:ainftyalgebra} for $n=2p$ reduces to 
\eqref{eq:ainftyalgebraodd-ev}, which, evaluated at $a_{0}\otimes b_{0}\otimes tf_{1}\otimes a_{1}\otimes \dots \otimes tf_{p-1}\otimes a_{p-1}$, gives 
\begin{equation*}
\begin{split}
 &-(-1)^{|a_{0}|} a_{0}.m_{2p-1}(b_{0}, tf_{1}, a_{1}, \dots, tf_{p-1}, a_{p-1})
 \\
 +
 m_{2p-1}(a_{0} b_{0}, &tf_{1}, a_{1}, \dots, tf_{p-1}, a_{p-1}) 
 - m_{2p-1}(a_{0}, b_{0}.tf_{1}, a_{1}, \dots, tf_{p-1}, a_{p-1})=0,
\end{split}
\end{equation*}
where $a_{0}, b_{0}, a_{1}, \dots, a_{p-1} \in A$, and $f_{1}, \dots, f_{p-1}\in A^{\#}$ are homogeneous elements. 
Applying $\gan(-,tf_{p})$ to the previous equation, for an arbitrary homogeneous $f_{p}\in A^{\#}$, we get 
\begin{equation}
\label{eq:n-Leibniz-inicial}
\begin{split}
 &-(-1)^{|a_{0}|} \gan\big(a_{0}.m_{2p-1}(b_{0}, tf_{1}, a_{1}, \dots, tf_{p-1}, a_{p-1}),tf_{p}\big)
 \\
 &+
 \gan\big(m_{2p-1}(a_{0} b_{0}, tf_{1}, a_{1}, \dots, tf_{p-1}, a_{p-1}),tf_{p}\big)
 \\
 &- \gan\big(m_{2p-1}(a_{0}, b_{0}.tf_{1}, a_{1}, \dots, tf_{p-1}, a_{p-1}),tf_{p}\big)=0.
\end{split}
\end{equation}

By the cyclicity property of $\gan$, the identity $(tf_{p}).a_{0}=t(f_{p}.a_{0})$ as well as \eqref{eq:mncor}, 
we see that the first term in the left member of \eqref{eq:n-Leibniz-inicial} is 
\[     - s_{f_{p}a_{0},f_{p-1},\dots,f_{1}}^{a_{p-1},\dots,a_{1},b_{0}} (-1)^{\epsilon} (f_{p}.a_{0} \otimes f_{p-1} \otimes \dots \otimes f_{1})\big(\lr{a_{p-1},\dots,a_{1},b_{0}}_{p}\big),     \]
where 
\[     \epsilon = |a_{0}|(p+|b_{0}|+|f_{p}|+\sum\limits_{j=1}^{p-1}(|a_{j}|+|f_{j}|)).     \]
Taking into account the identity
\[
(f_{1}.a\otimes f_{2}\otimes \dots \otimes f_{m})(v_1\otimes \dots \otimes v_m)=(-1)^{|a|\sum\limits_{j=2}^{m}|f_{j}|}(f_{1}\otimes \dots \otimes f_{m})(a.v_1\otimes v_{2} \otimes \dots \otimes v_m),
\]
for all homogeneous $a \in A$, $v_{1}, \dots, v_{m} \in M$ and $f_{1}, \dots, f_{m} \in M^{\#}$, where $M$ is a graded $A$-bimodule, we conclude that the first term in the left member of \eqref{eq:n-Leibniz-inicial} is precisely 
\begin{equation}
\label{eq:primer-termino-leibniz}
- s_{f_{p}.a_{0},f_{p-1},\dots,f_{1}}^{a_{p-1},\dots,a_{1},b_{0}} (-1)^{|a_{0}|(p+|b_{0}|+|f_{p}|+\sum\limits_{j=1}^{p-1}|a_{j}|)}(f_{p} \otimes \dots \otimes f_{1})\big(a_{0}\lr{a_{p-1},\dots,a_{1},b_{0}}_{p}\big).
\end{equation}
On the other hand, using \eqref{eq:mncor}, we see that the second term of the left member of \eqref{eq:n-Leibniz-inicial} is precisely
\begin{equation}
 \label{eq:segundo-termino-leibniz}
 s_{f_{p},\dots,f_{1}}^{a_{p-1},\dots,a_{1},a_{0} b_{0}} (f_{p} \otimes \dots \otimes f_{1})\big(\lr{a_{p-1},\dots,a_{1},a_{0} b_{0}}_{p}\big).
\end{equation}
Similarly, by the identity $b_{0}.tf_{1}=(-1)^{|b_{0}|}t(b_0.f_{1})$ and \eqref{eq:mncor}, the third term of the left member of 
\eqref{eq:n-Leibniz-inicial} is 
\begin{equation}
\label{eq:tercer-termino-leibniz}
 - s_{f_{p},\dots,f_{2},b_{0}.f_{1}}^{a_{p-1},\dots,a_{0}} (-1)^{|b_{0}|(p+|b_{0}|+|f_{1}|+\sum\limits_{j=0}^{p-1}|a_{j}|)} (f_{p} \otimes \dots \otimes f_{1})\big(\lr{a_{p-1},\dots,a_{0}}_{p} b_{0}\big),
\end{equation}
where we have used that 
\[
(f_{1}\otimes \dots \otimes f_{m-1} \otimes a.f_{m})(v_1\otimes \dots \otimes v_m)=(-1)^{\epsilon'}(f_{1}\otimes \dots \otimes f_{m})(v_1\otimes \dots \otimes v_{m-1} \otimes v_{m}.a),
\]
for all homogeneous $a \in A$, $v_{1}, \dots, v_{m} \in M$ and $f_{1}, \dots, f_{m} \in M^{\#}$, where $M$ is a graded $A$-bimodule, and $\epsilon' = |a|(|f_{m}|+|\sum_{j=2}^{m}|v_{j}|)$. 

Replacing \eqref{eq:primer-termino-leibniz}, \eqref{eq:segundo-termino-leibniz} and \eqref{eq:tercer-termino-leibniz} into \eqref{eq:n-Leibniz-inicial} and comparing it with equation \eqref{eq:dpoisinfpoi}, we see that the latter holds if and only if 
\begin{equation}
\label{eq:leibinf}
\begin{split}
 s_{f_{p},\dots,f_{1}}^{a_{p-1},\dots,a_{1},a_{0} b_{0}}&=  s_{f_{p},\dots,f_{2},b_{0}.f_{1}}^{a_{p-1},\dots,a_{0}} (-1)^{|b_{0}|(p+|b_{0}|+|f_{1}|+\sum\limits_{j=0}^{p-1}|a_{j}|)}, 
 \\
 s_{f_{p},\dots,f_{1}}^{a_{p-1},\dots,a_{1},a_{0} b_{0}}&= (-1)^{|a_{0}|(|b_{0}|+|f_{p}|)} s_{f_{p}.a_{0},f_{p-1},\dots,f_{1}}^{a_{p-1},\dots,a_{1},b_{0}}.
\end{split}
\end{equation}
It is rather tedious but straightforward to check that our choice \eqref{eq:mncor2} satisfies the previous identities, so the Leibniz property is verified. 
 
\begin{remark}
As in Theorem \ref{theorem:main1}, assuming that $s_{f_{1},\dots,f_{p}}^{a_{1},\dots,a_{p}}$ is just a function of the degrees $|a_{1}|, \dots, |a_{p}|$ and $|f_{1}|, \dots, |f_{p}|$ (satisfying that $\sum_{j=1}^{p}|a_{j}| + |f_{j}| = p-2$ ($\operatorname{mod} 2$)), one can also show that our choice for $s_{f_{1},\dots,f_{p}}^{a_{1},\dots,a_{p}}$ is the unique solution of \eqref{eq:antiinf5} and \eqref{eq:leibinf}, up to a multiplicative constant $\pm 1$. 
This is again how we found such an involved expression. 
In fact, the uniqueness of such a solution (up to multiplicative constant) already holds if one considers \eqref{eq:antiinf5} for only cyclic permutations and \eqref{eq:leibinf}. 
\end{remark}

We will now prove \eqref{eq:dpoisinfjac} for $p \in \NN$. 
In order to do so, we consider \eqref{eq:ainftyalgebra} for $n = 2p-1$. 
Since the $A_{\infty}$-algebra structure on $\partial_{-1}A$ is essentially odd, it reduces to \eqref{eq:ainftyalgebraodd-odd}. 
Since $m_{2}$ is associative, the first term in the left member of \eqref{eq:ainftyalgebraodd-odd} vanishes, so it is equivalent to 
\begin{equation*}
\begin{split}
   \sum_{i=1}^{p} &\sum_{r=0}^{i-1} m_{2i-1} \circ \big(\mathrm{id}_{A}^{\otimes 2r} \otimes m_{2(p-i)+1} \otimes \mathrm{id}_{A}^{\otimes (2(i-1-r))}\big)  
   \\
   &+ \sum_{i=1}^{p} \sum_{r=0}^{i-2} m_{2i-1} \circ \big(\mathrm{id}_{A}^{\otimes (2r+1)} \otimes m_{2(p-i)+1} \otimes \mathrm{id}_{A}^{\otimes (2(i-1-r)-1)}\big)  = 0.
\end{split}
\end{equation*}
If we evaluate it at $a_{1} \otimes tf_{1} \otimes\dots \otimes a_{p-1}\otimes tf_{p-1} \otimes a_{p}$ and we apply $\gan(\place,tf_{p})$, for homogeneous $a_{1}, \dots, a_{p} \in A$ and $f_{1}, \dots, f_{p} \in A^{\#}$, it gives 
\begin{small}
\begin{equation}
\label{eq:ainftyalgebraodd-odd2}
\begin{split}
   &\sum_{i=1}^{p} \sum_{r=0}^{i-1} (-1)^{r+\sum\limits_{j=1}^{r}(|a_{j}|+|f_{j}|)} 
   \gan\Big(m_{2i-1} \big(a_{1}, tf_{1}, \dots, a_{r},tf_{r},  
   \\ 
   &\hskip 3cm m_{2(p-i)+1}(a_{r+1}, tf_{r+1}, \dots, a_{r+p-i+1}), tf_{r+p-i+1}, \dots,  a_{p}\big), tf_{p}\Big)
      \\
   &+ \sum_{i=1}^{p} \sum_{r=0}^{i-2} (-1)^{r+|a_{r+1}|+\sum\limits_{i=1}^{r}(|a_{i}|+|f_{i}|)} 
   \gan\Big(m_{2i-1} \big(a_{1}, tf_{1}, \dots, a_{r},tf_{r}, a_{r+1}, 
   \\
   &\hskip 3cm m_{2(p-i)+1}(tf_{r+1}, a_{r+2}, \dots, tf_{r+p-i+1}), a_{r+p-i+2}, \dots,  a_{p}\big), tf_{p}\Big) = 0.
\end{split}
\end{equation}
\end{small}

Using the cyclicity of $\gan$, the terms appearing in the first two lines of \eqref{eq:ainftyalgebraodd-odd2} can be rewritten as 
\begin{small}
\begin{equation}
\label{eq:ainftyalgebraodd-odd3-1}
\begin{split}
   &\sum_{i=1}^{p} \sum_{r=0}^{i-1} (-1)^{\alpha} \gan\Big(m_{2i-1} \big(m_{2(p-i)+1}(a_{r+1}, tf_{r+1}, \dots, a_{r+p-i+1}),
      \\
   & \hskip 1cm tf_{r+p-i+1}, \dots,  a_{p}, tf_{p}, a_{1}, tf_{1}, \dots, a_{r}\big),tf_{r}\Big),
\end{split}
\end{equation}
\end{small}
where $\alpha = (r+\sum_{j=1}^{r}(|a_{j}|+|f_{j}|))(p-r+\sum_{j=r+1}^{p}(|a_{j}|+|f_{j}|))$. 

Concerning the terms in the last two lines of \eqref{eq:ainftyalgebraodd-odd2}, we first use the cyclicity of $\gan$ to move $m_{2(p-i)+1}(tf_{r+1}, a_{r+2}, \dots, tf_{r+p-i+1})$ to the last argument of $\gan$. 
Then, we apply the super symmetry of $\gan$ to flip its two arguments, and then again the cyclicity of $\gan$. 
After these computations, the terms in the last two lines of the left member of \eqref{eq:ainftyalgebraodd-odd2} become
\begin{small}
\begin{equation}
\label{eq:ainftyalgebraodd-odd3-2}
\begin{split}
   &\sum_{i=1}^{p} \sum_{r=0}^{i-2} (-1)^{\beta} 
   \gan\Big(m_{2(p-i)+1} \big(m_{2i-1}(a_{r+p-i+2}, tf_{r+p-i+2}, \dots, a_{p},tf_{p},a_{1},tf_{1}, \dots, 
   \\
   & \hskip 2cm a_{r},tf_{r},a_{r+1}), tf_{r+1}, \dots,  a_{r+p-i+1}\big), tf_{r+p-i+1}\Big),
\end{split}
\end{equation}
\end{small}
where $\beta = (r+p-i+1+\sum_{j=1}^{r+p-i+1}(|a_{j}|+|f_{j}|))(i+r+1+\sum_{j=r+p-i+2}^{p}(|a_{j}|+|f_{j}|))$. 

Before proceeding further, we will provide the following useful result:
\begin{fact}
\label{fact:jacobi}
Let $a_{1}, \dots, a_{p} \in A$ and $f_{1}, \dots, f_{p} \in A^{\#}$ be homogeneous elements. 
Then, given any $i \in \{ 1, \dots, p\}$, 
\begin{equation}
\label{eq:relacion-lemma-tecnicon}
\begin{split}
  &{}^{i}\Box_{f_{1}, \dots, f_{p}}^{a_{1}, \dots, a_{p}} (f_{1} \otimes \dots \otimes f_{p})\big(\lr{a_{1}, \dots, a_{i-1},\lr{a_{i}, \dots, a_{p}}_{p-i+1} }_{i,L}\big)
  \\
  &=\gan\Big(m_{2i-1}\big(m_{2(p-i)+1}(a_{p},tf_{p}, \dots, a_{i+1},tf_{i+1},a_{i}),tf_{i},\dots, a_{2},tf_{2},a_{1}\big),tf_{1}\Big),
\end{split}
\end{equation}
where 
\begin{equation}
\label{eq:symbol-lemman}
\begin{split}
{}^{i}\Box_{f_{1}, \dots, f_{p}}^{a_{1}, \dots, a_{p}} =& (-1)^{(p+1)(i+1) + \sum\limits_{j=1}^{i-1} (i-j) |a_{j}| + \sum\limits_{j=i}^{p} (j-1) |a_{j}| + p |f_{1}| + \sum\limits_{j=2}^{p} (j-1) |f_{j}|} 
\\
&(-1)^{\sum\limits_{1\leq j<k <i} |a_{j}| |a_{k}|+\sum\limits_{i\leq j<k \leq p} |a_{j}| |a_{k}| + \sum\limits_{1< j< k \leq p} |f_{j}| |f_{k}| + |f_{1}| 
\sum\limits_{j = i}^{p} |a_{j}|}
\\
&(-1)^{\sum\limits_{1< j \leq k <i} |a_{k}| |f_{j}|+\sum\limits_{i< j \leq k <p} |a_{k}| |f_{j}| + \sum\limits_{j=i}^{p-1} \sum\limits_{k=i+1}^{p} |a_{j}| |f_{k}|}.
\end{split}
\end{equation}
\end{fact}
\begin{proof}
By \eqref{eq:mncor}, the right member of \eqref{eq:relacion-lemma-tecnicon} coincides with 
\begin{equation}
\label{eq:tec1}
{}^{i}\Box_{f_{1}, \dots, f_{p}}^{a_{1}, \dots, a_{p}} s_{f_{1}, \dots, f_{i}}^{a_{1}, \dots, a_{i-1},b_{i}}
(f_{1} \otimes \dots \otimes f_{i})\big(\lr{a_{1}, \dots, a_{i-1},b_{i}}_{i}\big)
\end{equation}
where $b_{i} = m_{2(p-i)+1}(a_{p},tf_{p}, \dots, a_{i+1},tf_{i+1},a_{i})$, 
whereas the left member of \eqref{eq:relacion-lemma-tecnicon} is by definition
\begin{equation}
\label{eq:tec2}
(-1)^{\sum\limits_{j=i+1}^{p}\sum\limits_{k=1}^{i}|f_{j}| |f_{k}|+\hskip -0.2cm\sum\limits_{i<j<k\leq p} \hskip -0.2cm |f_{j}| |f_{k}|}
(f_{1} \otimes \dots \otimes f_{i})\big(\lr{a_{1}, \dots, a_{i-1},c_{1}}_{i}\big)
\prod_{j=i+1}^{p} f_{j}(c_{j-i+1}),
\end{equation}
where $c_{1} \otimes \dots \otimes c_{p-i+1} = \lr{a_{i}, \dots, a_{p}}_{p-i+1}$. 
As a consequence, \eqref{eq:relacion-lemma-tecnicon} is tantamount to 
\begin{equation}
\label{eq:tec3}
\begin{split}
&{}^{i}\Box_{f_{1}, \dots, f_{p}}^{a_{1}, \dots, a_{p}} s_{f_{1}, \dots, f_{i}}^{a_{1}, \dots, a_{i-1},b_{i}}
\gan\big(m_{2(p-i)+1}(a_{p},tf_{p}, \dots, a_{i+1},tf_{i+1},a_{i}),tg_{i}\big) 
\\
&= (-1)^{z}
(g_{i} \otimes f_{i+1} \otimes \dots \otimes f_{p}) \big(\lr{a_{i}, \dots, a_{p}}_{p-i+1}\big),
\end{split}
\end{equation}
for all $g_{i} \in A^{\#}$ homogeneous of degree $i+1-p+|a_{i}|+\sum_{j=i+1}^{p}(|a_{j}| -|f_{j}|)$, 
where 
\[     z = \sum\limits_{j=i+1}^{p}\sum\limits_{k=1}^{i}|f_{j}| |f_{k}|+\big(i+1-p+|a_{i}|+\sum\limits_{j=i+1}^{p}(|a_{j}| -|f_{j}|)\big)\big(\sum\limits_{j=i+1}^{p}|f_{j}|\big).     \]
Using \eqref{eq:mncor} on the left member of \eqref{eq:tec3} we conclude that 
\[     {}^{i}\Box_{f_{1}, \dots, f_{p}}^{a_{1}, \dots, a_{p}} = (-1)^{z} s_{f_{1}, \dots, f_{i}}^{a_{1}, \dots, a_{i-1},b_{i}} 
s_{g_{i}, f_{i+1}, \dots, f_{p}}^{a_{i}, \dots, a_{p}}.     \] 
After using \eqref{eq:mncor2} in the previous identity and a lengthy but straightforward computation, the statement follows.
\end{proof}

Applying Fact \ref{fact:jacobi} to \eqref{eq:ainftyalgebraodd-odd3-1} we obtain that the first two lines in 
\eqref{eq:ainftyalgebraodd-odd2} give exactly 
\begin{equation}
\label{eq:ainftyalgebraodd-odd4-1}
\begin{split}
   &\sum_{i=1}^{p} \sum_{r=0}^{i-1} {}^{i}\Box_{f_{r}, \dots, f_{1},f_{p},\dots,f_{r+1}}^{a_{r}, \dots, a_{1},a_{p},\dots,a_{r+1}} (-1)^{\alpha} (f_{r} \otimes \dots \otimes f_{1} \otimes f_{p} \otimes\dots \otimes f_{r+1})
   \\
   &\big(\lr{a_{r}, \dots, a_{1}, a_{p}, \dots, a_{r+p-i+2},\lr{a_{r+p-i+1}, \dots, a_{i+1}}_{p-i+1}}_{i,L}\big),
\end{split}
\end{equation}
where $\alpha = (r+\sum_{j=1}^{r}(|a_{j}|+|f_{j}|))(p-r+\sum_{j=r+1}^{p}(|a_{j}|+|f_{j}|))$, 
whereas the same result applied to \eqref{eq:ainftyalgebraodd-odd3-2} tells us that the latter is precisely 
\begin{equation}
\label{eq:ainftyalgebraodd-odd4-2}
\begin{split}
   &\sum_{i=1}^{p} \sum_{r=0}^{i-2} {}^{i}\Box_{f_{r+p-i+1}, \dots, f_{1},f_{p},\dots,f_{r+p-i+2}}^{a_{r+p-i+1}, \dots, a_{1},a_{p},\dots,a_{r+p-i+2}} (f_{r+p-i+1} \otimes \dots \otimes f_{1} \otimes f_{p} \otimes\dots \otimes f_{r+p-i+2})
   \\
   &\big(\lr{a_{r+p-i+1}, \dots, a_{r+2}, \lr{ a_{r+1}, \dots, a_{1}, a_{p}, \dots, a_{r+p-i+2}}_{i} }_{p-i+1,L}\big) (-1)^{\beta},
\end{split}
\end{equation}
where $\beta = (r+p-i+1+\sum_{j=1}^{r+p-i+1}(|a_{j}|+|f_{j}|))(i+r+1+\sum_{j=r+p-i+2}^{p}(|a_{j}|+|f_{j}|))$. 

Let $\sigma \in \mathbb{S}_{p}$ be the unique cyclic permutation sending $1$ to $2$. 
Using \eqref{eq:unipermvecfun}, we see that \eqref{eq:ainftyalgebraodd-odd4-1} and \eqref{eq:ainftyalgebraodd-odd4-2} 
are equivalent to 
\begin{equation}
\label{eq:ainftyalgebraodd-odd5-1}
\begin{split}
   &\sum_{i=1}^{p} \sum_{r=0}^{i-1} {}^{i}\Box_{f_{r}, \dots, f_{1},f_{p},\dots,f_{r+1}}^{a_{r}, \dots, a_{1},a_{p},\dots,a_{r+1}} (-1)^{\alpha'} (f_{p} \otimes \dots \otimes f_{1})
   \\
   &\big(\sigma^{-r}\lr{a_{r}, \dots, a_{1}, a_{p}, \dots, a_{r+p-i+2},\lr{a_{r+p-i+1}, \dots, a_{i+1}}_{p-i+1}}_{i,L}\big),
\end{split}
\end{equation}
where 
\[     \alpha' = \big(r+\sum_{j=1}^{r}(|a_{j}|+|f_{j}|)\big)\big(p-r+\sum_{j=r+1}^{p}(|a_{j}|+|f_{j}|)\big) + \sum_{j=1}^{r}\sum_{k=r+1}^{p}|f_{j}||f_{k}|,     \]
and
\begin{equation}
\label{eq:ainftyalgebraodd-odd5-2}
\begin{split}
   &\sum_{i=1}^{p} \sum_{r=0}^{i-2} {}^{i}\Box_{f_{r+p-i+1}, \dots, f_{1},f_{p},\dots,f_{r+p-i+2}}^{a_{r+p-i+1}, \dots, a_{1},a_{p},\dots,a_{r+p-i+2}} (-1)^{\beta'} (f_{p} \otimes \dots \otimes f_{1})
   \\
   &\big(\sigma^{-(r+p-i+1)} \lr{a_{r+p-i+1}, \dots, a_{r+2}, \lr{ a_{r+1}, \dots, a_{1}, a_{p}, \dots, a_{r+p-i+2}}_{i} }_{p-i+1,L}\big),
\end{split}
\end{equation}
where 
\begin{equation*}
\begin{split}
     \beta' &= \big(r+p-i+1+\sum_{j=1}^{r+p-i+1}(|a_{j}|+|f_{j}|)\big)\big(i+r+1
     \\ 
     &+ \sum_{j=r+p-i+2}^{p}(|a_{j}|+|f_{j}|)\big)  + \sum_{j=1}^{r+p-i+1}\sum_{k=r+p-i+2}^{p}|f_{j}||f_{k}|,      
\end{split}
\end{equation*}
respectively.
Furthermore, if we reindex \eqref{eq:ainftyalgebraodd-odd5-2} by setting $i'=p-i+1$ and $r'=r+p-i+1$, the former becomes 
\begin{equation}
\label{eq:ainftyalgebraodd-odd5-2b}
\begin{split}
   &\sum_{i'=1}^{p} \sum_{r'=i'}^{p-1} {}^{i}\Box_{f_{r'}, \dots, f_{1},f_{p},\dots,f_{r'+1}}^{a_{r'}, \dots, a_{1},a_{p},\dots,a_{r'+1}} (-1)^{\beta''} (f_{p} \otimes \dots \otimes f_{1})
   \\
   &\big(\sigma^{-r'} \lr{a_{r'}, \dots, a_{r'-i'+2}, \lr{ a_{r'-i'+1}, \dots, a_{1}, a_{p}, \dots, a_{r'+1}}_{p-i'+1} }_{i',L}\big),
\end{split}
\end{equation}
where 
\[     \beta'' = \big(r'+\sum_{j=1}^{r'}(|a_{j}|+|f_{j}|)\big)\big(r'+p+\sum_{j=r'+1}^{p}(|a_{j}|+|f_{j}|)\big) + \sum_{j=1}^{r'}\sum_{k=r'+1}^{p} |f_{j}||f_{k}|.     \] 

On the other hand, after a tedious but straightforward calculation, we see that, for all $i \in \{ 1, \dots, p\}$ and $r \in \{ 0, \dots, i-1\}$,  
\begin{equation}
\label{eq:ainftyalgebraodd-odd6-1}
\begin{split}
   {}^{i}&\Box_{f_{r}, \dots, f_{1},f_{p},\dots,f_{r+1}}^{a_{r}, \dots, a_{1},a_{p},\dots,a_{r+1}} (-1)^{\alpha''}
   \\
   &= (-1)^{(p+1)(i+r) + \sum\limits_{j=1}^{r} (i-j) |a_{j}| + \sum\limits_{j=r+1}^{r+p-i+1} (j-1) |a_{j}| + \sum\limits_{j=r+p-i+2}^{p} (i-j) |a_{j}| } 
   \\
   &\hskip 0.48cm (-1)^{\sum\limits_{j=1}^{p} (j-1) |f_{j}|+\sum\limits_{1\leq j< k \leq p} |a_{j}| |a_{k}|+\sum\limits_{j=1}^{r+p-i+1}\sum\limits_{k=r+p-i+2}^{p} |a_{j}| |a_{k}|}
\\
&\hskip 0.48cm (-1)^{\sum\limits_{1\leq j< k \leq p} |f_{j}| |f_{k}| + \sum\limits_{1\leq j \leq k \leq p} |a_{j}| |f_{k}|+\sum\limits_{j=r+1}^{r+p-i+1} \sum\limits_{k=1}^{p} |a_{j}| |f_{k}|},
\end{split}
\end{equation}  
and for all $i \in \{ 1, \dots, p\}$ and $r \in \{ i, \dots, p-1\}$
\begin{equation}
\label{eq:ainftyalgebraodd-odd6-2}
\begin{split}
   {}^{i}&\Box_{f_{r}, \dots, f_{1},f_{p},\dots,f_{r+1}}^{a_{r}, \dots, a_{1},a_{p},\dots,a_{r+1}} (-1)^{\beta''}
   \\
   &= (-1)^{(p+1)(i+r) + \sum\limits_{j=1}^{r-i+1} (j-1) |a_{j}| + \sum\limits_{j=r-i+2}^{r} (i-j) |a_{j}| + \sum\limits_{j=r+1}^{p} (j-1) |a_{j}| } 
   \\
   &\hskip 0.48cm (-1)^{\sum\limits_{j=1}^{p} (j-1) |f_{j}|+\sum\limits_{1\leq j< k \leq p} |a_{j}| |a_{k}|+\sum\limits_{j=1}^{r-i+1}\sum\limits_{k=r-i+2}^{p} |a_{j}| |a_{k}|}
\\
&\hskip 0.48cm (-1)^{\sum\limits_{1\leq j< k \leq p} |f_{j}| |f_{k}| + \sum\limits_{1\leq j \leq k \leq p} |a_{j}| |f_{k}|+(\sum\limits_{j=1}^{r-i+1} |a_{j}| + \sum\limits_{j=r+1}^{p} |a_{j}|)(\sum\limits_{k=1}^{p} |f_{k}|)}.
\end{split}
\end{equation}  
The Koszul sign rule tells us that, for $i \in \{ 1, \dots, p\}$ and $r \in \{ 0, \dots, i-1\}$, 
\begin{equation}
\label{eq:ainftyalgebraodd-odd7-1}
\begin{split}
&\lr{a_{r}, \dots, a_{1}, a_{p}, \dots, a_{r+p-i+2},\lr{a_{r+p-i+1}, \dots, a_{i+1}}_{p-i+1}}_{i,L}
\\ 
&= (-1)^{\bar{\alpha}} \big(\lr{\dots}_{i,p-i+1} \circ \sigma^{r}\big) (a_{p} \otimes \dots \otimes a_{1}),
\end{split}
\end{equation} 
where
\[     \bar{\alpha} = (p-i-1)\big(\sum\limits_{j=1}^{r} |a_{j}| + \sum\limits_{j=r+p-i+2}^{p} |a_{j}|\big) + \sum\limits_{j=1}^{r}\sum\limits_{k=r+1}^{p} |a_{j}| |a_{k}|,     \]
whereas, for $i \in \{ 1, \dots, p\}$ and $r \in \{ i, \dots, p-1\}$, we have that 
\begin{equation}
\label{eq:ainftyalgebraodd-odd7-2}
\begin{split}
&\lr{a_{r}, \dots, a_{r-i+2}, \lr{ a_{r-i+1}, \dots, a_{1}, a_{p}, \dots, a_{r+1}}_{p-i+1} }_{i,L}
\\ 
&= (-1)^{\bar{\beta}} \big(\lr{\dots}_{i,p-i+1} \circ \sigma^{r}\big) (a_{p} \otimes \dots \otimes a_{1}),
\end{split}
\end{equation} 
where
\[     \bar{\beta} = (p-i-1)\big(\sum\limits_{j=r-i+2}^{r} |a_{j}|\big) + \sum\limits_{j=1}^{r}\sum\limits_{k=r+1}^{p} |a_{j}| |a_{k}|.     \]
Using $\sum_{j=1}^{p}|f_{j}| = p-1+  \sum_{j=1}^{p}|a_{j}|$ ($\operatorname{mod} 2$) in the last term of the right member of 
\eqref{eq:ainftyalgebraodd-odd6-1} and utilizing this result together with \eqref{eq:ainftyalgebraodd-odd7-1} in \eqref{eq:ainftyalgebraodd-odd5-1}, we see that the latter is equivalent to
\begin{equation}
\label{eq:ainftyalgebraodd-odd8-1}
   \sum_{i=1}^{p} \sum_{r=0}^{i-1} (-1)^{(p+1)(i+r)+\hat{\alpha}} (f_{p} \otimes \dots \otimes f_{1})
\Big(\big(\sigma^{-r}\circ \lr{\dots}_{i,p-i+1} \circ \sigma^{r}\big) (a_{p} \otimes \dots \otimes a_{1})\Big),
\end{equation}
where $\hat{\alpha}$ is given by 
\[      \sum\limits_{j=1}^{p} (p-j-1) |a_{j}|+ \sum\limits_{j=1}^{p} (j-1) |f_{j}|+\sum\limits_{1\leq j< k \leq p} |a_{j}| |a_{k}|+ \sum\limits_{1\leq j< k \leq p} |f_{j}| |f_{k}| + \sum\limits_{1\leq j \leq k \leq p} |a_{j}| |f_{k}|.     \]
The precise same argument but involving instead \eqref{eq:ainftyalgebraodd-odd7-2} and \eqref{eq:ainftyalgebraodd-odd6-2} in \eqref{eq:ainftyalgebraodd-odd5-2b} yields that the latter is tantamount to 
\begin{equation}
\label{eq:ainftyalgebraodd-odd8-2}
   \sum_{i=1}^{p} \sum_{r=i}^{p-1} (-1)^{(p+1)(i+r)+\hat{\alpha}} (f_{p} \otimes \dots \otimes f_{1})
\Big(\big(\sigma^{-r}\circ \lr{\dots}_{i,p-i+1} \circ \sigma^{r}\big) (a_{p} \otimes \dots \otimes a_{1})\Big).
\end{equation}
As a consequence, \eqref{eq:ainftyalgebraodd-odd2} is exactly 
\begin{equation}
\label{eq:ainftyalgebraodd-odd9}
   (-1)^{\hat{\alpha}} \sum_{i=1}^{p} \sum_{r=i}^{p-1} (-1)^{(p+1)(i+r)} (f_{p} \otimes \dots \otimes f_{1})
\Big(\big(\sigma^{-r}\circ \lr{\dots}_{i,p-i+1} \circ \sigma^{r}\big) (a_{p} \otimes \dots \otimes a_{1})\Big).
\end{equation}
Since $\operatorname{sgn}(\sigma) = (-1)^{p+1}$, we obtain precisely \eqref{eq:dpoisinfjac}, as was to be shown. 

We will show that \eqref{eq:bij2} is bijective. 
Note first that, given any good and manageable $d$-pre-Calabi-Yau structure $\{m_{\bullet}\}_{\bullet \in \NN}$ on $A$, it is uniquely determined by $m_{2q+1}|_{(A \otimes A^{\#}[-1])^{\otimes q} \otimes A}$, for all $q \in \NN_{0}$. 
Indeed, the fact that the pre-Calabi-Yau structure on $A$ is good tells us that the full $m_{2q+1}$ on $\partial_{-1}A$ 
is unique, and the manageability hypothesis implies that $m_{2}$ is uniquely determined by the 
algebra structure of $A$. 
As a consequence, and using that the identity \eqref{eq:mncor} implies that the corresponding double bracket $\lr{\dots}_{q+1}$ completely determines $m_{2q+1}|_{(A \otimes A^{\#}[-1])^{\otimes q} \otimes A}$, we conclude that \eqref{eq:bij2} is injective. 

We will finally show that \eqref{eq:bij2} is surjective. 
It suffices to prove that, given any collection of good morphisms $m_{2q+1} : \partial_{-1}A^{\otimes (2q+1)} \rightarrow \partial_{-1}A$ of degree $1-2q$ for $q \in \NN_{0}$ on the graded algebra $\partial_{-1}A$, whose product is denoted by $m_{2}$, satisfying the cyclic identities \eqref{eq:ip1}, for the natural bilinear form $\gan$ of degree $- 1$, then the vanishing of 
$\operatorname{SI}(2p)_{\gan}|_{A\otimes (A \otimes A^{\#}[-1])^{\otimes p}}$ is equivalent to 
$\operatorname{SI}(2p)_{\gan} = 0$, and 
$\operatorname{SI}(2p-1)_{\gan}|_{(A \otimes A^{\#}[-1])^{\otimes p}}=0$ 
is tantamount to the vanishing of $\operatorname{SI}(2p-1)_{\gan}$, for all $p \in \NN$. 
We leave to the reader the tedious but straightforward verification that the vanishing of $\operatorname{SI}(2p)_{\gan}|_{A\otimes (A \otimes A^{\#}[-1])^{\otimes p}}$ and that of $\operatorname{SI}(2p)_{\gan}|_{\sigma(A\otimes (A \otimes A^{\#}[-1])^{\otimes p})}$ are equivalent, for any $\sigma \in C_{2p+1}$, whereas $\operatorname{SI}(2p)_{\gan}|_{\sigma(A\otimes (A \otimes A^{\#}[-1])^{\otimes p})}$ trivially vanishes if $\sigma \in \mathbb{S}_{2p+1} \setminus C_{2p+1}$.
Similarly, it is long but easy to verify that 
$\operatorname{SI}(2p-1)_{\gan}|_{(A \otimes A^{\#}[-1])^{\otimes p}} = 0$ is equivalent to $\operatorname{SI}(2p-1)_{\gan}|_{\sigma((A \otimes A^{\#}[-1])^{\otimes p})} = 0$, for any cyclic permutation $\sigma \in C_{2p} \subseteq \mathbb{S}_{2p}$, and $\operatorname{SI}(2p-1)_{\gan}|_{\sigma((A \otimes A^{\#}[-1])^{\otimes p})}$ is trivially zero if $\sigma \in \mathbb{S}_{2p} \setminus C_{2p}$.
This concludes the proof of the theorem. 
\end{proof}

\bibliographystyle{model1-num-names}
\addcontentsline{toc}{section}{References}

\begin{bibdiv}
\begin{biblist}

\bib{CBEG}{article}{
   AUTHOR = {Crawley-Boevey, William},
    AUTHOR = {Etingof, Pavel},
    AUTHOR = {Ginzburg, Victor},
     TITLE = {Noncommutative geometry and quiver algebras},
    VOLUME = {209},
      YEAR = {2007},
    NUMBER = {1},
    PAGES = {274--336},
%
}

\bib{I17}{article}{
   author={Iyudu, Natalia},
   title={Examples of pre-CY structures, associated operads and cohomologies},
   pages={12},
   date={2017},
   eprint={http://preprints.ihes.fr/2017/M/M-17-02.pdf},
}

\bib{IK17}{article}{
   author={Iyudu, Natalia},
   author={Kontsevich, Maxim},
   title={Pre-{C}alabi-{Y}au algebras as noncommutative {P}oisson structures},
   pages={16},
   date={2018},
   eprint={http://preprints.ihes.fr/2018/M/M-18-04.pdf},
}

\bib{KPS17}{article}{
    AUTHOR = {Katzarkov, Ludmil},
    AUTHOR = {Pandit, Pranav},
AUTHOR = {Spaide, Theodore},
     TITLE = {Calabi-{Y}au structures, spherical functors, and shifted symplectic structures},
   pages={60},
   date={2017},
   eprint={https://arxiv.org/abs/1701.07789},
}

\bib{Ko13}{article}{
   author={Kontsevich, Maxim},
   title={Weak {C}alabi-{Y}au algebras},
   pages={},
   date={2013},
   eprint={https://math.berkeley.edu/~auroux/miami2013-notes},
   note={Notes taken from the talk at \emph{Conference on Homological Mirror Symmetry}}
}

\bib{KoRo}{article}{
    AUTHOR = {Kontsevich, Maxim and Rosenberg, Alexander L.},
     TITLE = {Noncommutative smooth spaces},
 BOOKTITLE = {The {G}elfand {M}athematical {S}eminars, 1996--1999},
    SERIES = {Gelfand Math. Sem.},
     PAGES = {85--108},
 PUBLISHER = {Birkh\"{a}user Boston, Boston, MA},
      YEAR = {2000},
%
}

\bib{KoSo09}{article}{
    AUTHOR = {Kontsevich, Maxim},
        AUTHOR = {Soibelman, Yan},
     TITLE = {Notes on {$A_\infty$}-algebras, {$A_\infty$}-categories and
              non-commutative geometry},
 BOOKTITLE = {Homological mirror symmetry},
    SERIES = {Lecture Notes in Phys.},
    VOLUME = {757},
     PAGES = {153--219},
 PUBLISHER = {Springer, Berlin},
      YEAR = {2009},
}

\bib{KV18}{article}{
   author={Kontsevich, Maxim},
   author={Vlassopoulos, Yiannis},
   title={Pre-Calabi-Yau algebras and topological quantum field theories},
   pages={76},
   date={2018},
   note={Preprint},
}

\bib{ORS13}{article}{
    AUTHOR = {Odesskii, Alexander},
    AUTHOR = {Rubtsov, Vladimir},
    AUTHOR = {Sokolov,Vladimir},
     TITLE = {Double {P}oisson brackets on free associative algebras},
 BOOKTITLE = {Noncommutative birational geometry, representations and
              combinatorics},
    SERIES = {Contemp. Math.},
    VOLUME = {592},
     PAGES = {225--239},
 PUBLISHER = {Amer. Math. Soc., Providence, RI},
      YEAR = {2013},
}
\bib{PVdW08}{article}{
    AUTHOR = {Pichereau, Anne},
    AUTHOR = {Van de Weyer, Geert},
     TITLE = {Double {P}oisson cohomology of path algebras of quivers},
   JOURNAL = {J. Algebra},
    VOLUME = {319},
      YEAR = {2008},
    NUMBER = {5},
     PAGES = {2166--2208},
}

\bib{TZ16}{article}{
    AUTHOR = {Tradler, Thomas},
    AUTHOR = {Zeinalian, Mahmoud},
     TITLE = {Algebraic string operations},
   JOURNAL = {$K$-Theory},
    VOLUME = {38},
      YEAR = {2007},
    NUMBER = {1},
     PAGES = {59--82},
      ISSN = {0920-3036},
}

\bib{Sch09}{article}{
   author={Schedler, Travis},
   title={Poisson algebras and Yang-Baxter equations},
   conference={
      title={Advances in quantum computation},
   },
   book={
      series={Contemp. Math.},
      volume={482},
      publisher={Amer. Math. Soc., Providence, RI},
   },
   date={2009},
   pages={91--106},
}

\bib{Sei12}{article}{
    AUTHOR = {Seidel, Paul},
     TITLE = {Fukaya {$A_\infty$}-structures associated to {L}efschetz
              fibrations. {I}},
   JOURNAL = {J. Symplectic Geom.},
    VOLUME = {10},
      YEAR = {2012},
    NUMBER = {3},
     PAGES = {325--388},
}

\bib{Sei17}{article}{
    AUTHOR = {Seidel, Paul},
     TITLE = {Fukaya {$A_\infty$}-structures associated to {L}efschetz
              fibrations. {II}},
 BOOKTITLE = {Algebra, geometry, and physics in the 21st century},
    SERIES = {Progr. Math.},
    VOLUME = {324},
     PAGES = {295--364},
 PUBLISHER = {Birkh\"{a}user/Springer, Cham},
      YEAR = {2017},
}

\bib{vdB}{article}{
   author={Van den Bergh, Michel},
   title={Double Poisson algebras},
   journal={Trans. Amer. Math. Soc.},
   volume={360},
   date={2008},
   number={11},
   pages={5711--5769},
}

\bib{Ye18}{article}{
   author={Yeung, Wai-Kit},
   title={Weak Calabi-Yau structures and moduli of representations},
   pages={74},
   date={2018},
     eprint={https://arxiv.org/abs/1802.05398},
}

\end{biblist}
\end{bibdiv}

\vskip 1cm
\noindent \scshape{David Fern\'andez: Fakult\"at f\"ur Mathematik, Universit\"at Bielefeld, 33501 Bielefeld, Germany.}
\\
\normalfont\textit{E-mail address}: \href{mailto:david.fernandez@math.uni-bielefeld.de}{\texttt{david.fernandez@math.uni-bielefeld.de}}.

\medskip
\noindent \scshape{Estanislao Herscovich: Institut Fourier, Universit\'e Grenoble Alpes, 38610 Gi\`eres, France.}
\\
\normalfont\textit{E-mail address}: \href{mailto:Estanislao.Herscovich@univ-grenoble-alpes.fr}{\texttt{Estanislao.Herscovich@univ-grenoble-alpes.fr}}.

\end{document}